\numberwithin{equation}{section}
\setlist{  
  listparindent=\parindent,
  parsep=0pt,
}
\newcolumntype{C}[1]{>{\Centering}m{#1}}
\newcommand*{\algrule}[1][\algorithmicindent]{%
  \makebox[#1][l]{%
    \hspace*{.2em}
    \vrule height .75\baselineskip depth .25\baselineskip
  }
}
\def\ALG@printindent{%
    \ifnum \theALG@nested>0
    \ifx\ALG@text\ALG@x@notext
    \else
    \unskip
    \ALG@printindent@tempcnta=1
    \loop
    \algrule[\csname ALG@ind@\the\ALG@printindent@tempcnta\endcsname]%
    \advance \ALG@printindent@tempcnta 1
    \ifnum \ALG@printindent@tempcnta<\numexpr\theALG@nested+1\relax
    \repeat
    \fi
    \fi
}
\patchcmd{\ALG@doentity}{\noindent\hskip\ALG@tlm}{\ALG@printindent}{}{\errmessage{failed to patch}}
\patchcmd{\ALG@doentity}{\item[]\nointerlineskip}{}{}{} 
\newcommand{\pdual}[1]{\left\langle#1\right\rangle}
\newcommand{\jump}[1]{ [ \! [ {#1} ] \! ] }
\newcommand{\wtil}[1]{\widetilde{#1}}
\newcommand{\what}[1]{\widehat{#1}}
\newcommand{\kap}{\kappa}
\newcommand{\g}{\overline{\mathrm{g}}}
\newcommand{\compD}{\Omega_h}
\newcommand{\erroru}{\varepsilon^u}
\newcommand{\errorq}{\boldsymbol{\varepsilon}^{\boldsymbol{q}}}
\newcommand{\errortu}{\varepsilon^{\widehat{u}}}
\newcommand{\errortq}{\boldsymbol{\varepsilon}^{\widehat{\boldsymbol{q}}}}
\newcommand{\projerrorq}{\Lambda_{\boldsymbol{q}}}
\newcommand{\projerroru}{\Lambda_{u}}
\newcommand{\varphiJ}{\varphi_{\boldsymbol{q}}}
\newcommand{\aposteerror}{\textrm{e}_h}
\newcommand{\triple}[1]{|\!|\!|{#1}|\!|\!|}
\newcommand{\mb}[1]{\mathbb{#1}}
\newcommand{\mc}[1]{\mathcal{#1}}
\newcommand{\md}[1]{\mathds{#1}}
\newcommand{\mbf}[1]{\boldsymbol{#1}}
\newcommand{\bsy}[1]{\boldsymbol{#1}}
\newtheorem{thm}{Theorem}
\newtheorem{lem}{Lemma}
\newtheorem{crl}{Corollary}
\newcommand{\cO}{\mathcal{O}}
\newcommand\rmH{\mathrm{H}}
\newcommand{\bL}{\boldsymbol{L}}
\newcommand\bH{\boldsymbol{H}}
\title{{\it A priori} and {\it a posteriori} error analysis of an unfitted HDG method for semi-linear elliptic problems}
\author[1,2]{Nestor S\'anchez }
\author[3,4]{Tonatiuh S\'anchez-Vizuet}
\author[1,2]{Manuel E. Solano}
\affil[1]{{\footnotesize Departamento de Ingenier\'ia Matem\'atica, Facultad de Ciencias F\'isicas y Matem\'aticas, Universidad de Concepci\'on, Concepci\'on, Chile.}}
\affil[2]{{\footnotesize Centro de Investigaci\'on 
en Ingenier\'ia Matem\'atica (CI$^2$MA), Universidad de Concepci\'on, Concepci\'on, Chile.}}
\affil[3]{{\footnotesize New York University, Courant Institute of Mathematical Sciences, New York, USA.}}
\affil[4]{{\footnotesize Department of Mathematics, The University of Arizona. Tucson, AZ, USA.}}
\begin{document}
\date{}
\maketitle

\begin{abstract}
We present {\it a priori} and {\it a posteriori} error analysis of a high order hybridizable discontinuous Galerkin (HDG) method applied to a semi-linear elliptic problem posed on a piecewise curved, non polygonal domain. We approximate $\Omega$ by a polygonal subdomain $\Omega_h$ and propose an HDG discretization, which is shown to be optimal under mild assumptions related to the non-linear source term and the distance between the boundaries of the polygonal subdomain $\Omega_h$ and the true domain $\Omega$. Moreover, a local non-linear post-processing of the scalar unknown is proposed and shown to provide an additional order of convergence. A reliable and locally efficient {\it a posteriori} error estimator that takes into account the error in the approximation of the boundary data of $\Omega_h$ is also provided.
\end{abstract}
{\bf Key words}: Hybridizable discontinuous Galerkin (HDG), curved boundary, semi-linear elliptic equations, a posteriori error estimates.

\noindent
{\bf Mathematics Subject Classifications (2010)}: 65N30, 65N15.

\section{Introduction}
%
In this paper, we carry out {\it a priori} and  {\it a posteriori} error analyses of a  hybridizable discontinuous Galerkin  (HDG) method \cite{Cockburn2010} applied to semi-linear elliptic problems of the form
	\begin{subequations}\label{eq:main}
	\begin{align}
	&& -\nabla \cdot \left( \kap \nabla u \right) &= \mc{F}(u) & &\text{ in } \Omega, && \\
	&& u &= g  & &\text{ on } \Gamma:=\partial \Omega, &&
	\end{align}
	\end{subequations}
where the domain $\Omega \subset \mathbb{R}^d$ ($d=2,3$) is not necessarily polygonal/polyhedral,  $\kap$ is a positive function in $\Omega$, $\mc{F}$ is a source term that depends on the solution $u$ and $g$ is the Dirichlet boundary data on $\Gamma$. To avoid the trivial solution, we will assume that if the boundary conditions are homogeneous, the source term will not vanish for $u=0$.

The authors's original motivation to study this type of problems comes from an application to plasma physics, where the magnetic equilibrium in axisymmetric fusion reactors can be described in terms of the solution of an equation of this type, known in the literature as the the \textit{Grad-Shafranov} equation \cite{GrRu1958,Shafranov1958}. Due to the symmetry of the device, the equation is posed in a two-dimensional domain, corresponding to a cross section of the reactor at a constant toroidal angle. The plasma confinement region is the domain enclosed by the zero level set of the solution, which is a piecewise smooth curve that in theoretical studies is often considered given and does not contain the vertical axis \cite{SaSo2018}. In fact, the Grad-Shafranov equation is nothing but \eqref{eq:main} where $g=0$, $\kap(x,y) = 1/x$ and the source term is a case-dependent function related to the current density in the toroidal direction and the pressure profile in the plasma. Note that in plasma applications, the domain $\Omega$ does not include the $y$ axis.

One of the first analysis of Galerkin methods for semi-linear elliptic problems can be traced back to the 80's where optimal $L^2$ error estimates were provided \cite{HaLa1987}. In the late nineties efficient methods in the spirit of multigrid methods were developed \cite{Xu1994,Xu1996}.  Since then, a vast literature can be found specially for parabolic problems, most of them focused on improving the computational efficiency of the numerical schemes, due to the fact that the non-linear source term requires the implementation of iterative schemes that might involve the computation of the Jacobian in the case of Newton's method. To circumvent this drawback, during the last decade, new methods have been proposed. For instance, adaptive iterative schemes \cite{Am2019,AmWi2015,PaWi2020,
HoWi2020} and interpolatory methods \cite{CoSiZh,XiCh2005}. In the context of discontinuos Galerkin method, we can mention  \cite{CoSiZh} and \cite{ZhZhJi}, whereas recently a virtual finite element method has been analyzed \cite{AsNaNa2019}. We emphasize that all aforementioned references consider exclusively polygonal/polyhedral domains. To the best of our knowledge, there are no works on {\it a posteriori} error analysis for semilinear problems posed in domains with piece-wise smooth boundary. We consider precisely this to be the main contribution of our work.

In the present study, the source term $\mc{F}$ will be assumed to be Lipschitz-continuous in $\Omega$, i.e, there exists  $L_{\Omega}>0$ such that
	\begin{equation}\label{eq:Lipschitz}
	\|\mc{F}(u_1) - \mc{F}(u_2)\|_{\Omega} \leq L_{\Omega}  \|u_1-u_2\|_{\Omega} \qquad \forall \, u_1,u_2 \in L^2(\Omega).
	\end{equation}
In addition, we assume that there exist positive constants $\underline{\kap}$ and $\overline{\kap}$ such that
\begin{eqnarray*}
    \underline{\kap} \leq \kap(\boldsymbol{x}) \leq\overline{\kap} \quad \forall \, \boldsymbol{x} \in \Omega.
    \end{eqnarray*}	
An HDG discretization requires us to formulate the problem in mixed from through the introduction of the flux $\mbf{q}:= -\kap\nabla u$ as an additional unknown. This choice makes it possible to write \eqref{eq:main} as the equivalent first order system 
	\begin{subequations}\label{eq:mixed}
	\begin{align}
	&& && \mbf{q} + \kap \nabla u &= 0 & &\text{ in } \Omega, &&\label{eq:mixed_a}\\
	&& && \nabla \cdot \mbf{q}  &= \mc{F}(u) & &\text{ in }  \Omega, &&\label{eq:mixed_b}\\
	&& && u &= g &  &\text{ on } \partial \Omega .&&\label{eq:mixed_c}
	\end{align}
	\end{subequations}
HDG schemes, as many other discretization methods, are based on a triangulation of the domain. In our case, $\Omega$ has a piecewise curved boundary which complicates the use high order methods, since the boundary must be properly interpolated by ``curved'' triangles or tetrahedra in order to preserve high order convergence. An alternative is to approximate $\Omega$ by a polygonal/polyhedral subdomain $\compD\subset\Omega$, that can be easily discretized by a uniform triangulation of size $h>0$. Then, the system \eqref{eq:mixed} can be restricted to $\compD$:
	\begin{subequations}\label{eq: Grad-Shaf. - comput domain}
	\begin{align}
	&& && \mbf{q} +\kappa \nabla u &= 0 & &\text{ in } \compD, \label{eq: GS - comput domain_a}\\
	&& && \nabla \cdot \mbf{q}  &= \mc{F}(u) & &\text{ in }   \compD, \label{eq: GS - comput domain_b} \\
	&& && u &= \varphi & &\text{ on } \Gamma_h:= \partial  \compD, \label{eq: GS - comput domain_c}
	\end{align}
	\end{subequations}
where the unknown $\varphi$ is the Dirichlet  data on the computational boundary  $\Gamma_h$. A clever way to determine $\varphi$ was proposed for one dimension in \cite{CoReGu2009} and then extended to higher dimensions by \cite{CoSo2012}. The method consists of using the definition of the flux to transfer the Dirichlet data from $\Gamma$ to $\Gamma_h$ along segments called \textit{transferring paths}. In fact, given $\boldsymbol{x}\in \Gamma_h$ and $\overline{\boldsymbol{x}} \in \Gamma$, one can integrate \eqref{eq:mixed_a} along a segment of length $l(\mbf{x})$ with unit tangent vector $\boldsymbol{t}(\mbf{x})$ connecting them to obtain the following representation for $\varphi$:
	\begin{equation}\label{def: varphi}
	\varphi(\mbf{x})= g(\overline{\mbf{x}})+ \int_0^{l(\mbf{x})} (\kap^{-1} \,\mbf{q})(\mbf{x} + \boldsymbol{t}(\mbf{x})s) \cdot  \boldsymbol{t}(\mbf{x}) ds.
	\end{equation}
 Above, we have considered that $u(\overline{\mbf{x}}) = g(\overline{\mbf{x}})$. At the end of Section \ref{sec:Notation} we will describe a way to pick $\overline{\boldsymbol{x}}$ in such a way that the transfer will preserve the order of approximation of the underlying discretization. Notice that the assumption \eqref{eq:Lipschitz} implies that $\mc{F}$ is also Lipschitz continuous in $\compD$ with constant $L\leq L_\Omega$; this observation will be useful in the analysis to follow.

In previous works the authors had applied this transfer technique in combination with an iterative HDG discretization to deal with the nonlinear system \eqref{eq: Grad-Shaf. - comput domain} arising from the Grad-Shafranov equation \cite{SaSo2018} and explored an h-adaptive HDG scheme for the solution of the problem \cite{SaCeSo2019}. The adaptive strategy was powered by a residual-based error estimator first proposed by Cockburn and Zhang \cite{aposteriori1}, albeit for polygonal domains---therefore not requiring the transfer of the boundary data--- and linear problems. The goal of this work is to provide a rigorous justification for the numerical results obtained previously by the authors when applying these techniques for semi-linear problems in curved geometries. The present communication is mainly theoretical and we refer the reader interested on numerical experiments to \cite{SaSo2018,SaCeSo2019} where plenty of experiments are provided within the context of plasma equilibrium. The results presented here, however, are not limited to plasma applications and remain valid for general semi-linear elliptic equations .
\section{The discrete scheme}\label{sec:DiscreteScheme}
%
\subsection{Basic  Notation.}\label{sec:Notation}
%
\paragraph{The Computational Domain.}\label{sec:ComputationalDomain}
Let $\{\mc{T}_{h}\}_{h>0}$ be a family of simplicial triangulations of $\Omega_h$ that will be assumed to be shape-regular. i.e., there exists $\beta>0$ such that for all elements $T\in \mc{T}_h$ and all $h>0$, $h_T / \rho_T \leq \beta $, where $h_T$ is the diameter of  $T$ and $\rho_T$ is the diameter of the largest ball contained in $T$. For every element $T$, we will denote by $\boldsymbol{n}_T$ the outward unit normal vector to $T$, writing $\boldsymbol{n}$ instead of $\boldsymbol{n}_T$ when there is no confusion. We will follow the standard convention and denote $\displaystyle h:=\max_{T\in \mc{T}_h} h_T$. For the sake of simplicity we assume $h<1$.

We will denote by $e$ any face of a simplex and will call it an \textit{interior face} if there are two elements $T^+$ and $T^-$ in $\mc{T}_h$ such that $e = \partial T^+ \cap \partial T^-$. The set of all such faces will be denoted by $\mc{E}_h^\circ$. Also, we say that  a face is a \textit{boundary face} if there is an element $T\in \mc{T}_h$ such that $e=\partial T \cap \Gamma_h$ and will denote the set of boundary faces by $ \mc{E}_h^{\partial}$. The entirety of the faces of the triangulation, $\mc{E}_h$, can then be decomposed as $\mc{E}_h= \mc{E}_h^\circ \cup \mc{E}_h^{\partial}$. The length of a face $e$ will be denoted by $h_e$.

The jump of a scalar-valued function across interior faces will be denoted by $\jump{w}:=
w^+-w^-$. At the boundary faces we set $\jump{w}:=w-\varphi_h$, where $\varphi_h$ is the approximation of the boundary data at $\Gamma_h$ that will be defined later in \eqref{def:varphi_h}. For a vector-valued function $\mbf{v}$, its jump across interior faces will be denoted by $\jump{\mbf{v}}:=\mbf{v}^+\cdot \mbf{n}^+ + \mbf{v}^-\cdot \mbf{n}^-$.

\paragraph{Spaces and norms.}\label{sec:SpacesNorms}
We utilize standard terminology
for Sobolev spaces and norms, where vector-valued functions and their corresponding spaces are denoted in bold face. In particular, if $\cO$ is a domain in 
$\mathbb{R}^{d}$, $\Sigma$ is an open or closed Lipschitz curve ($d=2$) or surface ($d=3$), and $s\in\mathbb{R}$, we define $\bH^{s}(\cO) := [\rmH^{s}(\cO)]^{d}$ and $\bH^{s}(\Sigma) := [\rmH^{s}(\Sigma)]^{d}$. However, when $s=0$ we write $\bL^{2}(\cO)$ and $\bL^{2}(\Sigma)$ instead of $\bH^{0}(\cO)$ and $\bH^{0}(\Sigma)$, respectively.
The associated norms are denoted by $\|\cdot\|_{s,\cO}$ and $\|\cdot\|_{s,\Sigma}$, writing simply $\|\cdot\|_{\cO}$ and $\|\cdot\|_{\Sigma}$ when $s=0$, and the corresponding $L^2$ inner products will be denoted by $(\cdot,\cdot)_{\mathcal O}$ and $\pdual{\cdot,\cdot}_{\partial \mathcal O}$. For $s\geq 0$, we write $|\cdot|_{s,\cO}$ for the $\bH^{s}$-semi norm and  $\rmH^{s}$-semi norm.

The mesh-dependent inner products for a triangulation $\mathcal T_h$ are given by
	\begin{equation*}
	(\cdot, \cdot)_{\mc{T}_h} := \sum_{T\in \mc{T}_h} (\cdot, \cdot)_T , \qquad \pdual{\cdot, \cdot}_{\partial \mc{T}_h} :=\sum_{T\in \mc{T}_h} \pdual{\cdot, \cdot}_{\partial T} \quad  \text{ and } \quad  \langle\cdot, \cdot\rangle_{\Gamma_h} := \sum_{e\in \mc{E}_h^{\partial}} \langle\cdot, \cdot\rangle_e ,
	\end{equation*}	
and their corresponding norms will be denoted, respectively,  by
    \begin{equation*}
    \| \cdot \|_{\compD} := \left( \sum_{T\in \mc{T}_h} \| \cdot \|_T^2 \right)^{1/2}, \qquad \| \cdot \|_{\partial \mc{T}_h} := \left( \sum_{T\in  \mc{T}_h} \| \cdot \|_{\partial T}^2 \right)^{1/2} \quad  \text{ and } \quad   \| \cdot \|_{\Gamma_h} := \left( \sum_{e\in \mc{E}_h^{\partial}} \| \cdot \|_e^2 \right)^{1/2} .
    \end{equation*}
To avoid proliferation of unimportant constants, we will use the terminology $a\lesssim b$ whenever $a\leq C b$ and $C$ is a positive constant independent of $h$. 
\paragraph{The extended domain.}\label{sec:ExtendedDomain}
Given a triangulation $\mc{T}_h$ and a boundary face $e \in \mc{E}_h^{\partial}$ we will denote by $T^e$ the unique element of $\mc{T}_h$ having $e$ as a face. To a point $\mbf{x} \in e$, we associate a point $\overline{\mbf{x}} \in \Gamma_h$ and set $l(\mbf{x})=|\mbf{x}-\overline{\mbf{x}}|$. If we let $\mbf{t}=\mbf{t}(\mbf{x})$ be a normalized vector in the direction connecting $\mbf{x}$ to $\overline{\mbf{x}}$ then we can parameterize the line segment between them by
    \begin{equation*}
	\sigma_{\mbf{t}}(\mbf{x}) := \{ \mbf{x} +s\mbf{t} , s\in [0, l(\mbf{x})]\},
	\end{equation*}
and define the \textit{extension patch} as
	\begin{equation*}
	T^e_{ext} := \{ \mbf{x} +s\mbf{t} : 0 \leq s \leq l(\mbf{x}) , \mbf{x} \in e \}.
	\end{equation*}
In principle, $\overline{\mbf{x}}$ can be specified in several ways. For instance, it can be a point that minimizes the distance between $\mbf{x}$ and $\Gamma_h$. However, in that case $\overline{\mbf{x}}$ might be not unique and also the union of all such extension patches $T^e_{ext}$ may not cover the set $\compD^c := \Omega \setminus \overline{\compD}$ entirely if $\Omega$ is not convex. A second possibility is to set $\overline{\mbf{x}}$ to be the closest intersection between $\Gamma$ and the ray starting at $\mbf{x}$ having tangent vector $\mbf{n}$, the normal to the face $e$ where $\mbf{x}$ belongs. In that case, $\mbf{t}=\mbf{n}$ and $T^e_{ext}$ may not cover $\compD^c$. Moreover, $l(\mbf{x})$ could be extremely large compared to the mesh size. To define the numerical method, we consider the algorithm proposed by \cite{CoSo2012} that constructs $\overline{\bf{x}}$ in such a way that three conditions are satisfied: $\overline{\bf{x}}$ is unique, two different line segments $\sigma_{\mbf{t}}$ in $T^e_{ext}$ do not intersect each other inside $T^e_{ext}$ and the line  $\sigma_{\mbf{t}}$ does not intersect the interior of $\Omega_h$. In this case, the union of $T^e_{ext}$ completely covers $\compD^c$.

\paragraph{The extension operator.}\label{sec:extension}
Now that an extension patch $T^e_{ext}$ has been defined so that for every $T^e_{ext}\in \Omega\setminus\overline{\Omega_h}$ there corresponds a single $T^e\in \mathcal T_h$, we can define a way to extend polynomial functions defined only in the computational domain. This will be needed when transferring the boundary condition to the computational domain $\Gamma_h$.

Let $p:T^e\to \mathbb R $ be a polynomial function. We will define its extension to $T^e_{ext}$ as
    \begin{equation}\label{eq:ExtensionOperator}
    \boldsymbol E_h(p)(\boldsymbol y) := p|_{T^e}(\boldsymbol y) \qquad  \forall \, \boldsymbol y \in T^e_{ext}.
    \end{equation}
We will keep notation simple and a polynomial function $p$ should be understood as its extrapolation $E_h(p)$  whenever an evaluation outside of $\Omega_h$ is required. This should be clear from the context. For vector-valued polynomial functions, the extension is defined similarly component by component.

\subsection{The HDG method}\label{sec:HDG}
We consider the finite dimensional spaces of piecewise polynomials
	\begin{align*}
	\mbf{V}_h &:= \{\mbf{v}\in \boldsymbol L^2(\mc{T}_h) : \mbf{v}|_T \in [\md{P}_k(T)]^2, \ \forall \ T \in \mc{T}_h \}, \\
	W_h &:= \{w\in L^2(\mc{T}_h) : w|_T \in \md{P}_k(T), \ \forall \ T \in \mc{T}_h \}, \\
	M_h &:= \{\mu\in L^2(\mc{E}_h) : \mu|_T \in \md{P}_k(e), \ \forall \ e \in \mc{E}_h \},
	\end{align*}
where, $\md{P}_k(T)$ denotes the space of polynomials of degree at most $k$ defined in $T\in \mc{T}_h$ and the space $\md{P}_k(e)$, for faces $e\in \mc{E}_h$, is similarly defined. 
The HDG scheme associated to \eqref{eq:mixed} reads: Find $(\mbf{q}_h, u_h, \what{u}_h) \in \mbf{V}_h\times W_h \times M_h$, such that
	\begin{subequations}\label{eq:HDG}
	\begin{align}
	(\kap^{-1} \mbf{q}_h, \mbf{v})_{\mc{T}_h} - (u_h, \nabla \cdot \mbf{v})_{\mc{T}_h} + \pdual{\what{u}_h, \mbf{v} \cdot \mbf{n}}_{\partial \mc{T}_h} &=  0, \label{eq:HDG_a} \\
	-(\mbf{q}_h, \nabla w)_{\mc{T}_h} + \pdual{ \what{\mbf{q}}_h \cdot \mbf{n},w}_{\partial \mc{T}_h} &=  (\mc{F}(u_h),w)_{\mc{T}_h}, \label{eq:HDG_b} \\
	\pdual{\what{u}_h,\mu}_{\Gamma_h} &=  \pdual{\varphi_h,\mu}_{\Gamma_h},  \label{eq:HDG_c} \\
	\pdual{\what{\mbf{q}}_h \cdot \mbf{n}, \mu}_{\partial \mc{T}_h \setminus \Gamma_h} &= 0 	\label{eq:HDG_d},
	\end{align}
for all $(\mbf{v}, w, \mu)\in \mbf{V}_h \times W_h \times M_h$. Here 
    \begin{eqnarray}\label{eq:HDG_e}
    \what{\mbf{q}}_h\cdot \mbf{n} := \mbf{q}_h \cdot \mbf{n} +  \tau(u_h - \what{u}_h) \quad \textrm{on}\quad \partial \mc{T}_h,
    \end{eqnarray}
with $\tau$ being a positive stabilization function, whose maximum will be denoted by $\overline{\tau}$, and the approximate boundary condition, motivated by \eqref{def: varphi}, is given by
	\begin{equation}\label{def:varphi_h}
	\varphi_h(\mbf{x}):=  g(\overline{\mbf{x}})+\int_0^{l(\mbf{x})} (\kap^{-1}\,\mbf{q}_h)(\mbf{x} + \boldsymbol{t}(\mbf{x})s) \cdot  \boldsymbol{t}(\mbf{x}) ds, \quad \textrm{for} \quad \mbf{x} \in \Gamma_h.
	\end{equation}
	\end{subequations}
Note that the function $\kappa$ is defined by \eqref{eq:mixed} in the full domain $\Omega$, while the flux $\boldsymbol q_h$ is extended from $\Omega_h$ to $\Omega$ as defined in \eqref{eq:ExtensionOperator}.
%
\subsection{Local post processing of the scalar solution}\label{sec:pp} 
Our a posteriori error estimator will be obtained in terms of a local post processing $u_h^*$, which approximates the scalar unknown $u$ with enhanced accuracy.  We seek for $u_h^*$ in the space 
    \begin{equation*}
    W_h^*:= \{w\in L^2(\mc{T}_h) : w|_T \in \md{P}_{k+1}(T), \ \forall \, T \in \mc{T}_h \},
    \end{equation*}
such that, in each element $T\in \mc{T}_h$, satisfies:
    \begin{subequations}\label{eq:post_processing}
    \begin{align}
    (\kap \nabla u_h^*, \nabla w)_T + (\mc{F}(u_h^*), w)_T &= -(\boldsymbol q_h,\nabla w)_T + (\mc{F}(u_h),w)_T  && \forall \, w \in \md{P}_{k+1}(T),\label{eq:post_processing1}\\
    (u_h^*,w)_T &= (u_h,w)_T  && \forall \, w\in \md{P}_0(T).\label{eq:post_processing2}
    \end{align}
    \end{subequations}
In the case where $\mc{F}$ is independent of $u$, it is well known (Section 5.2 in \cite{CoGoSa2010}) that $u_h^*$ is well defined and converges to $u$ with order $h^{k+2}$ when the solution has enough regularity. It is also known that there is a variety of choices to construct $u_h^*$. In fact, we could consider a simpler choice and use $(\kap \nabla u_h^*, \nabla w)_T = -(q_h,\nabla w)_T$ instead of \eqref{eq:post_processing1}. However, as we will see in Section \ref{sec:Aposteriori}, the term involving  $\mc{F}$ plays a key role in deriving the error estimator.

Consider real numbers $l_u,l_{\mbf{q}}\in[0,k]$ and assume that $u\in H^{l_u+2}(\mathcal{T}_h)$ and $\mbf{q}\in H^{l_{\mathbf{q}}+1}(\mathcal{T}_h)$. We can sate the following result on the well posedness and convergence rate of the post-processing. The proof of this statement makes use of some results derived in the forthcoming {\it a priori} error analysis and will be postponed to Appendix \ref{sec:AppendixB}. 
\begin{lem}\label{lem:estim post-proc}
The local post processing $u_h^*$ is well defined for $L$ small enough. Moreover, if $Lh^2<1$ and $k\geq 1$, then
    \begin{subequations}\label{eq:estim post-proc}
    \begin{alignat}{6}
      \|u-u_h^*\|_{\compD} &\lesssim (Rh)^{1/2} (h^{l_u+1} |u|_{l_u+2,\compD} + h^{l_u+1} |\mbf{q}|_{l_{\mbf{q}}+2,\compD} )+h^{l_u+2} |u|_{l_u+2,\compD}+L h^{l_u+1}
  |u|_{l_u+2,\compD}, \label{eq:estim post-proc_1}\\ 
    |u-u_h^*|_{1,T} &\lesssim\, h_T^{l_u+1} |u|_{l_u+1,T} + Lh_T\|\varepsilon^u \|_{0,T} +\|\boldsymbol q - \boldsymbol q_h\|_{0,T} + Lh_T\|u-u_h\|_{0,T} \label{eq:estim post-proc_2} \\
    \intertext{ and }
    \sum_{e\in \mathcal{E}_h^\partial} h_e^{1/2} \|\jump{u_h^*}\|_e &\lesssim \|u-u_h^*\|^{1/2}_{\compD}   \left( \|u-u_h^*\|_{\compD}^2 + h^2 |u-u_h^*|^2_{1,\compD} \right)^{1/4} \label{eq:estim post-proc_3}.
    \end{alignat}
    \end{subequations}
\end{lem}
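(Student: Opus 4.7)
I would argue element by element via a Banach fixed-point argument. For fixed $T \in \mc{T}_h$, given $v \in \md{P}_{k+1}(T)$ with $(v,1)_T = (u_h,1)_T$, define $\Phi(v) \in \md{P}_{k+1}(T)$ as the unique solution of the \emph{linearized} problem obtained from \eqref{eq:post_processing} by replacing the nonlinear term $\mc{F}(u_h^*)$ on the left-hand side of \eqref{eq:post_processing1} by $\mc{F}(v)$, together with \eqref{eq:post_processing2}. This linear problem is well posed because $(\kap \nabla\cdot,\nabla\cdot)_T$ is coercive on the mean-zero subspace by Poincar\'e--Wirtinger. Testing the equation for $\Phi(v_1)-\Phi(v_2)$ with itself, the Lipschitz bound \eqref{eq:Lipschitz} and Poincar\'e--Wirtinger (constant $\cO(h_T)$) yield $\|\Phi(v_1)-\Phi(v_2)\|_{0,T} \lesssim L h_T^2 \|v_1-v_2\|_{0,T}$, so $\Phi$ is a contraction under $Lh^2<1$ and its unique fixed point is $u_h^*$.

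\textbf{Local $H^1$-seminorm bound \eqref{eq:estim post-proc_2}.} I would subtract \eqref{eq:post_processing1} (after integrating by parts on its right-hand side) from the element identity $(\kap\nabla u,\nabla w)_T = (\mc F(u),w)_T - \langle \mbf q\cdot\mbf n,w\rangle_{\partial T}$ (valid for any $w\in \md P_{k+1}(T)$, coming from integrating by parts in the PDE). This produces an error equation for $u - u_h^*$ whose data involves $\mbf q - \mbf q_h$, $\mc F(u) - \mc F(u_h^*)$ and $\mc F(u_h) - \mc F(u)$. Choosing $w = u_h^* - \Pi_{k+1}u$, with $\Pi_{k+1}$ the elementwise $L^2$-projection onto $\md P_{k+1}(T)$, applying Cauchy--Schwarz on the flux term, the Lipschitz property on the $\mc F$-differences, the Bramble--Hilbert approximation $|u-\Pi_{k+1}u|_{1,T}\lesssim h_T^{l_u+1}|u|_{l_u+2,T}$, and absorbing the term $L h_T\|u - u_h^*\|_{0,T}$ into the coercive left-hand side (legal under $Lh^2<1$), yields \eqref{eq:estim post-proc_2}.

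\textbf{Global $L^2$ bound \eqref{eq:estim post-proc_1} and boundary jump \eqref{eq:estim post-proc_3}.} For \eqref{eq:estim post-proc_1} I would employ an Aubin--Nitsche duality argument: solve $-\nabla\cdot(\kap\nabla z) + a(\mbf x)\,z = u - u_h^*$ in $\compD$ with $z=0$ on $\Gamma_h$, where $a$ is a bounded mean-value linearization of $\mc F$ between $u$ and $u_h^*$. Under elliptic regularity $\|z\|_{2,\compD}\lesssim \|u-u_h^*\|_{\compD}$, testing the error identity against a suitable projection of $z$ and invoking the HDG \textit{a priori} bounds for $\|u-u_h\|_{\compD}$ and $\|\mbf q - \mbf q_h\|_{\compD}$ (already derived earlier in the paper) produces \eqref{eq:estim post-proc_1}; the prefactor $(Rh)^{1/2}$ originates from the boundary-transfer contribution to the HDG flux bound. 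For \eqref{eq:estim post-proc_3}, on each $e \in \mc{E}_h^{\partial}$ I would split $\jump{u_h^*}|_e = (u_h^* - u)|_e + (u - \varphi_h)|_e$. The first summand is controlled via the multiplicative trace inequality $h_T\|v\|_{\partial T}^2 \lesssim \|v\|_{0,T}(\|v\|_{0,T}^2 + h_T^2|v|_{1,T}^2)^{1/2}$ applied to $v = u - u_h^*$, while the second uses $(u-\varphi_h)(\mbf x) = \int_0^{l(\mbf x)} \kap^{-1}(\mbf q-\mbf q_h)\cdot\mbf t\,ds$, reducing it to a flux error which is in turn absorbed using the HDG estimates; a Cauchy--Schwarz over $\mc{E}_h^{\partial}$ closes the argument.

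\textbf{Expected main obstacle.} The recurring difficulty is absorbing the nonlinear contribution: the Lipschitz property of $\mc F$ systematically produces a term of the form $Lh\|u-u_h^*\|$ that must be moved back to the coercive side. The smallness condition $Lh^2<1$ is precisely what legitimizes these absorptions and is the structural counterpart of the contractivity of $\Phi$ in the well-posedness step. The most delicate passage is the duality step, since it requires the HDG bounds for $\|\mbf q - \mbf q_h\|_{\compD}$, including the transfer-path constant $R$, to already be in place with the correct $h$-scaling; this is what dictates the somewhat unusual form of the right-hand side of \eqref{eq:estim post-proc_1}.
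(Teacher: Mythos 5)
Your well-posedness argument and your treatment of the local $H^1$ estimate \eqref{eq:estim post-proc_2} follow essentially the same lines as the paper: an elementwise Banach fixed point for the linearized local problem (the paper's contraction constant is the squared Friedrichs constant of $T$ times $L$, i.e.\ $\cO(Lh_T^2)$, exactly as you compute), and a local energy argument closed with scaling and inverse inequalities.

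The $L^2$ estimate \eqref{eq:estim post-proc_1} is where your route diverges and where there is a genuine gap. You propose an Aubin--Nitsche duality argument applied directly to $u-u_h^*$. But $u_h^*$ is defined only through the local problems \eqref{eq:post_processing}: it satisfies no global variational identity, is discontinuous across interior faces, and does not equal $\varphi$ on $\Gamma_h$. Pairing $u-u_h^*$ with the dual source and integrating by parts elementwise therefore leaves the consistency terms $\sum_{T}\langle \kap\nabla z\cdot\mbf{n},\,u-u_h^*\rangle_{\partial T}$, i.e.\ the interior jumps of $u_h^*$ and a boundary mismatch involving $\varphi-\varphi_h$. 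The only available control of those jumps is \eqref{eq:estim post-proc_3}, which is itself expressed in terms of $\|u-u_h^*\|_{\compD}$ --- the quantity you are trying to bound --- so the sketch is circular; and you give no mechanism by which the duality would gain the extra half power $(Rh)^{1/2}$ over the $\cO(h^{k+1})$ bounds for $\|u-u_h\|_{\compD}$ and $\|\mbf{q}-\mbf{q}_h\|_{\compD}$ that you invoke. The paper avoids all of this by decomposing $u-u_h^*=(I-P_{W^*})u+P_0(P_{W^*}u-u_h^*)+(I-P_0)(P_{W^*}u-u_h^*)$ and exploiting the averaging condition \eqref{eq:post_processing2}, which gives $P_0u_h^*=P_0u_h$, together with $P_0P_{W^*}u=P_0\Pi_W u$ (this is where $k\geq 1$ enters); the mean-value part is then bounded by $\|\erroru\|_{\compD}=\|\Pi_W u-u_h\|_{\compD}$, whose superconvergent factor $(Rh)^{1/2}$ is already available from \eqref{ineq:Eu}, while the zero-mean part is handled by a purely local energy argument. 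This reduction to $\erroru$ is the essential idea your proposal is missing. A smaller mismatch: in \eqref{eq:estim post-proc_3} your splitting $\jump{u_h^*}=(u_h^*-u)+(\varphi-\varphi_h)$ on boundary faces is more scrupulous than the paper's (which keeps only the traces of $u-u_h^*$), but it leaves a residual flux term that does not appear on the right-hand side of the stated inequality, so as written you would be proving a different estimate.
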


Here, $R$---which will be defined properly in the following section---is proportional to the product $h^{-1} dist(\Gamma_h,\Gamma)$. When $dist(\Gamma_h,\Gamma)$ is of order $h$, then $R$ is of order one. This result guarantees a superconvergence of $h^{k+2}$ if $L<h$  and $R$ is of order $h$. If $R$ is of order one, it only ensures a convergence of order $h^{k+3/2}$. However, for the linear case, \cite{CoQiuSo2014,CoSo2012} reported numerical experiments suggesting that the order is indeed $h^{k+2}$ even when $R$ is of order one.

\section{Well-posedness}\label{sec:well-posedness}

In this section we employ a Banach fixed-point argument to ensure the well-posedness of \eqref{eq:HDG}. To that end, we define the operator $\mc{J} :  W_h  \to  W_h $ that maps $\zeta$ to the second component of the triplet $(\mbf{q}, u, \what{u})\in \mbf{V}_h\times W_h \times M_h$ satisfying the linearized HDG system \eqref{eq:HDG} where the source has been evaluated at $\zeta$, namely
	\begin{subequations}\label{eq:Fixed Point}
	\begin{align}
	(\kap^{-1}\ \mbf{q}, \mbf{v})_{\mc{T}_h} - (u, \nabla \cdot \mbf{v})_{\mc{T}_h} + \pdual{\what{u}, \mbf{v} \cdot \mbf{n}}_{\partial \mc{T}_h} &= 0, \label{eq:Fixed Point 1} \\
	-(\mbf{q}, \nabla w)_{\mc{T}_h} + \pdual{ \what{\mbf{q}} \cdot \mbf{n},w}_{\partial \mc{T}_h}  &= (\mc{F}(\zeta),w)_{\mc{T}_h}, \label{eq:Fixed Point 2} \\
	\pdual{\what{u},\mu}_{\Gamma_h} &=  \pdual{\varphiJ,\mu}_{\Gamma_h}, \label{eq:Fixed Point 3} \\
	\pdual{\what{\mbf{q}} \cdot \mbf{n}, \mu}_{\partial \mc{T}_h \setminus \Gamma_h} &= 0, \label{eq:Fixed Point 4}
	\end{align}

for all $(\mbf{v}, w, \mu)\in \mbf{V}_h \times W_h \times M_h$, where $\what{\mbf{q}}\cdot \mbf{n} := \mbf{q} \cdot \mbf{n} +  \tau(u - \what{u})$ and
    \begin{equation}\label{varphiq}
    \varphiJ(\mbf{x}):=  g(\overline{\mbf{x}})+\int_0^{l(\mbf{x})} (\kap^{-1}\,\mbf{q})(\mbf{x} + \boldsymbol{t}(\mbf{x})s) \cdot  \boldsymbol{t}(\mbf{x}) ds,
    \end{equation}
    	\end{subequations}
for $\mbf{x} \in \Gamma_h$. We stress the difference between the non-linear mapping $\mc J$, which maps \textit{arguments} of the source $\mc F(\cdot)$ to solutions of the corresponding HDG system, and the linear solution operator $\mc S: W_h\to W_h$ that maps the source term $\mc F$ itself to the solution of the corresponding HDG system.

As we can expect, assumptions on the line segments $\sigma_{\mbf{t}}$ and distance between $\Gamma_h$ and $\Gamma$ will be needed to ensure the well-posedness of \eqref{eq:Fixed Point} and the contraction property of the fixed-point operator. In order to specify these assumptions we need to define some additional quantities. Given a boundary face $e\in\mathcal E^\partial_h$, its corresponding element $T^e$, and the extension patch $T^e_{ext}$, we denote by $h_e^{\perp}$ (resp. $H_e^{\perp}$) the largest distance of a point inside $T_e$ (resp. $T_{ext}^e$) to the the plane determined by the face $e$, set $r_e := H_e^{\perp} / h_e^{\perp}$ and $\displaystyle R:= \max_{e\in \mc{E}_h^{\partial}} r_e$.  

For an extended patch $T^e_{ext}$ and element $T^e\in\mathcal T_h$ sharing a face $e$, we define 
\[
\mathcal V^k := \left\{ \boldsymbol p \in \mathbb [\mathbb{P}_k(T^e_{ext} \cup T^e)]^2\, : \,  \boldsymbol p\cdot \boldsymbol{n}_e\neq\boldsymbol 0 \right\}.
\]
and we denote by $\boldsymbol n_e$ the \textit{interior} normal vector to $T^e_{ext}$ along the face $e$---i.e. the exterior normal vector to $T^e$ pointing in the direction of $T^e_{ext}$. We can then introduce the constants
    \begin{equation*}
	C^e_{ext} := \dfrac{1}{\sqrt{r_e}} \sup_{\bsy{\chi}\in \mathcal V^k} \dfrac{\|\bsy{\chi}\cdot\boldsymbol n_e \|_{T_{ext}^e} }{\|\bsy{\chi}\cdot\boldsymbol n_e\|_{T^e}} \quad \text{ and } \quad
	C^e_{inv} := h_e^{\perp} \sup_{\bsy{\chi}\in \mathcal V^k} \dfrac{\|\nabla \bsy{\chi}\cdot \boldsymbol n_e  \|_{T^e} }{\|\bsy{\chi}\cdot\boldsymbol n_e \|_{T^e}},
	\end{equation*}
where, in abuse of notation, $\boldsymbol n_e$ is a constant vector field in $T^e_{ext}$  that coincides with the normal vector (pointing outwards $T^e$) associated to the face $e$. In \cite{CoQiuSo2014} it was shown that the constants $C^e_{ext}$ and $C^e_{inv}$ are independent of $h$, but depend on the polynomial degree; in particular, the supremum appearing in the definition of $C^e_{inv}$ is proportional to $(h_e^\perp)^{-1}$.

With these definitions in place, we can now state the following set of geometric assumptions on the boundary faces of the triangulation.

\textbf{Assumptions.} For each $e\in \mc{E}_h^{\partial}$ we will require the following to hold:
\begin{subequations}\label{eq:S}
\begin{multicols}{2}\noindent
	\begin{equation}
	\mbf{t}(\mbf{x}) = \mbf{n} \text{ for all }\mbf{x}\in e,\label{eq:S1} 
    \end{equation}
    \begin{equation}
    r_e\leq C,\label{eq:S2}
	\end{equation}
	\begin{equation}
	\displaystyle \overline{\tau}  \, H_e^{\perp}\, \underline{\kappa}^{-1} \leq \dfrac{1}{3},\label{eq:S3}
	\end{equation}
	\begin{equation}
	\overline{\kap}\, \underline{\kap}^{-1}\, r_e^3\, (C_{ext}^e\,  C_{inv}^e)^2  \leq 1 .\label{eq:S4}
	\end{equation}
\end{multicols}
\end{subequations}
Before proceeding, let us comment on this set of assumptions. As mentioned at the end of Section \ref{sec:Notation}, the vector $\mbf{t}(\mbf{x})$ does not necessarily have to be normal to the face $e$. Therefore, the results presented in what follows still hold if \eqref{eq:S1} is not satisfied as long as the difference $1 -\mbf{t}(\mbf{x}) \cdot \mbf{n}$ is positive and small enough.  However, this assumption helps us to facilitate the presentation of the ideas behind the proofs. On the other hand, \eqref{eq:S2} imposes the geometric constraint that the family of triangulations $\{\mathcal T_h\}$ should be such that the distance between the computational boundary $\Gamma_h$ and the true boundary $\Gamma$ remains locally of the same order of magnitude as the face mesh parameter $h_e$. Moreover, it guarantees that as long as $H_e^\perp >0$, then $H_e^\perp\sim h_e^\perp$; if $H_e^\perp = 0$ for some $e$, then no transfer of boundary conditions is needed on that particular face---as this would only happen if $e\cap\Omega=e$. Given that the stabilization parameter $\tau$ is of order one, \eqref{eq:S3} states that the minimum value of the diffusion coefficient $\kappa$ imposes a restriction on how far apart $\Gamma_h$ and $\Gamma$ are allowed to be.  Due to the proportionality guaranteed by \eqref{eq:S2}, then \eqref{eq:S3} will hold whenever the mesh size---and therefore the distance between the boundaries---is small enough. Assumption \eqref{eq:S4} is the most demanding of all. By requiring $r_e$ to be small enough compared to the product $\overline{\kappa}\,\underline{\kappa}^{-1}$, the condition effectively limits the range of values of $\kappa$ that the method is able to resolve for a given, fixed, distance between the computational and physical boundaries, as measured by $H_e^\perp$. Not surprisingly, the closer to zero the diffusion coefficient gets, the smaller $H_e^\perp$ must be with respect to the mesh size near the computational boundary.
	
The main result of this section, Theorem \ref{thm:Fixed Point}, is that under suitable assumptions $\mc{J}$ is a contraction and therefore the solution of \eqref{eq:HDG} can be obtained by applying it iteratively. The proof of this fact is almost a straightforward consequence of the linearity of the solution map and of a key stability bound established in Lemma \ref{lem:StabilityS} that estimates the norm of $u$ in terms of those of the sources and the boundary conditions. However, the proof of the latter follows from a lengthy series of estimates. In order to prioritize clarity of exposition, we first present this estimate without proving it and show how the main result follows from it. The technical details of the proof of Lemma \ref{lem:StabilityS} are then presented afterwards. 
\begin{lem}\label{lem:StabilityS} Suppose that Assumptions \eqref{eq:S1} throughout \eqref{eq:S4} and the elliptic regularity of the auxiliary problem \eqref{eq: dual problem} are satisfied. Then, there exists $\widetilde{c}>0$, independent of $h$ such that 
    \begin{equation}\label{eq:stability}
	\|u\|_{\compD} \leq 4\, \max\{\widetilde{c}^2 \, h, 1\} \|\mc{F}(\zeta)\|_{\compD} + 2\, \widetilde{c} (\sqrt{3}+1) \, h^{1/2} \|\kap^{1/2}\, l^{-1/2}\,  \g\|_{\Gamma_h},
	\end{equation}
where $\g(\mbf{x})=\mathrm{g}(\overline{\mbf{x}}(\mbf{x}))  \ \forall\,  \mbf{x}\in \Gamma_h$.	
\end{lem}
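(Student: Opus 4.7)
The plan is to combine a discrete energy identity for the HDG system \eqref{eq:Fixed Point} with a duality argument that exploits the assumed elliptic regularity of the auxiliary adjoint problem \eqref{eq: dual problem}. The argument naturally splits into two stages: first produce an energy bound controlling $\|\kap^{-1/2}\mbf{q}\|_{\compD}$ in terms of $\|u\|_{\compD}$ and the data, then use duality to bound $\|u\|_{\compD}$ itself and close the estimate.

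For the energy stage, I would take $(\mbf{v}, w, \mu) = (\mbf{q}, u, \what{u})$ as test functions in \eqref{eq:Fixed Point 1}--\eqref{eq:Fixed Point 4}, substitute the definition of the numerical flux from \eqref{eq:HDG_e}, and invoke the boundary condition \eqref{eq:Fixed Point 3} to obtain an identity of the form
\begin{equation*}
\|\kap^{-1/2}\mbf{q}\|_{\compD}^{2} + \|\tau^{1/2}(u-\what{u})\|_{\partial \mc{T}_h}^{2} = (\mc{F}(\zeta), u)_{\compD} - \pdual{\varphiJ,\,\what{\mbf{q}}\cdot\mbf{n}}_{\Gamma_h}.
\end{equation*}
Using the representation \eqref{varphiq}, the boundary pairing splits into a term involving $\g$ (which will contribute the factor $\|\kap^{1/2} l^{-1/2}\g\|_{\Gamma_h}$ on the right of \eqref{eq:stability}) and a term involving the path-integral of $\kap^{-1}\mbf{q}$. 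The latter is the core of the argument: invoking \eqref{eq:S1} so that $\mbf{t}=\mbf{n}_e$, Cauchy--Schwarz in the arc-length variable $s$, the parameterization of $T^e_{ext}$, and the defining inequalities for $C^e_{ext}$ and $C^e_{inv}$ together with the scaling factor $H_e^\perp$, one converts a surface integral on $\Gamma_h$ into a volume integral on $T^e_{ext}$ and then pulls it back onto $T^e$. Assumptions \eqref{eq:S3}--\eqref{eq:S4} ensure that the constants arising in this process are small enough for the resulting $\|\mbf{q}\|_{\compD}$ contribution to be absorbed into the left-hand side, yielding an energy bound roughly of the form $\|\kap^{-1/2}\mbf{q}\|_{\compD} \lesssim \|\mc{F}(\zeta)\|_{\compD}^{1/2}\|u\|_{\compD}^{1/2} + h^{1/2}\|\kap^{1/2} l^{-1/2}\g\|_{\Gamma_h}$.

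For the duality stage, I would set the datum of the adjoint problem \eqref{eq: dual problem} equal to $u$, and use elliptic regularity to produce a dual pair $(\bsy{\psi}, \phi)$ with $\|\bsy{\psi}\|_{1,\compD}+\|\phi\|_{2,\compD}\lesssim \|u\|_{\compD}$. Testing \eqref{eq:Fixed Point 1}--\eqref{eq:Fixed Point 4} against suitable HDG/BDM and $L^{2}$ projections of $(\bsy{\psi},\phi)$, integrating by parts element-wise, and using the orthogonality/approximation properties of those projections, one obtains an identity of the form $\|u\|_{\compD}^{2} = (\mc{F}(\zeta),\phi)_{\compD} + \pdual{\varphiJ,\,\bsy{\psi}\cdot\mbf{n}}_{\Gamma_h} + \mathcal{R}_h$, where $\mathcal{R}_h$ collects the projection residuals. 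The volume term is bounded by $\|\mc{F}(\zeta)\|_{\compD}\|u\|_{\compD}$, $\mathcal{R}_h$ contributes an additional factor $h$ multiplying $\|\mc{F}(\zeta)\|_{\compD}\|u\|_{\compD}$ (which explains the $\widetilde{c}^{\,2} h$ inside the maximum), and the boundary pairing is again handled by the path-integral/extension technique of the previous paragraph: the $\g$-piece generates the $h^{1/2}\|\kap^{1/2} l^{-1/2}\g\|_{\Gamma_h}$ summand, while the $\mbf{q}$-piece is controlled by the energy bound from the first stage. Combining the two stages and applying Young's inequality yields \eqref{eq:stability}.

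The principal obstacle is the careful accounting of the geometric constants $r_e$, $C^e_{ext}$, $C^e_{inv}$, and $H_e^{\perp}$ so that assumptions \eqref{eq:S2}--\eqref{eq:S4} suffice to absorb every stray $\mbf{q}$-contribution into the left-hand side without sacrificing the sharp factor $h^{1/2}$ on the boundary data, and to track how the constants combine to produce the precise coefficients $4\max\{\widetilde{c}^{\,2} h,1\}$ and $2\widetilde{c}(\sqrt{3}+1)$ appearing in \eqref{eq:stability}; assumption \eqref{eq:S1} simplifies but does not change the essence of this bookkeeping.
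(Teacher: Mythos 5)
Your two-stage plan is essentially the paper's own argument: the energy identity you describe is exactly Lemma \ref{lem:EstimateNormH} (proved via Lemma \ref{lem:estim-varphiq} and the $\delta_{\mbf{q}}$/extension-constant machinery under \eqref{eq:S3}--\eqref{eq:S4}), and the duality stage with $\Theta=u$, the HDG projections, and the decomposition of the boundary pairing matches the paper's treatment of $\md{T}_{\boldsymbol q}$, $\md{T}_{\mc{F}}$, $\md{T}_u$, closed by Young's inequality. The only slight misattribution is in the bookkeeping: the $h^{1/2}$ on the $\g$-term and the $\widetilde{c}^{\,2}h$ inside the max both arise in the duality stage (the latter from Young's inequality applied to $\widetilde{c}\,h^{1/2}\|\mc{F}(\zeta)\|_{\compD}^{1/2}\|u\|_{\compD}^{1/2}$), not from the energy estimate or the projection residuals as you suggest, but this does not affect the validity of the argument.
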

Thanks to this estimate the main result, from which well-posedness of the problem follows, can be proved in a very compact way, as we now demonstrate.
\begin{thm}\label{thm:Fixed Point}
If Assumptions \eqref{eq:S1} throughout \eqref{eq:S4} and the elliptic regularity of the auxiliary problem \eqref{eq: dual problem} hold, then $\mc{J}$ is well-defined. Furthermore, if we assume $4\, L\, \max\{\widetilde{c}^2\, h, 1\} < 1 $, then $\mc{J}$ is a contraction operator.
\end{thm}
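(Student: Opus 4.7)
The statement divides into two assertions, and my plan is to let the stability bound in Lemma \ref{lem:StabilityS} do almost all the work.

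First I would establish well-posedness of $\mc{J}$. Since \eqref{eq:Fixed Point} is a square, finite-dimensional linear system in the unknowns $(\mbf{q},u,\what{u})$ (with $\zeta$ fixed), it suffices to verify that its homogeneous version has only the trivial solution. Setting $\mc{F}(\zeta)\equiv 0$ and $g\equiv 0$ in the right-hand side, the boundary datum $\varphiJ$ reduces to the flux-dependent integral alone, and $\g\equiv 0$ in Lemma \ref{lem:StabilityS}. The stability bound \eqref{eq:stability} then yields $\|u\|_{\compD}=0$, so $u\equiv 0$. Substituting $u=0$ back into \eqref{eq:Fixed Point 1} and testing with $\mbf{v}=\mbf{q}$, while testing \eqref{eq:Fixed Point 2} with $w=u=0$ and using the single-valuedness of $\what{\mbf{q}}\cdot\mbf{n}$ on interior faces from \eqref{eq:Fixed Point 4}, the standard HDG energy identity combined with the positivity of $\kap^{-1}$ and $\tau$ forces $\mbf{q}\equiv 0$ and $u=\what{u}$ on $\partial\mc{T}_h$, whence $\what{u}\equiv 0$.

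Second, I would establish the contraction property by exploiting the linearity of the solution map with respect to the data. Given $\zeta_1,\zeta_2\in W_h$ with images $(\mbf{q}_i,u_i,\what{u}_i)$ under the operator associated with \eqref{eq:Fixed Point}, the differences $(\mbf{q}_1-\mbf{q}_2,\,u_1-u_2,\,\what{u}_1-\what{u}_2)$ satisfy the same HDG system with source $\mc{F}(\zeta_1)-\mc{F}(\zeta_2)$; crucially, since $g$ enters $\varphi_{\mbf{q}_i}$ identically in both problems, it cancels in the subtraction, so the effective boundary term $\g$ for the difference problem vanishes. Applying Lemma \ref{lem:StabilityS} to this difference system gives
\begin{equation*}
\|u_1-u_2\|_{\compD} \leq 4\max\{\widetilde{c}^{\,2}h,1\}\,\|\mc{F}(\zeta_1)-\mc{F}(\zeta_2)\|_{\compD},
\end{equation*}
and the Lipschitz hypothesis \eqref{eq:Lipschitz} (with constant $L\leq L_\Omega$ on $\compD$) then produces
\begin{equation*}
\|\mc{J}(\zeta_1)-\mc{J}(\zeta_2)\|_{\compD} \leq 4L\max\{\widetilde{c}^{\,2}h,1\}\,\|\zeta_1-\zeta_2\|_{\compD}.
\end{equation*}
The hypothesis $4L\max\{\widetilde{c}^{\,2}h,1\}<1$ thus yields a contraction constant strictly less than one.

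The main potential obstacle is to verify that the boundary contribution in the difference problem really produces an effective $\g\equiv 0$ in the form in which Lemma \ref{lem:StabilityS} is stated, rather than a residual term involving $\kap^{-1}(\mbf{q}_1-\mbf{q}_2)$ along the transferring paths. This is resolved by recognizing that the lemma is stated for the full system \eqref{eq:Fixed Point}, whose left-hand side already incorporates the flux-dependent part of $\varphiJ$ through \eqref{eq:Fixed Point 3}; the $\g$-term on the right-hand side only captures the data $g$ evaluated at $\overline{\mbf{x}}$. Beyond this observation, the argument is essentially algebraic and the heavy lifting is deferred to the proof of Lemma \ref{lem:StabilityS}.
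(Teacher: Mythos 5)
Your proof is correct and follows essentially the same route as the paper: the contraction estimate is obtained exactly as in the text, by applying Lemma \ref{lem:StabilityS} to the difference problem (where the boundary datum $g$ cancels, leaving $\g\equiv 0$) and then invoking the Lipschitz continuity of $\mc{F}$. The only difference is in the well-posedness step, where the paper simply cites \cite{CoQiuSo2014} for the unique solvability of the linear system under Assumptions \eqref{eq:S} while you give a self-contained uniqueness-implies-existence argument; that argument is fine, but to conclude $\mbf{q}\equiv 0$ in the homogeneous case you should appeal to Lemma \ref{lem:EstimateNormH} (which absorbs the boundary term $\langle \varphiJ, \what{\mbf{q}}\cdot\mbf{n}\rangle_{\Gamma_h}$ using Lemma \ref{lem:estim-varphiq} and Assumptions \eqref{eq:S}) rather than the bare energy identity, since that boundary term does not vanish a priori.
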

\begin{proof}
The system in \eqref{eq:Fixed Point} is linear and has a unique solution under the set of assumptions \eqref{eq:S} (see \cite{CoQiuSo2014}), therefore the operator $\mc{J}$ is well-defined.

Let $\zeta_1, \zeta_2 \in W_h$ and consider $u_1 = \mc{J}(\zeta_1)$ and $u_2 = \mc{J}(\zeta_2)$. Then, $u_1$ and $u_2$ are the second component of the solution of \eqref{eq:Fixed Point} with right hand sides $\mc{F}(\zeta_1)$ and $\mc{F}(\zeta_2)$, respectively. Hence, the difference $u_1-u_2$ satisfies equations \eqref{eq:Fixed Point}, with source term $\mc{F}(\zeta_1)-\mc{F}(\zeta_2)$ and homogeneous boundary conditions on $\Gamma$. By the stability estimate in Lemma \ref{lem:StabilityS} and Lipschitz continuity assumption, we have
    \begin{equation*}
	\|\mc{J}(\zeta_1)-\mc{J}(\zeta_2)\|_{\compD} = \| u_1-u_2 \|_{\compD} \leq 4\max\{\widetilde{c}^2\, h, 1\} \|\mc{F}(\zeta_1)-\mc{F}(\zeta_2)\|_{\compD}
	\leq 4\,L\, \max\{\widetilde{c}^2 \,h, 1\} \|\zeta_1-\zeta_2\|_{\compD}. 	
    \end{equation*}
The result follows due to  $4\, L\, \max\{\widetilde{c}^2\, h, 1\} < 1 $. 
\end{proof}
As a consequence of the above result, system \eqref{eq:HDG} subject to the hypotheses of the theorem has a unique solution that depends continuously on the problem data. 

We now present the arguments that lead to the proof of Lemma \ref{lem:StabilityS}. We start by establishing a connection between the norm of the transferred boundary conditions $\varphiJ$, the magnitude of the flux $\boldsymbol q$ and the length of the transfer path taken. In order to do so, we will make use of a tool introduced in \cite{CoQiuSo2014}. For any smooth enough function $\mbf{v}$ defined in $T^e\cup T_{ext}^e$ and $\mbf{x}\in\Gamma_h$ we set
	\begin{equation}\label{def:delta}
	\delta_{\mbf{v}} (\mbf{x}) := \dfrac{1}{l(\mbf{x})} \int_0^{l(\mbf{x})} [\mbf{v}(\mbf{x} + \mbf{n}s) - \mbf{v}(\mbf{x}) ] \cdot \mbf{n} ds.
	\end{equation}
The significance of this function is that it will allow us to separate the contributions to the boundary conditions coming from the flux, from the diffusivity,  and from the length of the transfer path. Observe that due to Assumption \eqref{eq:S1} and \eqref{varphiq}, we have
	\begin{align*}
	\varphiJ(\mbf{x}) - \mathrm{g}(\overline{\mbf{x}}(\mbf{x})) &=  \int_0^{l(\mbf{x})} \kap^{-1}(\mbf{x})\, \mbf{q}(\mbf{x} + \mbf{n}s) \cdot \mbf{n} ds \\
	&= \kap^{-1}(\mbf{x})\int_0^{l(\mbf{x})} [ \mbf{q}(\mbf{x} + \mbf{n}s) - \mbf{q}(\mbf{x})] \cdot \mbf{n} ds + l(\mbf{x}) \kap^{-1}(\mbf{x}) \mbf{q}(\mbf{x}) \cdot \mbf{n} ,
	\end{align*}
with $\mbf{q} \in \mbf{V}_h$, and using the definition of $\delta_{\mbf{q}}$, given in \eqref{def:delta}, we can rewrite the above as
	\begin{equation}\label{eq:varphiq}
	\varphiJ(\mbf{x}) - \mathrm{g}(\overline{\mbf{x}}(\mbf{x})) =  \kap^{-1}(\mbf{x}) \,l(\mbf{x}) \,\delta_{\mbf{q}} (\mbf{x}) +  \kap^{-1}(\mbf{x})\, l(\mbf{x}) \, \mbf{q}(\mbf{x}) \cdot \mbf{n}.
	\end{equation}
This expression, combined with the bounds that we will derive in Lemma \ref{lem:estim-varphiq} below, will enable us to estimate the approximate solution in terms of the sources, as will become evident in Lemma \ref{lem:EstimateNormH}. 

In proving the next result, we will have to make use of the following properties of $\delta_{\boldsymbol v}$, which hold for each $e\in \mc{E}_h^{\partial}$ \cite[Lemma 5.2]{CoQiuSo2014}:
	\begin{subequations}\label{ineq:deltav}
	\begin{align}
	\| l^{1/2} \delta_{\mbf{v}} \|_e &\leq \dfrac{1}{\sqrt{3}} \, r_e^{3/2}\, C_{ext}^e\, C_{inv}^e\, \|\mbf{v}\|_{T^e}  & &\forall \, \mbf{v} \in [\md{P}_k(T)]^d, \label{ineq:delta Pk} \\
	\| l^{1/2}\delta_{\mbf{v}} \|_e &\leq \dfrac{1}{\sqrt{3}}\, r_e \,\| h_e^{\perp} \partial_n \mbf{v} \cdot \mbf{n}\|_{T_{ext}^e} & &\forall \, \mbf{v} \in [H^1(T)]^d. 	\label{ineq:delta H1}
	\end{align}
	\end{subequations}
The following three inequalities follow readily from estimate \eqref{ineq:delta Pk}, assumptions \eqref{eq:S4} and \eqref{eq:S3}, and Young's inequalities.

\begin{lem}\label{lem:estim-varphiq}
Let $\varphiJ$ be the transferred boundary condition defined in \eqref{varphiq} and suppose that Assumptions S are satisfied. It holds
	\begin{align*}
	|\pdual{ \varphiJ , \delta_{\mbf{q}} }_{\Gamma_h}| &\leq \dfrac{1}{6} \|\kap^{1/2}l^{-1/2} \varphiJ\|^2_{\Gamma_h} + \dfrac{1}{2} \|\kap^{-1/2}\mbf{q}\|_{\compD}^2 \\
	|\pdual{\varphiJ, \tau(u - \what{u})}_{\Gamma_h}| &\leq \dfrac{1}{6} \|\kap^{1/2}l^{-1/2}\varphiJ\|^2_{\Gamma_h} + \dfrac{1}{2} \|\tau^{1/2} (u - \what{u})\|^2_{\partial \mc{T}_h} \\
   |\langle \varphiJ, \kap \, l^{-1}\, \g\rangle_{\Gamma_h}| &\leq \dfrac{1}{6} \|\kap^{1/2}l^{-1/2}\,  \varphiJ\|^2_{\Gamma_h} + \dfrac{3}{2} \|\kap^{1/2}l^{-1/2}\,  \g\|^2_{\Gamma_h}
	\end{align*}
	\hfill$\square$
\end{lem}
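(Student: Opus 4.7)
The three bounds all share the same basic structure: split the pairing into a weighted factorization, use Cauchy--Schwarz with Young's inequality to peel off a term of the form $\frac16\|\kap^{1/2}l^{-1/2}\varphiJ\|^2_{\Gamma_h}$, and then control the remainder using the geometric hypotheses. The main work lies in identifying the right weights and verifying that Assumptions \eqref{eq:S3}--\eqref{eq:S4} produce the stated numerical constants.

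For the first inequality I would write, on each boundary face,
\[
\varphiJ\,\delta_{\mbf{q}} \;=\; \bigl(\kap^{1/2}l^{-1/2}\varphiJ\bigr)\bigl(\kap^{-1/2}l^{1/2}\delta_{\mbf{q}}\bigr),
\]
apply Cauchy--Schwarz on $\Gamma_h$, and then Young's inequality with parameter $\epsilon=1/3$ to obtain
\[
|\pdual{\varphiJ,\delta_{\mbf{q}}}_{\Gamma_h}|
\leq \tfrac{1}{6}\|\kap^{1/2}l^{-1/2}\varphiJ\|^{2}_{\Gamma_h}
      + \tfrac{3}{2}\|\kap^{-1/2}l^{1/2}\delta_{\mbf{q}}\|^{2}_{\Gamma_h}.
\]
The remaining step is to absorb $\tfrac{3}{2}\|\kap^{-1/2}l^{1/2}\delta_{\mbf{q}}\|^{2}_{\Gamma_h}$ into $\tfrac{1}{2}\|\kap^{-1/2}\mbf{q}\|^{2}_{\compD}$. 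Bounding $\kap^{-1}\leq \underline{\kap}^{-1}$ on $e$ and invoking \eqref{ineq:delta Pk} gives
\[
\|\kap^{-1/2}l^{1/2}\delta_{\mbf{q}}\|^{2}_e
\leq \tfrac{1}{3}\,\underline{\kap}^{-1}\,r_e^{3}\,(C^{e}_{ext}C^{e}_{inv})^{2}\,\|\mbf{q}\|^{2}_{T^e}.
\]
Assumption \eqref{eq:S4} yields $\underline{\kap}^{-1}r_e^{3}(C^{e}_{ext}C^{e}_{inv})^{2}\leq \overline{\kap}^{-1}$, and since $\overline{\kap}^{-1}\leq \kap^{-1}$ pointwise, summing over $e\in\mc{E}^{\partial}_h$ bounds the remainder by $\tfrac{1}{3}\|\kap^{-1/2}\mbf{q}\|^{2}_{\compD}$, and multiplication by $3/2$ produces the stated constant $1/2$.

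For the second bound the approach is the same, but now the extra weighting is driven by \eqref{eq:S3}. I factor the integrand as
\[
\varphiJ\,\tau(u-\what{u})
= \bigl(\kap^{1/2}l^{-1/2}\varphiJ\bigr)\bigl(\tau^{1/2}(u-\what{u})\bigr)\bigl(\tau^{1/2}\kap^{-1/2}l^{1/2}\bigr),
\]
and bound the third factor uniformly. Because Assumption \eqref{eq:S1} makes $l(\mbf{x})\leq H^{\perp}_e$ on each face, Assumption \eqref{eq:S3} gives $\tau\kap^{-1}l\leq \overline{\tau}\underline{\kap}^{-1}H_e^{\perp}\leq 1/3$, so the third factor is at most $1/\sqrt{3}$. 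Pulling this constant outside the integral and then applying Young's inequality with $\epsilon=\sqrt{3}$ to the remaining two factors produces the advertised bound, since $\tfrac{1}{\sqrt{3}}\cdot\tfrac{1}{2\sqrt{3}}=\tfrac{1}{6}$ and $\tfrac{1}{\sqrt{3}}\cdot\tfrac{\sqrt{3}}{2}=\tfrac{1}{2}$.

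The third bound is purely algebraic: write
\[
\varphiJ\,\kap\,l^{-1}\g
= \bigl(\kap^{1/2}l^{-1/2}\varphiJ\bigr)\bigl(\kap^{1/2}l^{-1/2}\g\bigr),
\]
and apply Young's inequality with $\epsilon=1/3$ directly. No geometric assumption beyond the positivity of $\kap$ and $l$ is needed for this last one. The real obstacle in the whole lemma is bookkeeping: making sure that the constants $1/3$, $1/\sqrt{3}$, and $\epsilon$ match so that every remainder collapses to exactly $1/6$, $1/2$, or $3/2$; once the factorizations are chosen, the rest is mechanical.
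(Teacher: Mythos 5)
Your proof is correct and follows exactly the route the paper indicates (the paper omits the details, stating only that the bounds ``follow readily from estimate \eqref{ineq:delta Pk}, assumptions \eqref{eq:S4} and \eqref{eq:S3}, and Young's inequalities''): you use \eqref{ineq:delta Pk} together with \eqref{eq:S4} to absorb the $\delta_{\mbf{q}}$ remainder into $\tfrac12\|\kap^{-1/2}\mbf{q}\|^2_{\compD}$, \eqref{eq:S1} and \eqref{eq:S3} to control the weight $\tau\kap^{-1}l\leq 1/3$ in the second bound, and appropriately tuned Young inequalities throughout, all with correct constants. The only implicit step, shared with the paper, is that summing $\|\kap^{-1/2}\mbf{q}\|^2_{T^e}$ over boundary faces does not double-count elements with more than one boundary face.
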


The expression for $\varphiJ$ in \eqref{eq:varphiq} implies that $\mbf{q}(\mbf{x}) \cdot \mbf{n} =  \kap(\mbf{x}) l^{-1}(\mbf{x}) (\varphiJ(\mbf{x}) - \g)  - \delta_{\mbf{q}} (\mbf{x})$. Thus, thanks to the definition of $\what{\mbf{q}} \cdot \mbf{n}$, it follows that
	\begin{equation}\label{eq:q hat}
	\what{\mbf{q}} \cdot \mbf{n} = \kap\, l^{-1} (\varphiJ -\g )  - \delta_{\mbf{q}} + \tau (u - \what{u}) \qquad \text{ on } \Gamma_h.
	\end{equation}
The above expression can now be combined with the estimates from Lemma \ref{lem:estim-varphiq} to produce a bound for the norm of $(\mbf{q}, u-\what{u}, \varphiJ)$ in terms of the source $\mc{F}(\zeta)$ and the boundary data $\g$ as we will show next. 
\begin{lem}\label{lem:EstimateNormH} 
If Assumptions S hold, then
    \begin{equation*}
\triple{(\mbf{q}, u-\what{u}, \varphiJ)}^2 \leq 2\|\mc{F}(\zeta)\|_{\compD} \|u\|_{\compD}  +3 \|\kap^{1/2} l^{-1/2} \g \|_{\Gamma_h}^2,        
    \end{equation*}
where 
    \begin{align}\label{def:normH}
     \triple{(\mbf{v},w,\mu)} &:= \left( \| \kap^{-1/2} \mbf{v} \|_{\compD}^2 + \|\tau^{1/2} w\|_{\partial \mc{T}_h}^2 + \|\kap^{1/2} l^{-1/2} \mu \|_{\Gamma_h}^2 \right)^{1/2}.
    \end{align}	
\end{lem}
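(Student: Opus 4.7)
The plan is to derive a discrete energy identity by testing \eqref{eq:Fixed Point} with the solution itself, and then absorb the boundary contributions using the estimates from Lemma \ref{lem:estim-varphiq} and the representation \eqref{eq:q hat} of $\what{\mbf{q}}\cdot\mbf{n}$ on $\Gamma_h$.

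First, I would choose $\mbf{v}=\mbf{q}$ in \eqref{eq:Fixed Point 1}, $w=u$ in \eqref{eq:Fixed Point 2}, and $\mu=\what{u}$ in \eqref{eq:Fixed Point 4}, then add the resulting identities. Integration by parts on each element cancels the volume terms $-(u,\nabla\cdot\mbf{q})_{\mathcal T_h}-(\mbf{q},\nabla u)_{\mathcal T_h}$ against boundary contributions. After invoking the definition \eqref{eq:HDG_e} of the numerical flux and using \eqref{eq:Fixed Point 4} to eliminate the interior-face term $\pdual{\what{\mbf{q}}\cdot\mbf{n},\what{u}}_{\partial \mathcal T_h\setminus\Gamma_h}=0$, together with \eqref{eq:Fixed Point 3} to replace $\what{u}$ by $\varphiJ$ on $\Gamma_h$, the identity collapses to
\begin{equation*}
\|\kap^{-1/2}\mbf{q}\|_{\compD}^2+\|\tau^{1/2}(u-\what{u})\|_{\partial\mathcal T_h}^2+\pdual{\what{\mbf{q}}\cdot\mbf{n},\varphiJ}_{\Gamma_h}=(\mc F(\zeta),u)_{\mathcal T_h}.
\end{equation*}

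Next, I would substitute the expression \eqref{eq:q hat} for $\what{\mbf{q}}\cdot\mbf{n}$ on $\Gamma_h$ into the boundary term. This produces a positive contribution $\|\kap^{1/2}l^{-1/2}\varphiJ\|_{\Gamma_h}^2$ that completes the triple norm on the left-hand side, together with three remainder terms involving the pairings of $\varphiJ$ with $\kap\,l^{-1}\g$, $\delta_{\mbf{q}}$, and $\tau(u-\what{u})$. Moving these to the right-hand side yields
\begin{equation*}
\triple{(\mbf{q},u-\what{u},\varphiJ)}^2=(\mc F(\zeta),u)_{\mathcal T_h}+\pdual{\varphiJ,\kap\,l^{-1}\g}_{\Gamma_h}+\pdual{\varphiJ,\delta_{\mbf{q}}}_{\Gamma_h}-\pdual{\varphiJ,\tau(u-\what{u})}_{\Gamma_h}.
\end{equation*}

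Finally, I would bound the source term by Cauchy--Schwarz, $|(\mc F(\zeta),u)_{\mathcal T_h}|\le\|\mc F(\zeta)\|_{\compD}\|u\|_{\compD}$, and the three boundary pairings by the three Young-type inequalities supplied by Lemma \ref{lem:estim-varphiq}. The crucial observation is that these three estimates have been calibrated so that each absorbs exactly $\tfrac{1}{6}\|\kap^{1/2}l^{-1/2}\varphiJ\|_{\Gamma_h}^2$, and collectively $\tfrac{1}{2}\|\kap^{-1/2}\mbf{q}\|_{\compD}^2+\tfrac{1}{2}\|\tau^{1/2}(u-\what{u})\|_{\partial\mathcal T_h}^2$, leaving on the right only $\|\mc F(\zeta)\|_{\compD}\|u\|_{\compD}+\tfrac{3}{2}\|\kap^{1/2}l^{-1/2}\g\|_{\Gamma_h}^2$ and half of the triple norm. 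Multiplying through by $2$ yields the stated bound.

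The proof is essentially a bookkeeping argument, so the only subtle step is ensuring that the energy identity is assembled so that \emph{every} boundary term can be matched to one of the three specific inequalities in Lemma \ref{lem:estim-varphiq}; this is the role of the substitution based on \eqref{eq:q hat}. Without that rewriting, the pairing $\pdual{\mbf{q}\cdot\mbf{n},\varphiJ}_{\Gamma_h}$ would not expose the positive contribution $\|\kap^{1/2}l^{-1/2}\varphiJ\|_{\Gamma_h}^2$ needed to close the estimate.
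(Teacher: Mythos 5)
Your proposal is correct and follows essentially the same route as the paper: test the linearized system with $(\mbf{q},u,\mu)$ where $\mu$ is chosen to eliminate the interior-face terms and expose $\pdual{\what{\mbf{q}}\cdot\mbf{n},\varphiJ}_{\Gamma_h}$, substitute the identity \eqref{eq:q hat} to extract the positive term $\|\kap^{1/2}l^{-1/2}\varphiJ\|_{\Gamma_h}^2$, and absorb the three remaining boundary pairings with Lemma \ref{lem:estim-varphiq}. The only cosmetic difference is the bookkeeping of the test function $\mu$ (the paper takes $\mu=-\what{\mbf{q}}\cdot\mbf{n}$ on $\Gamma_h$ and $-\what{u}$ elsewhere, which yields the same energy identity).
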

\begin{proof}
Take $\zeta \in W_h$ and let $u=\mc{J}(\zeta)\in W_h$ be the corresponding solution satisfying \eqref{eq:Fixed Point}. Integrating by parts the left hand side in \eqref{eq:Fixed Point 2},  testing \eqref{eq:Fixed Point} with $\mbf{v}=\mbf{q}$, $w=u$, and
    \begin{equation*}
    \mu:= \left\{ \begin{array}{ccl}
	 - \what{\mbf{q}} \cdot \mbf{n} &  &\text{on } \Gamma_h, \\
	 - \what{u}&  &\text{on } \partial \mc{T}_h \setminus \Gamma_h  ,
	 \end{array} 
	 \right.
    \end{equation*}
and adding the resulting equalities, we get
    \begin{equation*}
    \|\kap^{-1/2}\, \mbf{q} \|^2_{\compD} + \|\tau^{1/2}\, (u-\what{u})\|^2_{\partial \mc{T}_h} = (\mc{F}(\zeta), u)_{\mc{T}_h} - \langle \varphiJ, \what{\mbf{q}}\cdot \mbf{n} \rangle_{\Gamma_h}. 
    \end{equation*}
Then, using the fact that $\what{\mbf{q}} \cdot \mbf{n} = \mbf{q} \cdot \mbf{n} + \tau (u - \what{u})$ in combination with identity \eqref{eq:q hat}, we obtain 
\[
    	\triple{(\mbf{q}, u-\what{u}, \varphiJ)}^2 \leq \|\mc{F}(\zeta)\|_{\compD}  \|u\|_{\compD} + |\pdual{\varphiJ, \delta_{\mbf{q}}}_{\Gamma_h} | + |\pdual{\varphiJ,\tau (u - \what{u})}_{\Gamma_h}| + | \langle \varphiJ, \kap\, l^{-1}\, \g) \rangle_{\Gamma_h} |.
\]	
By the Cauchy-Schwarz inequality and Lemma \ref{lem:estim-varphiq}, we arrives at
	\begin{align*}
    	\triple{(\mbf{q}, u-\what{u}, \varphiJ)}^2 \leq& \|\mc{F}(\zeta)\|_{\compD}  \|u\|_{\compD} +  \dfrac{1}{2} \|\kap^{1/2}l^{-1/2} \varphiJ\|^2_{\Gamma_h} 
    	+ \dfrac{1}{2} \|\kap^{-1/2}\mbf{q}\|_{\compD}^2 \\
    	&+ \dfrac{1}{2} \|\tau^{1/2} (u - \what{u})\|^2_{\partial \mc{T}_h} 
    	+\dfrac{3}{2} \|\kap^{1/2}l^{-1/2}\,  \g\|^2_{\Gamma_h}.
	\end{align*}
and the result follows.
\end{proof}
%
\paragraph{Proof of Lemma \ref{lem:StabilityS}.} 
%
With all the previous technical results in place we are now in a position to prove the crucial result. In the arguments below, $\boldsymbol \Pi_{\boldsymbol V}$ and $\Pi_W$ are, respectively, the $\boldsymbol V $ and $W$ components of the HDG projector introduced in the Appendix \ref{sec:HDGprojection}.	

\begin{proof}
We will proceed using an auxiliary problem  that generalizes the result of Lemma 3.3 in \cite{CoQiuSo2014} to our semi-linear case. We will consider that, given $\Theta \in L^2(\Omega)$, the solution to the auxiliary problem
	\begin{subequations}\label{eq: dual problem}
	\begin{align}
	&& \kap^{-1} \bsy{\phi}+ \nabla \psi &= 0 & &\text{in } \Omega, \label{eq: dual problem 1} \\
	&& \nabla \cdot \bsy{\phi} &= \Theta & &\text{in } \Omega, \label{eq: dual problem 2} \\
	&& \psi &= 0 & &\text{on } \partial \Omega ,\label{eq: dual problem 3}
	\end{align}
	\end{subequations}
satisfies the regularity estimate
	\begin{equation}\label{regularity dual problem}
	\| \bsy{\phi} \|_{H^1(\Omega)} + \| \psi \|_{H^2(\Omega)} \leq C_{reg} \|\Theta\|_{\Omega}.
	\end{equation}	
Then, following the argument of \cite[Lemma 4.1]{CoGoSa2010} it is possible to show that if $u$ satisfies \eqref{eq:Fixed Point} then
 \[
	(u, \Theta)_{\mc{T}_h} =  ( \kap^{-1}\ \mbf{q}, \bsy{\Pi}_{\mbf{V}} \bsy{\phi} - \bsy{\phi})_{\mc{T}_h} + \pdual{\what{u}, \bsy{\phi} \cdot \mbf{n}}_{\Gamma_h} - \pdual{\what{\mbf{q}} \cdot \mbf{n} , \psi}_{\Gamma_h} + (\mc{F}(\zeta), \Pi_W \psi)_{\mc{T}_h}.
\]
We will now use this expression to bound the norm of $u$. In order to simplify the exposition, we will group some terms on the right hand side of this expression and treat them separately.  Hence, we decompose the above expression as
    \begin{equation}\label{eq:uDotTheta}
    (u, \Theta)_{\mc{T}_h} = \md{T}_{\boldsymbol q} + \md{T}_{\mc{F}} + \md{T}_{u},
    \end{equation}
by defining
    \begin{equation*}
    \md{T}_{\boldsymbol q} := ( \kap^{-1}\ \mbf{q}, \bsy{\Pi}_{\mbf{V}} \bsy{\phi} - \bsy{\phi})_{\mc{T}_h}, \quad  \md{T}_{u} := \pdual{\what{u}, \bsy{\phi} \cdot \mbf{n}}_{\Gamma_h} - \pdual{\what{\mbf{q}} \cdot \mbf{n} , \psi}_{\Gamma_h} , \quad \text{ and } \quad \md{T}_{\mc{F}} := (\mc{F}(\zeta), \Pi_W \psi)_{\mc{T}_h}.
    \end{equation*}
The terms $\md{T}_{\mbf{q}}$ and $\md{T}_{\mathcal F}$ can be bounded by an application of the estimates \eqref{error_projector1} in combination with the elliptic regularity \eqref{regularity dual problem}, yielding
    \begin{align}\label{eq:estim_Tq_priori}
    |\md{T}_{\mbf{q}}| &\leq \underline{\kap}^{-1/2} \, \|\kap^{-1/2}\, \mbf{q}\|_{\compD} \|\bsy{\Pi}_{\mbf{V}} \bsy{\phi} - \bsy{\phi}\|_{\compD} \lesssim  \,h\, \|\kap^{-1/2}\, \mbf{q}\|_{\compD}  \|\Theta\|_{\Omega}, 
    \end{align}
and
    \begin{align}\label{eq:estim_Tf_priori}
    |\md{T}_{\mc{F}}|  &\lesssim   \|\mc{F}(\zeta)\|_{\compD}\, \|\Theta\|_{\Omega}. 
    \end{align}
The treatment of the term  $\md{T}_{u}$ requires more work. Denoting by $Id_M$ the identity operator in  $M$, considering  \eqref{eq:q hat} and equation \eqref{eq: dual problem 1}, $\md{T}_{u}$ can be written as $\md{T}_{u} = \sum_{i=1}^5 \md{T}_{u}^i$, where 
    \begin{equation*}
    \begin{array}{lll}
    \md{T}_{u}^1 := - \langle \kap l^{-1} \varphiJ ,\psi + l \partial_n \psi \rangle_{\Gamma_h}, & \quad &  \md{T}_{u}^2 := \langle\kap \varphiJ, (P_M - Id_M)\partial_n \psi \rangle_{\Gamma_h}, \\
    \md{T}_{u}^3 := \langle \delta_{\mbf{q}}, \psi \rangle_{\Gamma_h}, & \quad & \md{T}_{u}^4 := -\langle \tau(u - \what{u}) ,P_M \psi \rangle_{\Gamma_h}, \\
    \md{T}_{u}^5 := \langle\kap\,  l^{-1} \, \g, \psi \rangle_{\Gamma_h}. & \quad &
\end{array}
    \end{equation*}
We will now determine bounds for all the terms in the decomposition.

By Young's inequality and combining the fact that $l(\boldsymbol{x})\lesssim R h \,\,, \forall \, \mbf{x} \in \Gamma_h$  with estimate \eqref{est 3 para T_zh }, we have
    \begin{equation*}
	|\md{T}_{u}^1| \leq \left| \langle \kap^{1/2}\, l\, \kap^{1/2} \,l^{-1/2} \,\varphiJ , l^{-3/2}\, (\psi + l \partial_n \psi) \rangle_{\Gamma_h} \right| 
	\lesssim 
	\,  R \, h \,  \|\kap^{1/2} \,l^{-1/2}\, \varphiJ \|_{\Gamma_h} \|\Theta \|_{\Omega}.
    \end{equation*}
Analogously, we get
    \begin{equation*}
	|\md{T}_{u}^2| \lesssim 	
	\, R^{1/2}\, h\, \|\kap^{1/2} l^{-1/2} \varphiJ \|_{\Gamma_h} \|\Theta \|_{\Omega}.        
    \end{equation*}
To bound $\md{T}_{u}^3$, we employ \eqref{ineq:delta Pk}, \eqref{est 4 para T_zh }, and \eqref{eq:S4} yielding
	\begin{align*}
	|\md{T}_{u}^3|
	&\lesssim (R\, h)^{1/2} \| l^{1/2}\, \delta_{\mbf{q}} \|_{\Gamma_h} \|l^{-1} \,\psi\|_{\Gamma_h} \\ 
	&\lesssim (R\, h)^{1/2} \left( \sum_{e\in \mc{E}_h^{\partial}}  \dfrac{1}{3} r_e^{3}\,  (C_{ext}^e \,C_{inv}^e )^2 \|\mbf{q}\|^2_{T^e} \right)^{1/2}  \|\Theta\|_{\Omega}\\
	&\lesssim
    \,  R^2\, h^{1/2}\, \|\kap^{-1/2}\mbf{q}\|_{\compD} \|\Theta\|_{\Omega}.
	\end{align*}
Similarly, using \eqref{est 4 para T_zh } we can bound
\[
|\md{T}_{u}^4| \lesssim  \overline{\tau}^{1/2\,} R\, h\, \|\tau^{1/2}\, (u -\what{u})\|_{\partial \mc{T}_h} \|\Theta\|_{\Omega} \quad \text{ and } \quad |\md{T}_{u}^5| \lesssim
(Rh)^{1/2} \|\kap^{1/2}\, l^{-1/2}\, \g\|_{\Gamma_h} \|\Theta\|_{\Omega}.
\]
Taking $\Theta = u$ in $\compD$ and $\Theta = 0$ in $\compD^c$ in \eqref{eq: dual problem} and considering the bounds for the terms $\md{T}_{u}^i$, we can combine the decomposition \eqref{eq:uDotTheta} with the estimates \eqref{eq:estim_Tq_priori} and  \eqref{eq:estim_Tf_priori}, to obtain
\begin{align*}
	\|u\|_{\compD} 
	&\lesssim  h\,  \|\kap^{-1/2} \mbf{q}\|_{\compD} +   h\,  (R+R^{1/2} )\,  \| \kap^{1/2}\, l^{-1/2} \varphiJ \|_{\Gamma_h} + R^2\,   h^{1/2} \| \kap^{-1/2}\mbf{q}\|_{\compD} \\
	&\qquad + \overline{\tau}^{1/2}\,  R\, h\,  \|\tau^{1/2}(u - \what{u})\|_{\partial \mc{T}_h}  + \|\mc{F}(\zeta)\|_{\compD} + (Rh)^{1/2} \|\kap^{1/2}\, l^{-1/2}\, \g\|_{\Gamma_h} . 
	\end{align*}
Now, let     
    \begin{equation}\label{eq:alpha}
    \widetilde{c} := C\, \left\{  1+ R^2 +R+R^{1/2}+ \overline{\tau}^{1/2} \, R \right\},
    \end{equation}    
where $C>0$ is the constant hidden in the symbol $\lesssim$.
Then, since $h<1$ by Lemma \ref{lem:EstimateNormH} and Young's inequality, we infer
	\begin{align*}
	\|u\|_{\compD} &\leq  \widetilde{c} \, h^{1/2} \left( \sqrt{2}\,  \|\mc{F}(\zeta)\|_{\compD}^{1/2}  \|u \|_{\compD}^{1/2} + \sqrt{3}\,   \|\kap^{1/2}\, l^{-1/2}\,  \g\|_{\Gamma_h} \right)  + \|\mc{F}(\zeta)\|_{\compD} + \widetilde{c}\,  h^{1/2}\,   \|\kap^{1/2}\, l^{-1/2} \, \g\|_{\Gamma_h} \\
	&\leq \widetilde{c}^2 \,h \,\|\mc{F}(\zeta)\|_{\compD}  + \dfrac{1}{2} \|u\|_{\compD}  + \|\mc{F}(\zeta)\|_{\compD} +  \widetilde{c}\, (\sqrt{3}+ 1) \,h^{1/2} \,\|\kap^{1/2}\, l^{-1/2}\,  \g\|_{\Gamma_h}
	\end{align*}
and thus
    \begin{equation*}
	\|u\|_{\compD} \leq 4\, \max\{\widetilde{c}^2 \, h, 1\} \|\mc{F}(\zeta)\|_{\compD} + 2\, \widetilde{c} \, (\sqrt{3}\, +1) h^{1/2} \|\kap^{1/2}\, l^{-1/2}\,  \g\|_{\Gamma_h}. 
    \end{equation*}
\end{proof}
%
\section{\textit{A priori} error analysis}
%
We now provide the \textit{a priori} error bounds for the method. As we will see, some of the results presented in this section can be proven by using similar arguments to those of Section \ref{sec:well-posedness} and many details will be omitted. Given that the set of  assumptions \eqref{eq:S} is required to hold in order to ensure the well-posedness of the problem, in the present section they will be assumed as true and used for the error analysis without explicitly stating them in the results. Similarly, the regularity assumption \eqref{regularity dual problem} will be assumed to hold.

The total approximation error has a component due to the accuracy of the discretization, and a component due entirely to the approximation properties of the discrete subspace. This is made apparent using the HDG projection defined in \eqref{eq:HDGprojector} and defining  \textit{the projections of the errors}
\[
\bsy{\varepsilon}^{\mbf{q}}: = \bsy{\Pi}_{\mbf{V}}\mbf{q} - \mbf{q}_h \quad \text{ and } \quad \varepsilon^u:= \Pi_W u - u_h,
\]
and the \textit{error of the projections}
\[
\mbf{I}^{\mbf{q}}:= \mbf{q}-\bsy{\Pi}_{\mbf{V}}\mbf{q}  \quad \text{ and } \quad I^u:=u - \Pi_W u.
\]
Using these quantities we can decompose the error as follows
\[
\mbf{q}-\mbf{q}_h = \bsy{\varepsilon}^{\mbf{q}}+\mbf{I}^{\mbf{q}} \quad \text{ and } \quad u-u_h =  \varepsilon^u + I^u.
\]
In addition, we define $\varepsilon^{\what{u}}:= P_M u - \what{u}_h$, where we recall that $P_M$ is the $L^2$ projection into $M_h$. 

It is not difficult to show that $( \errorq ,\erroru ,\errortu )$ belongs to $\mbf{V}_h\times W_h \times M_h$ and satisfies 
	\begin{subequations}\label{eq:error projection}
	\begin{align}
	(\kap^{-1} \errorq, \mbf{v})_{\mc{T}_h} - (\erroru, \nabla \cdot \mbf{v})_{\mc{T}_h} + \langle \errortu, \mbf{v} \cdot \mbf{n} \rangle _{\partial \mc{T}_h} &=  -(\kap^{-1}  \mbf{I}^{\mbf{q}},\mbf{v})_{\mc{T}_h}, \label{eq:error projection_a}  \\
	-(\errorq, \nabla w)_{\mc{T}_h} + \langle \errortq \cdot \mbf{n},w \rangle_{\partial \mc{T}_h} &=  (\mc{F}(u)-\mc{F}(u_h),w)_{\mc{T}_h}, \label{eq:error projection_b}\\
	\langle \errortu,\mu \rangle_{\Gamma_h} &=  \langle \varphi - \varphi_h ,\mu \rangle_{\Gamma_h},  \label{eq:error projection_c}\\
	\langle\errortq \cdot \mbf{n}, \mu \rangle_{\partial \mc{T}_h \setminus \Gamma_h} &= 0, \label{eq:error projection_d}
	\end{align}
	\end{subequations}
for all $(\mbf{v}, w, \mu)\in \mbf{V}_h \times W_h \times M_h$, where $\errortq \cdot \mbf{n} := \errorq \cdot \mbf{n} +  \tau(\erroru - \errortu ) = P_M(\mbf{q} \cdot \mbf{n}) - \what{\mbf{q}}_h\cdot \mbf{n}$. This \textit{error equations} will help us establishing two results that will eventually lead to the proof of the convergence of the method.

To abbreviate the notation in the following arguments it will be useful to define
    \begin{subequations}\label{def:lambda}
    \begin{eqnarray}
    \projerrorq &: =&  \left( \| \mbf{I}^{\mbf{q}}\|^2_{\compD} +  \| h^{\perp}\, \partial_n (\mbf{I}^{\mbf{q}} \cdot \mbf{n})\|^2_{\Omega_h^c}  + \| (h^{\perp})^{1/2}\, \mbf{I}^{\mbf{q}} \cdot \mbf{n})\|^2_{\Gamma_h} \right)^{1/2}, \label{def:lambda_q}\\
    \projerroru &: =&  \left( \|(h^{\perp})^{1/2} \,I^u \|_{\Gamma_h}  + \|I^u\|_{\Omega_h} \right)^{1/2}. \label{def:lambda_u}
    \end{eqnarray}
    \end{subequations}
With respect to these quantities we point out that, if $\mbf{q}\in \mbf{H}^{k+1}(\Omega)$, $u\in H^{k+1}(\Omega)$ and $\tau = \mathcal O(1)$ then, by scaling arguments and the properties \eqref{eq:projection_error}, both $\projerrorq$ and $\projerroru$ are of order $h^{k+1}$.

The first of these auxiliary lemmas establishes the convergence of the discrete flux $\boldsymbol q_h$, the restriction to the mesh skeleton $\widehat u_h$, and the transferred boundary data $\varphi_h$ as a consequence of the convergence of the primary scalar variable $u_h$ and the errors of the projections $I^u$ and $\boldsymbol I^{\boldsymbol q}$.

\begin{lem}\label{lem:estim-priori error}
Let $\triple{\cdot}$ be the norm defined in \eqref{def:normH}. There exists a positive constant C>0, independent of $h$, such that
	\begin{equation}\label{ineq:estim-priori error}
	\triple{ (\errorq ,\erroru - \errortu, \varphi - \varphi_h)}^2 \leq 4\,  L (\| \erroru\|_{\Omega_h} + \|I^u\|_{\Omega_h})\, \|\erroru\|_{\compD} + C\,  \projerrorq^2.
	\end{equation}	
\end{lem}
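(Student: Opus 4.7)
The plan is to mimic the energy argument used in the proof of Lemma \ref{lem:EstimateNormH}, applied to the error equations \eqref{eq:error projection} instead of the linearized system \eqref{eq:Fixed Point}. First, I would test \eqref{eq:error projection_a} with $\mbf{v}=\errorq$, \eqref{eq:error projection_b} with $w=\erroru$, \eqref{eq:error projection_c} with $\mu=-\errortq\cdot\mbf{n}$, and \eqref{eq:error projection_d} with $\mu=-\errortu$, then add the four identities. An integration by parts on the volume terms and use of $\errortq\cdot\mbf{n}=\errorq\cdot\mbf{n}+\tau(\erroru-\errortu)$ should cancel the jump terms on $\partial\mc{T}_h\setminus\Gamma_h$ and produce the stability seed $\|\kap^{-1/2}\errorq\|^2_{\compD}+\|\tau^{1/2}(\erroru-\errortu)\|^2_{\partial\mc{T}_h}$ on the left, with right-hand side
\[
-(\kap^{-1}\mbf{I}^{\mbf{q}},\errorq)_{\mc{T}_h}+(\mc{F}(u)-\mc{F}(u_h),\erroru)_{\mc{T}_h}-\langle\varphi-\varphi_h,\errortq\cdot\mbf{n}\rangle_{\Gamma_h}.
\]

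Next, I would generate the missing third piece of $\triple{\cdot}$ on the left by manipulating the boundary term, just as was done for $\what{\mbf{q}}\cdot\mbf{n}$ in \eqref{eq:q hat}. Writing $\mbf{q}-\mbf{q}_h=\mbf{I}^{\mbf{q}}+\errorq$, using Assumption \eqref{eq:S1}, and invoking the definition of $\delta_{\cdot}$ in \eqref{def:delta} yields
\[
\errortq\cdot\mbf{n}\;=\;\kap\,l^{-1}(\varphi-\varphi_h)-\delta_{\mbf{I}^{\mbf{q}}}-\delta_{\errorq}-\mbf{I}^{\mbf{q}}\cdot\mbf{n}+\tau(\erroru-\errortu)\qquad\text{on }\Gamma_h,
\]
so that $-\langle\varphi-\varphi_h,\errortq\cdot\mbf{n}\rangle_{\Gamma_h}$ contributes $-\|\kap^{1/2}l^{-1/2}(\varphi-\varphi_h)\|^2_{\Gamma_h}$, which when moved to the left completes $\triple{(\errorq,\erroru-\errortu,\varphi-\varphi_h)}^2$, plus four residual boundary pairings that need to be absorbed.

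The final step is the routine but bookkeeping-heavy absorption. Cauchy--Schwarz and Young inequalities handle $(\kap^{-1}\mbf{I}^{\mbf{q}},\errorq)_{\mc{T}_h}$, feeding $\|\mbf{I}^{\mbf{q}}\|^2_{\compD}$ into $\projerrorq^2$. The Lipschitz bound \eqref{eq:Lipschitz} together with $u-u_h=\erroru+I^u$ gives $|(\mc{F}(u)-\mc{F}(u_h),\erroru)_{\mc{T}_h}|\leq L(\|\erroru\|_{\Omega_h}+\|I^u\|_{\Omega_h})\|\erroru\|_{\compD}$, producing the first term on the right of \eqref{ineq:estim-priori error}. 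The three boundary pairings with $\delta_{\errorq}$, $\tau(\erroru-\errortu)$ and $\delta_{\mbf{I}^{\mbf{q}}}$, $\mbf{I}^{\mbf{q}}\cdot\mbf{n}$ are bounded exactly as in Lemma \ref{lem:estim-varphiq}: the two polynomial pieces use \eqref{ineq:delta Pk} together with Assumptions \eqref{eq:S3}--\eqref{eq:S4} to absorb a fraction of $\|\kap^{-1/2}\errorq\|^2_{\compD}+\|\tau^{1/2}(\erroru-\errortu)\|^2_{\partial\mc{T}_h}$ and $\tfrac16\|\kap^{1/2}l^{-1/2}(\varphi-\varphi_h)\|^2_{\Gamma_h}$ on the left. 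The two non-polynomial pieces require the $H^1$-version \eqref{ineq:delta H1} applied to $\mbf{I}^{\mbf{q}}$ (yielding $\|h^\perp\partial_n(\mbf{I}^{\mbf{q}}\cdot\mbf{n})\|^2_{\Omega_h^c}$) and a straightforward trace estimate (yielding $\|(h^\perp)^{1/2}\mbf{I}^{\mbf{q}}\cdot\mbf{n}\|^2_{\Gamma_h}$), both of which are precisely the remaining contributions to $\projerrorq^2$.

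The main technical obstacle I anticipate is the split treatment of $\delta_{\mbf{q}-\mbf{q}_h}=\delta_{\errorq}+\delta_{\mbf{I}^{\mbf{q}}}$: the discrete part must be controlled by \eqref{ineq:delta Pk} so it can be absorbed against $\|\kap^{-1/2}\errorq\|^2_{\compD}$ using \eqref{eq:S4}, while the projection-error part requires \eqref{ineq:delta H1} and contributes an extraneous $\|h^\perp\partial_n(\mbf{I}^{\mbf{q}}\cdot\mbf{n})\|_{\Omega_h^c}$ term that is not present in the well-posedness argument and is the reason for the non-standard definition of $\projerrorq$ in \eqref{def:lambda_q}. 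The other subtle point is making sure that the $\tau$-weighted pairing $\langle\varphi-\varphi_h,\tau(\erroru-\errortu)\rangle_{\Gamma_h}$ is absorbed using \eqref{eq:S3} rather than an uncontrolled constant; otherwise everything is a careful Cauchy--Schwarz and Young's inequality combination, entirely analogous to Lemma \ref{lem:estim-varphiq}.
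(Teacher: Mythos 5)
Your proposal is correct and follows essentially the same route as the paper: the same choice of test functions in the error equations, the same identity $\errortq\cdot\mbf{n}=\kap\,l^{-1}(\varphi-\varphi_h)-\delta_{\errorq}-\delta_{\mbf{I}^{\mbf{q}}}-\mbf{I}^{\mbf{q}}\cdot\mbf{n}+\tau(\erroru-\errortu)$ on $\Gamma_h$, and the same split of the boundary pairings into polynomial pieces absorbed via \eqref{ineq:delta Pk} with \eqref{eq:S3}--\eqref{eq:S4} and projection-error pieces bounded via \eqref{ineq:delta H1} and a weighted Young inequality (the paper uses $l\lesssim r_e h_e^\perp$ directly rather than a trace estimate for the $\mbf{I}^{\mbf{q}}\cdot\mbf{n}$ term, but this is a cosmetic difference). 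You also correctly identified the role of the non-standard definition of $\projerrorq$ as the receptacle for the extraneous $\|h^\perp\partial_n(\mbf{I}^{\mbf{q}}\cdot\mbf{n})\|_{\Omega_h^c}$ and $\|(h^\perp)^{1/2}\mbf{I}^{\mbf{q}}\cdot\mbf{n}\|_{\Gamma_h}$ contributions.
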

\begin{proof}
Testing \eqref{eq:error projection} with
$	\mbf{v}: = \errorq $,  $w:= \erroru$ and $\mu:= \left\{ \begin{array}{rcl}
	 - \errortq \cdot \mbf{n} &  &\text{on } \Gamma_h \\
	 -\erroru&  &\text{on } \partial \mc{T}_h \setminus \Gamma_h  
	 \end{array} 
	 \right.
	$,
results in
\[
    \|\kap^{-1/2}\, \errorq \|^2_{\compD} + \| \tau^{1/2}\, (\erroru- \errortu) \|_{\partial \mc{T}_h} = - (\kap^{-1}\, \mbf{I}^{\mbf{q}}, \errorq)_{\mc{T}_h} + (\mc{F}(u) - \mc{F}(u_h), \erroru)_{\mc{T}_h} - \langle \varphi - \varphi_h, \errortq \cdot \mbf{n} \rangle_{\Gamma_h},
\]
then, owing to \eqref{eq:q hat}, we readily obtain $\errortq \cdot \mbf{n} = \kap \, l^{-1}\, (\varphi-\varphi_h) - \delta_{\errorq} - \delta_{\mbf{I}^{\mbf{q}}} - \mbf{I}^{\mbf{q}} \cdot \mbf{n} + \tau(\erroru - \errortu)$ on $\Gamma_h$. Substituting this above, we get 
    \begin{equation}\label{ineq:estim-priori error 1}
        \begin{array}{c}
        \triple{(\errorq, \erroru - \errortu, \varphi-\varphi_h)}^2  \leq |(\kap^{-1}\, \mbf{I}^{\mbf{q}},  \errorq)_{\mc{T}_h} | + L (\|\erroru\|_{\compD} + \|I^u\|_{\compD} ) \, \|\erroru\|_{\compD}  \\
        + |\langle \varphi- \varphi_h,  \delta_{\errorq} + \delta_{\mbf{I}^{\mbf{q}}} + \mbf{I}^{\mbf{q}} \cdot \mbf{n} - \tau(\erroru - \errortu) \rangle_{\Gamma_h}| .
    \end{array}
    \end{equation}
The estimates in  Lemma \ref{lem:estim-varphiq}, can be applied to the last term of \eqref{ineq:estim-priori error 1} to arrive at
	\begin{equation}\label{ineq:estim-priori error aux}
	\begin{array}{rcl}
	\langle \varphi - \varphi_h, \delta_{\mbf{I}^{\mbf{q}}} \rangle_{\Gamma_h}  &\leq& \displaystyle \tfrac{1}{6}\,  \| \kap^{1/2}\, l^{-1/2} \, (\varphi - \varphi_h)\|^2_{\Gamma_h} + \tfrac{1}{2}\,  \underline{\kappa}^{-1}  \, \max_{e\in \mathcal{E}_h^\partial} \{ r_e^2\}  \| h^{\perp} \partial_n (\mbf{I}^{\mbf{q}} \cdot \mbf{n})\|^2_{\Omega_h^c} , \\
     \langle \varphi - \varphi_h, \delta_{\errorq} \rangle _{\Gamma_h} &\leq& \tfrac{1}{4} \| \kap^{1/2}\, l^{-1/2}\, ( \varphi - \varphi_h)\|^2_{\Gamma_h} + \tfrac{1}{3} \,  \| \kap^{-1/2}\, \errorq \|^2_{\Omega_h}, \\
    \langle \varphi - \varphi_h, \mbf{I}^{\mbf{q}} \cdot \mbf{n}  \rangle_{\Gamma_h} &\leq& \displaystyle \tfrac{1}{6}\,  \| \kap^{1/2}\, l^{-1/2}\, ( \varphi - \varphi_h)\|^2_{\Gamma_h} + \tfrac{3}{2}\,  \underline{\kappa}^{-1}\, \max_{e\in \mathcal{E}_h^\partial} \{r_e\} \, \| (h^{\perp})^{1/2} \, \mbf{I}^{\mbf{q}} \cdot \mbf{n})\|^2_{\Gamma_h} , \\
   |\langle \varphi - \varphi_h, \tau(\erroru - \errortu)\rangle _{\Gamma_h}|  &\leq&  \tfrac{1}{6} \, \| \kap^{1/2\, }l^{-1/2} \, ( \varphi - \varphi_h)\|^2_{\Gamma_h} + \tfrac{1}{2} \, \| \tau^{1/2}\, (\erroru - \errortu)\|^2_{\partial \mc{T}_h}. 
   \end{array}
	\end{equation}
The estimate  \eqref{ineq:estim-priori error} is obtained with $C:= 4\, \kap^{-1}\, \max \{1, \frac{1}{2}\, R^2, \frac{3}{2} \, R\}$,  applying Young's inequality to term $| (\kap^{-1}\, \mbf{I}^{\mbf{q}}, \errorq)_{\mc{T}_h}|$ and the estimates given in \eqref{ineq:estim-priori error aux} .
\end{proof}

Due to the previous result, it is enough to show the convergence of $\erroru$ to guarantee the convergence of the method. The next step then is to estimate $\|\varepsilon^u\|_{\Omega_h}$, which we will do through a duality argument very much in the spirit of the proof of Lemma \ref{lem:StabilityS}. Indeed, given $\Theta \in L^2(\Omega)$, and considering the linear auxiliary problem \eqref{eq: dual problem}, but now using equations \eqref{eq:error projection} instead of \eqref{eq:Fixed Point}, we can decompose 
	\begin{equation}\label{eq:dual_identity}
	(\erroru, \Theta)_{\mc{T}_h} = \md{T}^{\mc{F}} + \md{T}^{\mbf{q}} + \md{T}^{u}
	\end{equation}
where
\begin{align*}
\md{T}^{\mc{F}}:=\,& (\mc{F}(u) - \mc{F}(u_h), \Pi_W \psi)_{\mc{T}_h}, \\
\md{T}^{\boldsymbol q} :=\,&(\kap^{-1} (\mbf{q}-\mbf{q}_h), \bsy{\Pi}_{\mbf{V}} \bsy{\phi})_{\mc{T}_h} + (\errorq,\nabla \psi )_{\mc{T}_h}, \\
\md{T}^{u} :=\,& \langle \errortu, \bsy{\phi}\cdot \mbf{n}\rangle_{\Gamma_h} -\langle \errortq \cdot \mbf{n}, \psi\rangle_{\Gamma_h}.
\end{align*}
In order to estimate the size of $\erroru$ we will now treat each of these terms separately. The term $\md{T}^{\mc{F}}$ is easy to bound, since
    \begin{equation}\label{eq:BoundErrorComponentA}
    |\md{T}^{\mc{F}}| \leq L \|u - u_h\|_{\Omega_h} \|\Pi_W \psi\|_{\Omega_h} \leq L\left(\|\varepsilon^u \|_{\Omega_h} + \|I^u\|_{\Omega_h}\right)\|\Pi_W \psi\|_{\Omega_h} \lesssim L\left(\|\varepsilon^u \|_{\Omega_h} + \|I^u\|_{\Omega_h}\right)\|\Theta\|_{\Omega}.
    \end{equation}
Now, by adding and subtracting  $ (\kap^{-1}(\mbf{q}-\mbf{q}_h), \bsy{\phi} )_{\mc{T}_h}$ in the definition of the term $\md{T}^{\boldsymbol q}$, we obtain
    \begin{equation*}
    \md{T}^{\boldsymbol q} = (\kap^{-1}(\mbf{q}-\mbf{q}_h), \bsy{\Pi}_{\mbf{V}} \bsy{\phi} -\bsy{\phi})_{\mc{T}_h} + (\kap^{-1}(\mbf{q}-\mbf{q}_h), \bsy{\phi} )_{\mc{T}_h}+ (\bsy{\varepsilon}^{\mbf{q}},\nabla \psi )_{\mc{T}_h}.
    \end{equation*}
However, due to \eqref{eq: dual problem 1}, it holds that $(\kap^{-1}(\mbf{q}-\mbf{q}_h), \bsy{\phi})_{\mc{T}_h}+ (\bsy{\varepsilon}^{\mbf{q}},\nabla \psi )_{\mc{T}_h}=-(\mbf{I}^{\mbf{q}},\nabla \psi)_{\mc{T}_h}$. Let $\psi_h\in W_h$ be arbitrary. Then, by \eqref{properties projector Pi_w}, we have $(\mbf{I}^{\mbf{q}},\nabla \psi_h ) =0$. Combining these last two facts we obtain
	\begin{equation*}
		\md{T}^{\boldsymbol q} = (\kap^{-1}(\mbf{q}-\mbf{q}_h), \bsy{\Pi}_{\mbf{V}} \bsy{\phi} -\bsy{\phi})_{\mc{T}_h} + (\mbf{I}^{\mbf{q}},\nabla ( \psi -\psi_h))_{\mc{T}_h}.
    \end{equation*}
Therefore, by choosing $\psi_h=\Pi_W \psi$, it follows that
    \begin{align}
    \nonumber
		|\md{T}^{\boldsymbol q}|\leq\,& \|\kappa ^{-1/2}(\errorq + \boldsymbol I^{\boldsymbol q})\|_{\Omega_h}\|\bsy{\Pi}_{\mbf{V}} \bsy{\phi} -\bsy{\phi}\|_{\Omega_h} + \|\boldsymbol I^{\boldsymbol q}\|_{\Omega_h}\|\nabla ( \psi -\psi_h)\|_{\Omega_h} \\
		\label{eq:tq}
		\lesssim\,& h^{\min\{1,k\}} \|\kappa ^{-1/2}\errorq\|_{\compD} \|\Theta\|_{\Omega}+h^{\min\{1,k\}} \|\mbf{I}^{\mbf{q}}\|_{\compD} \|\Theta\|_{\Omega}
	\end{align}
where we have used the elliptic regularity for the auxiliary problem, the approximation properties \eqref{eq:HDGprojector} and \eqref{eq:projection_error} of the HDG projector. 
Finally, we can further decompose $\md{T}^{u} := \sum_{i=1}^7 \md{T}^{u}_i$, where:
    \begin{equation*}
    \begin{array}{lll}
	\md{T}^{u}_1 := -\langle \kap l^{-1} (\varphi - \varphi_h), \psi + l \partial_n \psi \rangle_{\Gamma_h}, & \quad &
	\md{T}^{u}_2 := -\pdual{\kap(\varphi - \varphi_h), (Id_M - P_M)\partial_n \psi }_{\Gamma_h}, \\
	\md{T}^{u}_3 := \pdual{\delta_{\mbf{I}^q}, \psi}_{\Gamma_h}, & \quad &
	\md{T}^{u}_4 := \pdual{\mbf{I}^{\mbf{q}} \cdot \mbf{n}, (Id_M-P_M) \psi }_{\Gamma_h}, \\
	\md{T}^{u}_5 := -\pdual{\tau P_M I^u, \psi}_{\Gamma_h}, & \quad &
	\md{T}^{u}_6 := \pdual{\delta_{\bsy{\varepsilon}^{\mbf{q}} }, \psi}_{\Gamma_h}, \\
	\md{T}^{u}_7 := -\langle\tau(\erroru - \errortu), P_M\psi \rangle _{\Gamma_h}. & \quad &
	\end{array}
    \end{equation*}
Bounding separately each of the terms above it is possible to estimate $\|\varepsilon^u\|_{\mathcal T_h}$, as we show below.

\begin{lem}\label{lem:estim Eu}
Assume that the Lipschitz constant is such that $L$ is small enough, and consider the discrete spaces to be of polynomial degree $k\geq 1$. Then, 
\begin{align}\label{eq:estim Eu}
	\|\erroru\|_{\compD} \lesssim  ((Rh)^{1/2}( 1 + \overline{\tau}^{1/2})+h) \triple{(\bsy{\varepsilon}^{\mbf{q}}, \varepsilon^u - \varepsilon^{\what{u}} ,\varphi - \varphi_h)} 
	+ (Rh^{1/2}+L) (\projerrorq +  \projerroru).
	\end{align}
\end{lem}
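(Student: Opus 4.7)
The proof is another duality argument, parallel in structure to that of Lemma \ref{lem:StabilityS} but starting from the error identity \eqref{eq:dual_identity}. The plan is to take $\Theta$ equal to $\erroru$ on $\compD$ and zero elsewhere, so that $\|\Theta\|_\Omega = \|\erroru\|_{\compD}$ and the left-hand side of \eqref{eq:dual_identity} becomes $\|\erroru\|_{\compD}^2$. The task is then to bound each of $\md{T}^{\mc F}$, $\md{T}^{\mbf q}$, and the seven summands $\md{T}^u_i$ by a suitable multiple of $\|\erroru\|_{\compD}$ times either $\triple{(\errorq,\erroru-\errortu,\varphi-\varphi_h)}$ or $\projerrorq+\projerroru$, with prefactors consistent with \eqref{eq:estim Eu}.

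The bounds \eqref{eq:BoundErrorComponentA} and \eqref{eq:tq} already stated in the text dispose of $\md{T}^{\mc F}$ and $\md{T}^{\mbf q}$. The first produces a contribution $L\|\erroru\|_{\compD}^2$ that will be absorbed into the left-hand side once $L$ is small enough, plus a contribution $L\|I^u\|_{\compD}\|\erroru\|_{\compD}\leq L\projerroru\|\erroru\|_{\compD}$. The second yields $h\,(\|\kap^{-1/2}\errorq\|_{\compD}+\|\mbf I^{\mbf q}\|_{\compD})\|\erroru\|_{\compD}$, the first piece feeding the $h\,\triple{\cdot}$ summand of \eqref{eq:estim Eu} and the second feeding $h\,\projerrorq$ (noting $h\lesssim Rh^{1/2}$ for $h<1$ and $R$ of order one).

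Each of the seven boundary terms $\md{T}^u_i$ is then estimated in close analogy to the corresponding $\md{T}_u^j$ of the stability proof. For $\md{T}^u_1$ and $\md{T}^u_2$ I apply Cauchy--Schwarz on $\Gamma_h$ together with trace estimates on $l^{-3/2}(\psi+l\partial_n\psi)$ and on $(Id_M-P_M)\partial_n\psi$, producing factors of order $Rh$ and $R^{1/2}h$ respectively multiplying $\|\kap^{1/2}l^{-1/2}(\varphi-\varphi_h)\|_{\Gamma_h}\|\Theta\|_\Omega$. The term $\md{T}^u_6$, involving $\delta_{\errorq}$, is handled exactly as $\md{T}_u^3$ in the stability proof via the polynomial estimate \eqref{ineq:delta Pk} and Assumption \eqref{eq:S4}, yielding $R^2 h^{1/2}\|\kap^{-1/2}\errorq\|_{\compD}\|\Theta\|_\Omega$. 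In contrast, $\md{T}^u_3$ involves $\delta_{\mbf I^{\mbf q}}$ where $\mbf I^{\mbf q}$ is not piecewise polynomial; here the $H^1$ variant \eqref{ineq:delta H1}, together with \eqref{def:lambda_q}, yields a bound of order $Rh^{1/2}\projerrorq\|\Theta\|_\Omega$. The remaining terms $\md{T}^u_4$, $\md{T}^u_5$ and $\md{T}^u_7$ are controlled by standard trace and $L^2$-projection estimates on $\psi$ combined with the definitions \eqref{def:lambda_q} and \eqref{def:lambda_u}; it is $\md{T}^u_7$ that introduces the $\overline\tau^{1/2}$ factor in the final bound, due to the $\tau^{1/2}$ weighting of $\erroru-\errortu$ inside the triple norm.

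Collecting all nine bounds, absorbing the $L\|\erroru\|_{\compD}^2$ contribution from $\md{T}^{\mc F}$ into the left-hand side, and dividing by $\|\erroru\|_{\compD}$ delivers \eqref{eq:estim Eu}. The principal obstacle is careful bookkeeping: ensuring that the right combination of $R$, $h$, $\overline\tau$ and $L$ multiplies each summand, and correctly identifying which contributions belong to the triple-norm factor $(Rh)^{1/2}(1+\overline\tau^{1/2})+h$ versus the projection-error factor $Rh^{1/2}+L$. A more delicate point is to apply the correct variant of the $\delta$-bound---the polynomial version \eqref{ineq:delta Pk} for $\errorq$ but the $H^1$ version \eqref{ineq:delta H1} for $\mbf I^{\mbf q}$---since both kinds of term appear and mixing them would destroy the scaling claimed by the lemma.
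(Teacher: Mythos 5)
Your proposal is correct and follows essentially the same route as the paper: the duality identity \eqref{eq:dual_identity} with $\Theta=\erroru$ on $\compD$ and zero outside, the bounds \eqref{eq:BoundErrorComponentA} and \eqref{eq:tq} for $\md{T}^{\mc F}$ and $\md{T}^{\boldsymbol q}$, term-by-term estimates of $\md{T}^u_1,\dots,\md{T}^u_7$ via Lemma \ref{lem:aux Tu}, $l\lesssim Rh$ and the two variants of \eqref{ineq:deltav}, followed by absorption of the $L\|\erroru\|_{\compD}^2$ contribution for $L$ small. Your explicit distinction between applying \eqref{ineq:delta Pk} to $\errorq$ and \eqref{ineq:delta H1} to $\mbf I^{\mbf q}$ is exactly the point the paper handles (implicitly) in its bounds for $\md{T}^u_6$ and $\md{T}^u_3$.
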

\begin{proof}
By applying Young's inequality to each term in the decomposition of $\md{T}^{u}$, considering the estimates in Lemma \ref{lem:aux Tu},  using the fact $l(\boldsymbol{x})\lesssim R h \,\,, \forall \boldsymbol{x} \in \Gamma_h$ and having in mind the estimates in \eqref{ineq:deltav}, it is possible to deduce:
    \begin{equation*}
    \begin{array}{lll}
	|\md{T}^{u}_1| \lesssim \overline{\kap}^{1/2}\,  R\,  h\, \| \kap^{1/2}\, l^{-1/2} \, ( \varphi - \varphi_h) \|_{\Gamma_h} \|\Theta\|_{\Omega}, & \quad &
	|\md{T}^{u}_5| \lesssim \overline{\tau} \,R \, h^{1/2} \|(h^{\perp})^{1/2} I^u \|_{\Gamma_h} \|\Theta\|_{\Omega}, \\
	|\md{T}^{u}_2| \lesssim \overline{\kap}^{1/2} \, R^{1/2} \, h \, \| \kap^{1/2}\, l^{-1/2} \, ( \varphi - \varphi_h) \|_{\Gamma_h} \|\Theta\|_{\Omega}, & \quad &
	|\md{T}^{u}_6| \lesssim \overline{\kappa}^{1/2} \, R^2\, h^{1/2} \, \| \kap^{-1/2}\,  \errorq  \|_{\Omega_h} \|\Theta\|_{\Omega},\\
	|\md{T}^{u}_3| \lesssim  R^{3/2}\, h^{1/2} \|h^{\perp} \partial_n \mbf{I}^{\mbf{q}} \cdot \mbf{n} \|_{\Gamma_h} \|\Theta\|_{\Omega}, & \quad &
	|\md{T}_{u}^7| \lesssim  \overline{\tau}^{1/2}  R h \| \tau^{1/2}(\erroru - \errortu ) \|_{\Gamma_h} \|\Theta\|_{\Omega}.\\
	|\md{T}^{u}_4| \lesssim h \|(h^{\perp})^{1/2} \mbf{I}^{\mbf{q}} \cdot \mbf{n} \|_{\Gamma_h} \|\Theta\|_{\Omega}, &&
\end{array}
\end{equation*}
Then, recalling the definition of the norm $\triple{\cdot}$ in \eqref{def:normH}, and of the terms $\projerrorq$ and $\projerroru$  in \eqref{def:lambda}, we get
    \begin{equation}\label{eq:estim Eu 1}
    \begin{array}{rl}
	|\md{T}^{u}| &\lesssim  \bigg(\overline{\kappa}^{1/2} \, R h + \overline{\kappa}^{1/2}\, R^{1/2}\, h + \overline{\kappa}^{1/2} R^2\, h^{1/2} + \overline{\tau}^{1/2}  R h \bigg) \triple{(\bsy{\varepsilon}^{\mbf{q}}, \varepsilon^u - \varepsilon^{\what{u}} ,\varphi - \varphi_h)} \,\,  \|\Theta\|_{\Omega}\\
	&\quad + \max\{R^{1/2}, \overline{\tau}\, \}\, Rh^{1/2}\, (\projerroru + \projerrorq) \, \|\Theta\|_{\Omega}
 + h\projerrorq \|\Theta\|_{\Omega}.
	\end{array} 
	\end{equation}	
Finally, taking $\Theta = \erroru$ in $\compD$ and $\Theta = 0$ in $\compD^c$ in \eqref{eq:dual_identity} and using the estimates  \eqref{eq:BoundErrorComponentA}, \eqref{eq:tq} and  \eqref{eq:estim Eu 1}, and considering assumption \eqref{eq:S3}, the  estimate \eqref{eq:estim Eu} is obtained.
\end{proof}
	
Combining Lemmas \ref{lem:estim-priori error} and \ref{lem:estim Eu}, we can bound the error in terms of the error of the projection $I^u$ and $\boldsymbol I^{\boldsymbol q}$ as we do below.	
	
\begin{thm}\label{thm:estim normH eroor}
Assume that $6\, L\, \Big( (R\, h)^{1/2} \, (1+\overline{\tau}^{1/2}) + h\Big)  < 1$, $\tau$ is of order one, and the discrete spaces are of polynomial degree $k\geq 1$, then 
    \begin{subequations}
	\begin{eqnarray}
    \triple{(\bsy{\varepsilon}^{\mbf{q}}, \varepsilon^u - \varepsilon^{\what{u}} ,\varphi - \varphi_h)} \lesssim \projerrorq + \projerroru \label{ineq:estim normH eroor} 	
	\end{eqnarray}
and 
	\begin{eqnarray}\label{ineq:Eu}
	\|\erroru\|_{\compD} \lesssim \left((Rh)^{1/2}+L+h\right) (\projerroru + \projerrorq).
	\end{eqnarray}
 \end{subequations}	
\end{thm}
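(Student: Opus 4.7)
The plan is to combine Lemmas \ref{lem:estim-priori error} and \ref{lem:estim Eu} in the natural order: use the flux/trace bound from Lemma \ref{lem:estim-priori error} to control the triple norm in terms of $\|\erroru\|_{\compD}$ and the projection errors, then use Lemma \ref{lem:estim Eu} to control $\|\erroru\|_{\compD}$ in terms of the triple norm and the projection errors. This creates a coupled system in the unknowns $T:=\triple{(\bsy{\varepsilon}^{\mbf{q}}, \varepsilon^u - \varepsilon^{\what{u}} ,\varphi - \varphi_h)}$ and $E:=\|\erroru\|_{\compD}$ that is closed up using Young's inequality and the smallness hypothesis.

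First I would introduce the abbreviations $A:=(Rh)^{1/2}(1+\overline{\tau}^{1/2})+h$, $B:=Rh^{1/2}+L$, and $\Lambda:=\projerrorq+\projerroru$, so that Lemma \ref{lem:estim Eu} reads $E\lesssim AT+B\Lambda$ while Lemma \ref{lem:estim-priori error} reads
\[
T^2 \;\leq\; 4L\bigl(E+\|I^u\|_{\compD}\bigr)\,E + C\,\projerrorq^{\,2}.
\]
Substituting the first bound into the second, using $\|I^u\|_{\compD}\lesssim \Lambda$, and expanding gives $T^2\lesssim L\,A^2T^2+L\,AB\,T\Lambda+L\,B^2\Lambda^2+L\,A\,T\,\Lambda+L\,B\Lambda^{2}+\Lambda^{2}$, where all cross terms $T\Lambda$ are then split by Young's inequality in the form $L A T\Lambda\leq \tfrac12 L A^2 T^2 + \tfrac12 L\Lambda^2$ (and similarly for the $LAB$ term, noting $B\lesssim 1$). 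After collecting, one obtains
\[
T^{2}\;\leq\;c_{1}\,L\,A^{2}\,T^{2}+c_{2}\,\Lambda^{2}
\]
for absolute constants $c_{1},c_{2}$. The hypothesis $6L\bigl((Rh)^{1/2}(1+\overline\tau^{1/2})+h\bigr)<1$ ensures $c_{1}LA^{2}<1/2$ (after tracking the implicit constants inherited from Lemma \ref{lem:estim Eu}), so the $T^{2}$ term on the right can be absorbed on the left, yielding $T\lesssim \Lambda$, which is \eqref{ineq:estim normH eroor}.

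For the second estimate \eqref{ineq:Eu}, I would simply substitute the bound $T\lesssim \Lambda$ just obtained into Lemma \ref{lem:estim Eu}, giving
\[
E \;\lesssim\; A\,\Lambda + B\,\Lambda \;\lesssim\; \bigl((Rh)^{1/2}+L+h\bigr)\Lambda,
\]
where I have used $\overline\tau=\mathcal O(1)$ to absorb the $(1+\overline\tau^{1/2})$ factor into the hidden constant, and $Rh^{1/2}\lesssim (Rh)^{1/2}$ since $R\lesssim 1$ by Assumption \eqref{eq:S2}.

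The main obstacle will be the careful bookkeeping in the absorption step: the constant hidden in the $\lesssim$ of Lemma \ref{lem:estim Eu} enters the coefficient of $LA^{2}$ after squaring, so the factor $6$ in the smallness hypothesis is not arbitrary but calibrated to that constant together with the $4$ of Lemma \ref{lem:estim-priori error} and the $1/2$ loss from the Young splitting. One needs to verify that after all Young's inequalities, the coefficient of $T^{2}$ on the right is strictly less than $1$. A secondary nuisance is that $\|I^u\|_{\compD}$ and $\projerroru$ are not literally equal (the latter is a $1/2$-power sum of mixed terms), but $\|I^u\|_{\compD}\lesssim \Lambda$ holds up to constants independent of $h$, which is all that is needed.
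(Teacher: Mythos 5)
Your proposal follows exactly the paper's argument: feed the bound of Lemma \ref{lem:estim Eu} for $\|\erroru\|_{\compD}$ into the quadratic estimate of Lemma \ref{lem:estim-priori error}, split the cross terms with Young's inequality, absorb the resulting multiple of $\triple{(\bsy{\varepsilon}^{\mbf{q}}, \varepsilon^u - \varepsilon^{\what{u}} ,\varphi - \varphi_h)}^2$ using the smallness hypothesis $6L\big((Rh)^{1/2}(1+\overline{\tau}^{1/2})+h\big)<1$, and then substitute the resulting bound back into Lemma \ref{lem:estim Eu} to obtain \eqref{ineq:Eu}. The only differences are cosmetic (the order in which Young's inequality and the substitution are performed), and your remark about calibrating the factor $6$ against the hidden constants is a fair and accurate observation about the same bookkeeping the paper performs.
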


\begin{proof}
It follows from Lemma \ref{lem:estim-priori error} and   the estimate in \eqref{eq:estim Eu}, that
    \begin{align*}
	&\triple{(\bsy{\varepsilon}^{\mbf{q}}, \varepsilon^u - \varepsilon^{\what{u}} ,\varphi - \varphi_h)}^2 \leq 6\, L\, \|\erroru\|_{\compD}^2 + 2\, L\,  \projerroru^2 + C \, \projerrorq^2 \\
    &\quad \leq 6\, L\, \Big( (R\, h)^{1/2} \, (1+\overline{\tau}^{1/2}) + h \Big) \, \triple{(\bsy{\varepsilon}^{\mbf{q}}, \varepsilon^u - \varepsilon^{\what{u}} ,\varphi - \varphi_h)}^2 + \max\{6\, L(R\, h^{1/2} + L, C, 2\, L) \} \, (\projerroru^2 + \projerrorq^2),
    \end{align*}
where $C$ is the constant defined in Lemma \ref{lem:estim-priori error}. Then, due to $ 6\, L\, \Big( (R\, h)^{1/2} \, (1+\overline{\tau}^{1/2}) + h \Big) < 1$, the estimate \eqref{ineq:estim normH eroor} is fulfilled. Finally, \eqref{eq:estim Eu} and  \eqref{ineq:estim normH eroor} imply \eqref{ineq:Eu}.
\end{proof}   

As a byproduct of the previous result, we are now in the position to establish the asymptotic convergence rate of the discretization. 

\begin{crl}\label{cor:estim error converg}
Suppose that assumptions of Theorem \ref{thm:estim normH eroor} hold. If $u\in H^{k+1}(\Omega)$ and $\mbf{q} \in  \mbf{H}^{k+1}(\Omega)$, then
    \begin{eqnarray}\label{ineq:estim error converg}
    \|\mbf{q}-\mbf{q}_h\|_{\Omega} + \|u-u_h\|_{\Omega} \lesssim h^{k+1} \left( |u|_{k+1,\Omega} + |\mbf{q}|_{k+1,\Omega}  \right).
    \end{eqnarray}
\end{crl}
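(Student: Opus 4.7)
The plan is to combine the decompositions $\mbf{q}-\mbf{q}_h = \bsy{\varepsilon}^{\mbf{q}}+\mbf{I}^{\mbf{q}}$ and $u-u_h=\varepsilon^u+I^u$ with the two estimates already established in Theorem \ref{thm:estim normH eroor}, and then invoke the approximation properties of the HDG projector (from Appendix \ref{sec:HDGprojection}) to convert everything into powers of $h$. This is essentially a bookkeeping step once the stability/duality machinery is in place.

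First I would handle the discrete projection errors. From the definition of $\triple{\cdot}$ in \eqref{def:normH} and the uniform positivity of $\kap$, we get $\|\bsy{\varepsilon}^{\mbf{q}}\|_{\compD}\lesssim \triple{(\bsy{\varepsilon}^{\mbf{q}}, \varepsilon^u - \varepsilon^{\what{u}} ,\varphi - \varphi_h)}$, which by \eqref{ineq:estim normH eroor} is bounded by $\projerrorq+\projerroru$. Similarly, \eqref{ineq:Eu} already gives $\|\erroru\|_{\compD}\lesssim ((Rh)^{1/2}+L+h)(\projerroru+\projerrorq)$, so both discretization-error components are controlled by $\projerrorq+\projerroru$.

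Next I would translate the quantities $\projerrorq$, $\projerroru$, $\|\mbf{I}^{\mbf{q}}\|_{\compD}$ and $\|I^u\|_{\compD}$ into rates. Using the approximation properties \eqref{eq:projection_error} of the HDG projector, standard scaling arguments on $\Omega_h$, and the observation the authors already flagged (namely that $\projerrorq,\projerroru=\mathcal O(h^{k+1})$ when $\mbf{q}\in\bH^{k+1}(\Omega)$ and $u\in H^{k+1}(\Omega)$ with $\tau=\mathcal O(1)$), we get
\begin{equation*}
\projerrorq + \|\mbf{I}^{\mbf{q}}\|_{\compD} \lesssim h^{k+1}|\mbf{q}|_{k+1,\Omega},\qquad \projerroru + \|I^u\|_{\compD}\lesssim h^{k+1}|u|_{k+1,\Omega}.
\end{equation*}
Applying a triangle inequality on each component of the error decomposition then yields the desired bound over $\compD$. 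The factor $(Rh)^{1/2}+L+h$ in \eqref{ineq:Eu} is $\mathcal O(1)$ (in fact $o(1)$), so it does not disturb the order $h^{k+1}$.

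The one point requiring a little care, which I expect to be the only real obstacle, is passing from $\|\cdot\|_{\compD}$ to $\|\cdot\|_{\Omega}$, since $\mbf{q}_h$ and $u_h$ are a priori defined only on $\Omega_h$. The natural remedy is to use the extension operator $\boldsymbol E_h$ of \eqref{eq:ExtensionOperator} on each boundary face patch $T^e_{ext}$. By Assumption \eqref{eq:S2} each extension patch has height $H_e^{\perp}\sim h_e^{\perp}$, so $|T^e_{ext}|\lesssim |T^e|$, and a standard polynomial stability estimate bounds $\|\boldsymbol E_h(\bsy{\varepsilon}^{\mbf{q}})\|_{T^e_{ext}}$ and $\|E_h(\varepsilon^u)\|_{T^e_{ext}}$ by constant multiples of $\|\bsy{\varepsilon}^{\mbf{q}}\|_{T^e}$ and $\|\varepsilon^u\|_{T^e}$ respectively (the supremum bounds used to define $C^e_{ext}$ are exactly of this type). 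Summing over boundary faces shows $\|\cdot\|_{\Omega_h^c}\lesssim \|\cdot\|_{\compD}$ for the discrete parts; the exact parts $\mbf{q}$ and $u$ already live in $\Omega$ with the stated Sobolev regularity, so their contribution in $\Omega_h^c$ is absorbed into the Bramble--Hilbert estimates of the HDG projector. Assembling these pieces delivers \eqref{ineq:estim error converg}.
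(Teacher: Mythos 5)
Your argument is correct and follows essentially the same route as the paper: the paper's proof is a one-line appeal to Theorem \ref{thm:estim normH eroor}, the projector estimates \eqref{eq:projection_error}, and Lemmas 3.7--3.8 of \cite{CoQiuSo2014}, where those external lemmas play precisely the role of your final paragraph (controlling the extended discrete functions on $\Omega\setminus\overline{\Omega_h}$ via $C^e_{ext}$-type polynomial stability and absorbing the exact-solution contribution there into the approximation estimates). Your write-up simply makes explicit the bookkeeping that the paper delegates to the cited reference.
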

\begin{proof}
It follows from Theorem \ref{thm:estim normH eroor}, Lemma \ref{eq:estim Eu}, and the approximation properties \eqref{eq:projection_error},  combined with Lemmas 3.7 and 3.8 in \cite{CoQiuSo2014}.
\end{proof}
%
\section{A posteriori error analysis}\label{sec:Aposteriori}
%
\paragraph{A residual-based error estimator.}\label{sec:ErrorEstimator.}
%

In order to prevent the proliferation of high order (with respect to $h$) oscillatory terms that would only make the analysis more cumbersome, we will suppose for the remainder of this sections that $\varphi$ is the trace of a function in $W_h^* \cap H^1(\Omega_h)$. Let $(\mc{T}_h, \mc{E}_h^{\circ}, \mc{E}_h^{\partial})$ refer to the elements, interior faces and boundary faces of the computational mesh respectively. In each element $T\in \mc{T}_h$, we define the following residual-type local error estimator:
    \begin{equation}\label{def:local estimator}
    \begin{array}{rl}
    \eta_T(\mbf{q}_h , u_h^*, \varphi_h) : =& \bigg(  h_T^2 \|P_W \mc{F}(u_h^*) - \nabla \cdot \mbf{q}_h\|_T^2
    +  \|\kap^{1/2} \nabla u_h^* + \kap^{-1/2}\mbf{q}_h \|_{T}^2   \\
    &+ \displaystyle \sum_{e \in \mathcal{E}^\circ \cap T} \bigg(  h_e \|\jump{\mbf{q}_h}\|_e^2 + 
    h_e^{-1} \| \jump{u_h^*} \|_e^2 \bigg) + \sum_{e \in \mathcal{E}^\partial\cap T} h_e^{-1} \|  (\varphi_h- u_h^*) \|_e^2 \bigg)^{1/2},
    \end{array}
    \end{equation}
and introduce the data oscillation term
    \begin{align}\label{def:oscilator term}
    \text{osc}^2(\mc{F},\mc{T}_h) &:= \sum_{T\in \mc{T}_h} h_T^2 \| \mc{F}(u^*_h) - P_W \mc{F}(u^*_h) \|^2_T.
    \end{align}
We will show that the global error estimator, given by
    \begin{equation}\label{def:global estimator}
    \eta := \left( \sum_{T \in \mc{T}_h} \eta_T^2(\mbf{q}_h, u_h^*, \varphi_h)   \right)^{1/2},
    \end{equation}
constitutes a reliable and efficient local \textit{a posteriori} estimator for the error
    \begin{equation}\label{def:error posteriori}
    \aposteerror^2 :=\|\kap^{-1/2}(\mbf{q}-\mbf{q}_h)\|_{\compD}^2 + \|u - u_h^*\|_{\Omega_h}^2 + \|h_e^{-1/2}\, (\varphi - \varphi_h)\|_{\Gamma_h}^2.
    \end{equation}
The remaining part of this section is devoted to proving one of the main contributions of this work, which is the efficiency and reliability of the local error estimator \eqref{def:local estimator}. We state the result here, and will proceed to develop the tools required for its proof. This will follow readily from Theorem \ref{thm:reliable} and  Theorem \ref{thm:efficiency}, the proof of which is lengthy and requires a few technical lemmas.

\textbf{Reliability and local efficiency.}\textit{
If the Lipschitz constant $L$ associated to the source term $\mathcal F$ and the distance between $\Gamma_h$ and $\Gamma$ is small enough (in a sense that will be made clear in the hypothesis of Theorem \ref{thm:reliable}), then the error estimator $\eta$ is reliable, i.e.,
\[
\aposteerror^2 \lesssim \eta^2 + \text{osc}^2(\mc{F},\mc{T}_h).     
\]
Moreover, $\eta$ is locally efficient, meaning that
\[
\eta^2_T(\mbf{q}_h , u_h^*, \varphi_h) \lesssim  \sum_{T\in \mc{U}_h(e)} \left( \|\kap^{-1/2} \, (\mbf{q} - \mbf{q}_h) \|^2_T + \|u-u_h^*\|_T^2\right) + h_e^{-1} \|\varphi-\varphi_h\|^2_e +  \text{osc}^2(\mc{F},\mc{U}_h(e))
\]
where $\mc{U}_h(e)$ is the set of elements that have $e$ as an face. Namely, $\mc{U}_h(e) := \{ T \in \mathcal T_h : T \cap \mathcal E_h = e \}$.
}

Before setting out to show the validity of this result, we would like to make a few remarks regarding the steps required for the proof. The efficiency of the estimator can be established by adapting some of the arguments in \cite{aposteriori2} to account for the semi-linearity of the problem and for the approximation of the boundary data due to the curved boundaries. This will be addressed at the end in Theorem \ref{thm:efficiency}. Reliability, however, requires a much lengthier argument and the proof will be divided in several steps. Lemma \ref{lem:prop in W_h} establishes the connection between the residual of the HDG equation \eqref{eq:HDG_a} and the post-processed solution $u^*_h$ that appears in the local error estimator. To show that each of the terms in the estimator are indeed upper bounds for the error in the flux, the term $\|\kap^{-1/2}(\mbf{q}- \mbf{q}_h)\|^2_{\compD}$ will be decomposed in four components which will be treated separately in Lemma \ref{lem:q-qh}. This shows that the error on the flux can be successfully bounded by the estimator, plus some additional terms involving the error the scalar variable $u$ and the data in the boundary $\varphi$. Next, Lemma  \ref{lem:uh-uh*} shows an estimation for the error in the variable $u$ in terms of that of the transferred boundary condition and the flux. All these results are then consolidated in Theorem \ref{thm:reliable}, which establishes that the error can be controlled by a combination of the estimator $\eta$ and the data oscillation, that is, the reliability is finally proven.

As mentioned in Section \ref{sec:pp}, the inclusion of the nonlinear source term  $\mathcal{F}$ in the definition of $u_h^*$ helps obtaining the following result, which is important for the estimate in Lemma \ref{lem:q-qh}, that will link the post processed solution with Equation \eqref{eq:HDG_b}.
\begin{lem}\label{lem:prop in W_h}
Let $(u_h,\boldsymbol q_h)$ be the solutions to \eqref{eq:HDG} and $u_h^*$ be the post-processing of $u_h$ given by \eqref{eq:post_processing}. It holds
	\begin{equation*}
	(P_W \mc{F}(u^*_h)- \nabla \cdot \mbf{q}_h, w )_{\mc{T}_h} = - \pdual{\mbf{q}_h \cdot \mbf{n}, w}_{\partial \mc{T}_h} - (\kap \nabla u_h^*+\mbf{q}_h , \nabla w)_{\mc{T}_h} , \quad \forall \ w \in W_{1,h}^c, 
	\end{equation*}
where $W_{1,h}^c := \{w\in H_0^1(\Omega) : w|_T\in \mathbb P_1(T), \ \forall \ T \in \mc{T}_h \}$, and $\mathbb P_1(T)$ is the space of piecewise linear polynomials on $T$.
\end{lem}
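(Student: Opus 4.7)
\textbf{Proof plan for Lemma \ref{lem:prop in W_h}.}

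The natural strategy is to combine the elementwise post-processing identity \eqref{eq:post_processing1} with the mixed HDG equation \eqref{eq:HDG_b} and then recast the result via integration by parts. Fix $w\in W_{1,h}^{c}$; note that $w|_T\in\mathbb{P}_1(T)\subset\mathbb{P}_{k+1}(T)$, so $w|_T$ is an admissible test in \eqref{eq:post_processing1}, and since $k\geq 1$ we also have $w|_T\in\mathbb{P}_k(T)$, so $w\in W_h$ (viewed as a discontinuous function) and $P_W w=w$. Similarly, $w|_e\in\mathbb{P}_1(e)\subset\mathbb{P}_k(e)$ for every face $e\in\mc{E}_h$, hence the single-valued trace of $w$ on the skeleton belongs to $M_h$.

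First I would take \eqref{eq:post_processing1} with the test function $w|_T$, sum over $T\in\mc{T}_h$ and rearrange to obtain
\[
(\mc{F}(u_h^*),w)_{\mc{T}_h}=-(\kap\nabla u_h^*+\mbf{q}_h,\nabla w)_{\mc{T}_h}+(\mc{F}(u_h),w)_{\mc{T}_h}.
\]
Next, test \eqref{eq:HDG_b} against $w\in W_h$ to get
\[
(\mc{F}(u_h),w)_{\mc{T}_h}=-(\mbf{q}_h,\nabla w)_{\mc{T}_h}+\pdual{\what{\mbf{q}}_h\cdot\mbf{n},w}_{\partial\mc{T}_h}.
\]
The critical observation is that $\pdual{\what{\mbf{q}}_h\cdot\mbf{n},w}_{\partial\mc{T}_h}=0$: on the interior skeleton $\partial\mc{T}_h\setminus\Gamma_h$ this follows from the conservativity \eqref{eq:HDG_d} applied with the admissible choice $\mu=w|_{\mc{E}_h}\in M_h$ (here the continuity of $w$ across interior faces is used to view $w$ as single-valued on each $e$), and on $\Gamma_h$ it follows because $w$ vanishes there as an element of $H_0^1$.

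Substituting this into the previous identity yields $(\mc{F}(u_h),w)_{\mc{T}_h}=-(\mbf{q}_h,\nabla w)_{\mc{T}_h}$, and inserting it into the first displayed equation produces
\[
(\mc{F}(u_h^*),w)_{\mc{T}_h}=-(\kap\nabla u_h^*,\nabla w)_{\mc{T}_h}-2(\mbf{q}_h,\nabla w)_{\mc{T}_h}.
\]
To finish, I would replace $\mc{F}(u_h^*)$ by $P_W\mc{F}(u_h^*)$ using $P_W w=w$, and convert one copy of $(\mbf{q}_h,\nabla w)_{\mc{T}_h}$ into the divergence form via elementwise integration by parts, namely $(\mbf{q}_h,\nabla w)_{\mc{T}_h}=-(\nabla\cdot\mbf{q}_h,w)_{\mc{T}_h}+\pdual{\mbf{q}_h\cdot\mbf{n},w}_{\partial\mc{T}_h}$. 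Collecting terms gives exactly the identity claimed in the lemma.

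The only subtle step is the vanishing of $\pdual{\what{\mbf{q}}_h\cdot\mbf{n},w}_{\partial\mc{T}_h}$: one must simultaneously justify that $w|_{\mc{E}_h}$ is a legitimate element of $M_h$ (requiring $k\ge 1$ and the continuity of $w$) and that $w$ vanishes on the computational boundary $\Gamma_h$. All remaining manipulations are routine integrations by parts and applications of the hypotheses that $k\ge1$ so that piecewise-linear test functions are covered by both $W_h$ and the post-processing test space $\mathbb{P}_{k+1}(T)$.
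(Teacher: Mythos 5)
Your proposal is correct and follows essentially the same route as the paper: both proofs combine the post-processing identity \eqref{eq:post_processing1} with the HDG equation \eqref{eq:HDG_b}, use the vanishing of $\pdual{\what{\mbf{q}}_h\cdot\mbf{n},w}_{\partial\mc{T}_h}$ (via \eqref{eq:HDG_d} and $w=0$ on the boundary), and finish with an elementwise integration by parts and the observation that $P_W w=w$ for $k\ge 1$. The only difference is cosmetic---you integrate by parts at the end rather than at the start---and your explicit justification of why $w$ is an admissible test function in $W_h$, $M_h$ and $\mathbb{P}_{k+1}(T)$ is actually more careful than the paper's.
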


\begin{proof}
Considering $w\in W_{1,h}^c$ and integrating by parts in the equation \eqref{eq:HDG_b} we obtain, for all $T\in \mc{T}_h$ 
\[
    (\nabla \cdot \mbf{q}_h, w)_{T}  - \pdual{\mbf{q}_h \cdot \mbf{n}, w}_{\partial T} + \pdual{ \what{\mbf{q}}_h \cdot \mbf{n},w}_{\partial T} =  (\mc{F}(u_h),w)_{T}.
\]
Then, due to $\pdual{ \what{\mbf{q}}_h \cdot \mbf{n},w}_{\partial \mc{T}_h} = 0$ and using \eqref{eq:post_processing}, we have
\[
    (\nabla \cdot \mbf{q}_h, w)_{\mc{T}_h}  - \pdual{\mbf{q}_h \cdot \mbf{n}, w}_{\partial \mc{T}_h} = (\kap \nabla u_h^*, \nabla w)_{\mc{T}_h} + (\mc{F}(u_h^*), w)_{\mc{T}_h} + (\mbf{q}_h,\nabla w)_{\mc{T}_h} ,
\]
which concludes the proof.
\end{proof}

In what follows, $ \wtil{u}^*_h\in W_h^*\cap H^1(\compD)$ such that $\wtil{u}^*_h=\varphi$ on $\Gamma_h$, will be used to denote the so-called Oswald interpolation of $u^*_h$ defined in \ref{lem: Oswald Interpolation}.  Now, we  apply the Lemma \ref{lem: Oswald Interpolation}, with $|\alpha|=1$, to get
    \begin{align*}
   \| \nabla(\wtil{u}^*_h - u_h^* ) \|^2_{\compD}
    &\leq C_O \, \left(\sum_{e\in \mc{E}_h^\circ} h_e^{-1} \| \jump{u_h^*} \|^2_e + \sum_{e\in \mc{E}_h^{\partial}} h_e^{-1} \| \varphi - u_h^*\|^2_e \right) \\
    &\leq  C_O  \left(\sum_{e\in \mc{E}_h^\circ} h_e^{-1} \| \jump{u_h^*} \|^2_e 
     +  2 \sum_{e\in \mc{E}_h^{\partial}} h_e^{-1} \| \varphi - \varphi_h\|^2_e + 2 \sum_{e\in \mc{E}_h^{\partial}} h_e^{-1} \| \varphi_h - u_h^*\|^2_e \right).
    \end{align*} 
where $C_O>0$ is a constant independent of $h$ arising from the approximation properties of the Oswald's interpolant. Similarly, for $|\alpha|=0$, we have
\[
\| \wtil{u}^*_h - u_h^*  \|^2_{\compD}
 \leq C_O   \left(\sum_{e\in \mc{E}_h^\circ} h_e \| \jump{u_h^*} \|^2_e 
     +  2\sum_{e\in \mc{E}_h^{\partial}} h_e \| \varphi - \varphi_h\|^2_e + 2\sum_{e\in \mc{E}_h^{\partial}} h_e \| \varphi_h - u_h^*\|^2_e\right) .
\]
Since for a fine enough mesh $h_e\leq h_e^{-1}$, the two inequalities above can be combined into
    \begin{equation}\label{eq:uh-uhtilde}
        \| \wtil{u}^*_h - u_h^*  \|^2_{\compD}\! +  \| \nabla(\wtil{u}^*_h - u_h^*)  \|^2_{\compD} \leq 2 C_O\! \left(   \sum_{e\in \mc{E}_h^\circ} h_e^{-1} \| \jump{u_h^*} \|^2_e   +  \sum_{e\in \mc{E}_h^{\partial}} h_e^{-1} \| \varphi_h - u_h^*\|^2_e +  \| h_e^{-1/2}(\varphi - \varphi_h)\|^2_{\Gamma_h} \right).
    \end{equation}
The following three results allow us to find a preliminary estimate for each term of our error defined in \eqref{def:error posteriori}. We begin rewrite  $\|\kap^{-1/2}(\mbf{q}- \mbf{q}_h)\|^2_{\mc{T}_h}$ in a suitable manner. Note first that using \eqref{eq:mixed_a}, and adding and subtracting $ \wtil{u}^*_h$ , it follows
	\begin{align*}
	\|\kap^{-1/2}(\mbf{q}- \mbf{q}_h)\|_T^2 &= (\mbf{q}-\mbf{q}_h, \kap^{-1}(\mbf{q} - \mbf{q}_h))_T  \\
	&= -(\mbf{q} - \mbf{q}_h, \nabla(u- \wtil{u}^*_h))_T - (\mbf{q}-\mbf{q}_h, \nabla \wtil{u}^*_h - \kap^{-1} \mbf{q}_h)_T \\
	&= (\nabla \cdot (\mbf{q} - \mbf{q}_h), u- \wtil{u}^*_h)_T - \pdual{(\mbf{q} - \mbf{q}_h)\cdot \mbf{n},u- \wtil{u}^*_h}_{\partial T} - (\mbf{q}-\mbf{q}_h, \nabla  \wtil{u}^*_h - \kap^{-1} \mbf{q}_h)_T.
	\end{align*}
Adding and subtracting $\mathcal F(u^*_h)$ and $P_W\mathcal F(u_h^*)$ in the first term above, and using \eqref{eq:mixed_b} to replace $\nabla\cdot\boldsymbol q$ by $\mathcal F(u)$ yields
	\begin{align*}
	\|\kap^{-1/2}(\mbf{q}- \mbf{q}_h)\|_T^2 &= ( \mc{F}(u^*_h) - P_W \mc{F}(u^*_h) + P_W \mc{F}(u^*_h) - \nabla \cdot \mbf{q}_h  + \mc{F}(u) - \mc{F}(u^*_h) , u-\wtil{u}^*_h)_T \\
	&\quad - \pdual{(\mbf{q} - \mbf{q}_h)\cdot \mbf{n},u-\wtil{u}^*_h}_{\partial T} - (\mbf{q}-\mbf{q}_h, \nabla \wtil{u}^*_h - \kap^{-1} \mbf{q}_h)_T.
	\end{align*}
Thus, since  $\mbf{q}\in H(\mathrm{div};\Omega_h)$ and $u-\wtil{u}^*_h\in H_0^1(\Omega_h)$, we can add over the entire grid to obtain  
\begin{align}\label{q_qh}
	\|\kap^{-1/2}(\mbf{q}- \mbf{q}_h)\|^2_{\compD} :=  \sum_{i=1}^4 \md{T}_{i},
 \end{align}
where 
\begin{equation*}
\begin{array}{lcl}
	\md{T}_{1} := (\mc{F}(u^*_h) - P_W \mc{F}(u^*_h), u - \wtil{u}^*_h)_{\mc{T}_h}&,& 
	\md{T}_{3} := -(\mbf{q} - \mbf{q}_h, \nabla \wtil{u}^*_h - \kap^{-1}\mbf{q}_h)_{\mc{T}_h}\\
	\md{T}_{2} := (P_W \mc{F}(u^*_h) - \nabla \cdot \mbf{q}_h, u - \wtil{u}^*_h)_{\mc{T}_h} + \langle \mbf{q}_h \cdot \mbf{n},u-\wtil{u}^*_h \rangle_{\partial \mc{T}_h}  &,&
	\md{T}_{4} := (\mc{F}(u) - \mc{F}(u^*_h), u-\wtil{u}^*_h)_{\mc{T}_h}.
\end{array}
\end{equation*} 
In the following estimates, for a given function $v$, let $Q_k(v)$ be the averaged Taylor polynomial of degree $k$ associated to $v$. For smooth functions this polynomial coincides with the ``usual'' Taylor polynomial, whereas for functions with Sobolev regularity it is defined by mollification of the weakly defined Taylor polynomial \cite[Section 4.1]{BrSc2008}.
\begin{lem}\label{lem:q-qh}
There exists $ \overline{C}_1>0$, independent of $h$ such that
    \begin{equation*}
    \begin{array}{rl}
   	&\|\kap^{-1/2}(\mbf{q}-\mbf{q}_h)\|^2_{\compD}
   	\leq \displaystyle  \overline{C}_1\, \left( \text{osc}^2(\mc{F},\mc{T}_h) + 	 \sum_{T\in \mc{T}_h} h_T^2 \|P_W \mc{F}(u^*_h) - \nabla \cdot \mbf{q}_h\|^2_{T} \right. \\
	&\qquad \left. + \displaystyle 
     \sum_{T\in \mc{T}_h} \|\kap^{1/2} \nabla u_h^* + \kap^{-1/2}\mbf{q}_h\|^2_{T} +   \sum_{e\in \mc{E}_h^\circ} \left( h_e\,  \|\jump{\mbf{q}_h}\|^2_{e} +  h_e^{-1}\, \| \jump{u_h^*} \|^2_e\right) + \sum_{e\in \mc{E}_h^{\partial}} h_e^{-1} \| \varphi_h - u_h^*\|^2_e  \right)  \\[2ex]
    &\qquad  \displaystyle +  \overline{C}_1\,   \| h_e^{-1/2}\, (\varphi - \varphi_h)\|^2_{\Gamma_h} +  \overline{C}_1\, L\,  \|u - u^*_h\|^2_{\compD}.
	\end{array}
	\end{equation*}
\end{lem}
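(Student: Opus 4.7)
\textbf{Proof plan for Lemma \ref{lem:q-qh}.}
The strategy is to bound each of the four terms $\md{T}_1, \md{T}_2, \md{T}_3, \md{T}_4$ in the decomposition \eqref{q_qh} separately, expressing each bound in terms of the estimator quantities, of $\|u - u_h^*\|_{\compD}$, of the boundary quantity $\|h_e^{-1/2}(\varphi - \varphi_h)\|_{\Gamma_h}$, and of $\|\kap^{-1/2}(\mbf{q} - \mbf{q}_h)\|_{\compD}$ itself, so that the latter can be absorbed on the left-hand side via Young's inequality. The common thread is to control $\|\nabla(u - \wtil{u}_h^*)\|_{\compD}$, which appears as a multiplier in $\md{T}_1$ and $\md{T}_2$. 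I would split
\[
\nabla(u - \wtil{u}_h^*) = \nabla(u - u_h^*) + \nabla(u_h^* - \wtil{u}_h^*),
\]
use $\nabla u = -\kap^{-1}\mbf{q}$ to write $\kap^{1/2}\nabla(u-u_h^*) = -\kap^{-1/2}(\mbf{q}-\mbf{q}_h) - (\kap^{1/2}\nabla u_h^* + \kap^{-1/2}\mbf{q}_h)$, and bound $\|\nabla(u_h^* - \wtil{u}_h^*)\|_{\compD}$ through the Oswald bound \eqref{eq:uh-uhtilde}, which produces exactly the jump, boundary-face, and $\varphi-\varphi_h$ contributions.

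For $\md{T}_1$: since $\mc{F}(u_h^*) - P_W\mc{F}(u_h^*)\perp W_h$ and in particular is orthogonal to element-wise constants, I can subtract the element average of $u - \wtil{u}_h^*$ and apply a Poincar\'e estimate to get $|\md{T}_1|\lesssim \mathrm{osc}(\mc{F},\mc{T}_h)\,\|\nabla(u-\wtil{u}_h^*)\|_{\compD}$.
For $\md{T}_2$: I would take a Cl\'ement/Scott--Zhang interpolant $w_c\in W_{1,h}^c$ of $u - \wtil{u}_h^*\in H_0^1(\compD)$ and use Lemma \ref{lem:prop in W_h} to rewrite the $w_c$-portion as $-(\kap\nabla u_h^* + \mbf{q}_h, \nabla w_c)_{\mc{T}_h}$. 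The leftover element-volume residual multiplies $(u-\wtil{u}_h^*) - w_c$, for which Cl\'ement estimates supply the factor $h_T$ needed to produce the $h_T^2\|P_W\mc{F}(u_h^*)-\nabla\cdot\mbf{q}_h\|_T^2$ contribution. Continuity of $(u-\wtil{u}_h^*) - w_c$ across inter-element faces, together with its vanishing on $\Gamma_h$, converts $\langle \mbf{q}_h\cdot\mbf{n},\cdot\rangle_{\partial\mc{T}_h}$ into the interior-jump sum $\sum_{e\in\mc{E}_h^\circ}\langle\jump{\mbf{q}_h},\cdot\rangle_e$; the $h_e^{1/2}$ Cl\'ement estimate on faces then gives the $h_e\|\jump{\mbf{q}_h}\|_e^2$ contribution, and the remaining $(\kap\nabla u_h^* + \mbf{q}_h, \nabla w_c)$ yields the estimator term $\|\kap^{1/2}\nabla u_h^* + \kap^{-1/2}\mbf{q}_h\|^2_{\mc{T}_h}$.

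For $\md{T}_3$: after splitting $\nabla\wtil{u}_h^* - \kap^{-1}\mbf{q}_h$ into $(\nabla\wtil{u}_h^* - \nabla u_h^*) + (\nabla u_h^* - \kap^{-1}\mbf{q}_h)$ (with signs taken as in \eqref{q_qh}), Cauchy--Schwarz bounds the first piece by $\|\nabla(\wtil{u}_h^* - u_h^*)\|_{\compD}\,\|\kap^{-1/2}(\mbf{q}-\mbf{q}_h)\|_{\compD}$ controlled via \eqref{eq:uh-uhtilde}, and the second by $\|\kap^{1/2}\nabla u_h^* + \kap^{-1/2}\mbf{q}_h\|_{\compD}\,\|\kap^{-1/2}(\mbf{q}-\mbf{q}_h)\|_{\compD}$, both of which are then Young-split so that the $\|\kap^{-1/2}(\mbf{q}-\mbf{q}_h)\|^2_{\compD}$ part is absorbed on the LHS and the rest is estimator.
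For $\md{T}_4$: Lipschitz continuity \eqref{eq:Lipschitz} and Cauchy--Schwarz give
\[
|\md{T}_4| \le L\,\|u - u_h^*\|_{\compD}\bigl(\|u - u_h^*\|_{\compD} + \|u_h^* - \wtil{u}_h^*\|_{\compD}\bigr),
\]
and Young's inequality plus \eqref{eq:uh-uhtilde} for $\|u_h^* - \wtil{u}_h^*\|_{\compD}$ reduces the bound to the desired $L\,\|u-u_h^*\|^2_{\compD}$ plus estimator terms.

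Summing the four bounds, applying Young's inequality wherever $\|\kap^{-1/2}(\mbf{q}-\mbf{q}_h)\|_{\compD}$ appears on the right (it is multiplied by quantities such as $\mathrm{osc}$, $\|\kap^{1/2}\nabla u_h^* + \kap^{-1/2}\mbf{q}_h\|$, or $\|\nabla(u_h^* - \wtil{u}_h^*)\|$), and absorbing it into the LHS yields the stated inequality. The \emph{main obstacle} I expect is precisely this bookkeeping: because $\|\nabla(u-\wtil{u}_h^*)\|_{\compD}$ itself contains a piece bounded only by $\|\kap^{-1/2}(\mbf{q}-\mbf{q}_h)\|_{\compD}$, the Young coefficients in $\md{T}_1$, $\md{T}_2$ and $\md{T}_3$ must be chosen consistently so that the total absorbed fraction is strictly less than one, with a constant $\overline{C}_1$ depending only on $\underline\kap$, $\overline\kap$, $\overline\tau$, $R$, and the Cl\'ement/Oswald constants but not on $h$.
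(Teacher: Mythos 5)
Your proposal is correct and follows essentially the same route as the paper's proof: the same four-term decomposition, the same orthogonality-plus-Poincar\'e (Bramble--Hilbert) argument for $\md{T}_1$, the same Cl\'ement interpolant combined with Lemma \ref{lem:prop in W_h} for $\md{T}_2$, direct Young splittings for $\md{T}_3$ and $\md{T}_4$, and the Oswald bound \eqref{eq:uh-uhtilde} together with a final absorption of $\|\kap^{-1/2}(\mbf{q}-\mbf{q}_h)\|^2_{\compD}$ into the left-hand side. You also correctly identify the one delicate point, namely choosing the Young parameters so that the total absorbed fraction stays below one.
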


\begin{proof}
To prove the result, we will bound each of the terms $ \md{T}_{i}$ in the decomposition separately. The final result will come as a consequence of the individual estimates. In some cases we will make use of a free parameter $\epsilon_j>0$.
%
\paragraph{Bound for $\md{T}_{1}$.}
%
Consider $Q_0(u-\wtil{u}^*_h)$, the zeroth order averaged Taylor polynomial associated to $u-\wtil{u}^*_h$. Since $(\mc{F}(u^*_h) - P_W \mc{F}(u^*_h), Q_0(u-\wtil{u}^*_h) )_T=0$, then by Young's inequality and the Bramble-Hilbert lemma with constant $c>0$, independent of $h$, we obtain
\[
	(h_T (\mc{F}(u^*_h) - P_W \mc{F}(u^*_h)), h_T^{-1}(u - \wtil{u}^*_h - Q_0(u-\wtil{u}^*_h)) )_T
	 \leq\dfrac{h_T^2}{4 \epsilon_1} \| \mc{F}(u^*_h) - P_W \mc{F}(u^*_h) \|^2_{T} +c\,  \epsilon_1\,  \|\nabla(u - \wtil{u}^*_h)\|^2_{T}.
\]
Using \eqref{eq: GS - comput domain_a} in the last term of the above expression to replace $\nabla u$ by $\kappa^{-1}\boldsymbol q$ along with adding and subtracting $ \nabla u^*_h$ and  $\kap^{-1} \mbf{q}_h$, we obtain
	\begin{align}\label{eq:est_aux_T1}
	 \|\nabla(u -\wtil{u}^*_h)\|^2_{T} 
\leq \frac{3}{\underline{\kap} } \left( \|\kap^{-1/2}(\mbf{q} - \mbf{q}_h)\|^2_{T} + \|\kap^{1/2} \nabla u^*_h + \kap^{-1/2} \mbf{q}_h \|_{T}^2 + \|\kap^{1/2} \nabla u^*_h -\kap^{1/2} \nabla \wtil{u}^*_h\|_{T}^2 \right).
	\end{align}
Thus
	\begin{align}
    \nonumber
    | \md{T}_{1}| \leq\,& \quad \displaystyle \what{C}_1 \epsilon_1 \sum_{T\in \mc{T}_h}  \left( \|\kap^{-1/2}(\mbf{q} - \mbf{q}_h)\|^2_{T}   +  \|\kap^{1/2} \nabla u^*_h + \kap^{-1/2} \mbf{q}_h \|_{T}^2  \|\kap^{1/2} \nabla u^*_h - \kap^{1/2} \nabla \wtil{u}^*_h \|_{T}^2 \right)  \\
	\label{eq:posteriori_T1}
	& + \dfrac{\what{C}_1}{4\, \epsilon_1} \sum_{T\in \mc{T}_h} h_T^2 \| \mc{F}(u^*_h) - P_W \mc{F}(u^*_h) \|^2_{T},
	\end{align}
where $\what{C}_1:= \max\{1, 3\, c\, \underline{\kap}^{-1}\}$.
%
\paragraph{Bound for $\md{T}_{2}$.}
%
We begin by rewriting  $\md{T}_2$ as 
	 \begin{align*}
    \md{T}_{2}
	=& \pdual{\mbf{q}_h \cdot \mbf{n}, u - \wtil{u}^*_h}_{\partial \mc{T}_h} + (P_W \mc{F}(u^*_h) - \nabla \cdot \mbf{q}_h, (u - \wtil{u}^*_h) - \mc{C}_h(u-\wtil{u}^*_h))_{\mc{T}_h} \\
	& + (P_W \mc{F}(u^*_h) - \nabla \cdot \mbf{q}_h, \mc{C}_h (u-\wtil{u}^*_h))_{\mc{T}_h}, 
	 \end{align*}
where $\mc{C}_h$ is the Cl\'ement interpolation operator defined in Appendix \ref{sec:cle-osw}. Rearranging terms above, using  $u= \wtil{u}^*_h = \varphi$ on $\Gamma_h$, and applying Lemma \ref{lem:prop in W_h}, we have
	\begin{align*}
	\md{T}_{2} &= \pdual{\mbf{q}_h \cdot \mbf{n}, u - \wtil{u}^*_h}_{\partial \mc{T}_h} + (P_W \mc{F}(u^*_h) - \nabla \cdot \mbf{q}_h, (Id_M - \mc{C}_h)(u - \wtil{u}^*_h))_{\mc{T}_h} - \pdual{\mbf{q}_h\cdot \mbf{n}, \mc{C}_h(u-\wtil{u}^*_h)}_{\partial \mc{T}_h} \\
	 &\quad - (\kap \nabla u_h^* + \mbf{q}_h, \nabla \mc{C}_h(u - \wtil{u}^*_h))_{\mc{T}_h}  \\
	 &= \pdual{\mbf{q}_h \cdot \mbf{n}, (Id_M - \mc{C}_h)(u - \wtil{u}^*_h)}_{\partial \mc{T}_h \setminus \Gamma_h} + (P_W \mc{F}(u^*_h) - \nabla \cdot \mbf{q}_h, (Id_M - \mc{C}_h)(u - \wtil{u}^*_h))_{\mc{T}_h} \\
    &\quad  - (\kap \nabla u_h^* + \mbf{q}_h, \nabla \mc{C}_h (u - \wtil{u}^*_h))_{\mc{T}_h}
    \end{align*}
Then, by Young's inequality, 
	\begin{align*}
    |\md{T}_{2}| &\leq \dfrac{1}{4\, \epsilon_2} \sum_{T\in \mc{T}_h} h_T^2 \|P_W \mc{F}(u^*_h) - \nabla \cdot \mbf{q}_h\|^2_{T} + \epsilon_2 \sum_{T\in \mc{T}_h} h_T^{-2} \|(Id_M - \mc{C}_h)(u - \wtil{u}^*_h)\|^2_{T}  \nonumber\\
	&\quad + \dfrac{1}{4\, \epsilon_2} \sum_{e\in \mc{E}_h^\circ} h_e \|\jump{\mbf{q}_h}\|^2_{e} + \epsilon_2 \sum_{e\in \mc{E}_h^i} h_e^{-1} \|(Id_M - \mc{C}_h)(u - \wtil{u}^*_h)\|^2_{e}\nonumber \\
	&\quad + \dfrac{\overline{\kap}}{4\, \epsilon_2} \sum_{T\in \mc{T}_h} \|\kap^{1/2} \nabla u_h^* + \kap^{-1/2}\mbf{q}_h\|^2_{T} + \epsilon_2 \sum_{T\in \mc{T}_h} |\mc{C}_h(u - \wtil{u}^*_h)|^2_{1,T}.\label{auxT3}
	\end{align*}
On the other hand, the properties of  Cl\'ement's interpolant---Lemma \ref{lem. Clement interpolator }---and the Poincar\'e inequality with constant $c_p$ imply that
    \begin{align*}
    \sum_{T\in \mc{T}_h} h_T^{-2} \|(Id_M - \mc{C}_h)(u - \wtil{u}^*_h)\|^2_{T} &\lesssim  \sum_{T\in \mc{T}_h} \|u-\wtil{u}^*_h\|^2_{\Delta_T} \leq  \what{c}_1\, c_p \sum_{T\in \mc{T}_h} |u - \wtil{u}^*_h|^2_{1,T}, \\
     \sum_{e\in \mc{E}_h^\circ} h_e^{-1} \|(Id_M - \mc{C}_h)(u - \wtil{u}^*_h)\|^2_{e} &\lesssim \sum_{e\in \mc{E}_h^\circ}  \|u-\wtil{u}^*_h\|^2_{\Delta_e} \leq  \what{c}_2\, c_p \sum_{T\in \mc{T}_h} |u - \wtil{u}^*_h|^2_{1,T}, \\
     \sum_{T\in \mc{T}_h} |\mc{C}_h(u - \wtil{u}^*_h)|^2_{1,T} &\lesssim \sum_{T\in \mc{T}_h} \|u-\wtil{u}^*_h\|^2_T \leq  \what{c}_3\, c_p \sum_{T\in \mc{T}_h} |u - \wtil{u}^*_h|^2_{1,T}.
    \end{align*}
Above, the sets $\Delta_T$ and $\Delta_e$ correspond to the macro element surrounding the element $T$ and face $e$ respectively, i.e.
\[
\Delta_T:= \{ T'\in \mc{T}_h : \overline{T} \cap \overline{T'} \neq \varnothing \} \qquad  \text{ and } \qquad \Delta_e = \{T'\in \mc{T}_h: \overline{T'} \cap \overline{e} \neq \varnothing \}. 
\]
Then, applying \eqref{eq:est_aux_T1} to the right side terms of the last three inequalities, one arrives at
\begin{align}
\nonumber
|\md{T}_{2}| \leq\,&  \quad \frac{\what{C}_2}{4\epsilon_2}\left( \sum_{T\in \mc{T}_h} h_T^2 \|P_W \mc{F}(u^*_h) - \nabla \cdot \mbf{q}_h\|^2_{T} + \sum_{e\in \mc{E}_h^\circ} h_e \|\jump{\mbf{q}_h}\|^2_{e} \right) \\
\label{eq:posteriori_T2}
&  + \epsilon_2\what{C}_2\sum_{T\in \mc{T}_h}\left( \|\kap^{-1/2} (\mbf{q} - \mbf{q}_h)\|^2_T + \left(\frac{1}{4\, \epsilon_2} + 1\right)  \|\kap^{1/2} \nabla u_h^* + \kap^{-1/2}\mbf{q}_h\|^2_{T} + \|\kap^{1/2}\, \nabla (u-\wtil{u}_h^*)\|_T \right)
\end{align}
with $\what{C}_2= \max\{1, \overline{\kap}, 3\, c_p\, \underline {\kap}^{-1} (\what{c}_1+ \what{c}_2+\what{c}_3) \}$.

%
\paragraph{Bound for $\md{T}_{3}$.}
%
From Young's inequality, it follows that
    \begin{align}\label{eq:posteriori_T3}
    |\md{T}_{3}|
	&\leq \sum_{T\in \mc{T}_h} \left( \dfrac{1}{2\, \epsilon_3} \left(\| \kap^{1/2} \nabla u^*_h  + \kap^{-1/2}\mbf{q}_h \|^2_T
	+
	 \| \kap^{1/2}\, \nabla (\wtil{u}^*_h - u_h^*)\|^2_T \right)
	+ \epsilon_3 \|\kap^{-1/2}(\mbf{q} - \mbf{q}_h)\|^2_T \right).
	\end{align}
%
\paragraph{Bound for $\md{T}_{4}$.}
%
Adding and subtracting $u^*_h$, and using the Lipschitz continuity of $\mc{F}$, we have
	\begin{equation}
	\label{eq:posteriori_T4}
	|\md{T}_{4}| \leq L \sum_{T\in \mc{T}_h} \left(\|u-u^*_h \|^2_{T} + \|u-u^*_h\|_T \|u^*_h- \wtil{u}^*_h\|_T \right)\leq \dfrac{L}{2} \sum_{T\in \mc{T}_h}\left(3\|u - u^*_h\|^2_T + \|u^*_h - \wtil{u}^*_h\|^2_T\right),
	\end{equation}
where the second inequality follows from  Young's inequality.
%
\paragraph{Wrap-up.}
By the decomposition \eqref{q_qh} and the bounds \eqref{eq:posteriori_T1} - \eqref{eq:posteriori_T4} obtained for the terms $\mb{T}_{i}$ , we deduce that
    \begin{align*}
   	&\|\kap^{-1/2}(\mbf{q}-\mbf{q}_h)\|^2_{\compD}
   	\leq \dfrac{\what{C}_1}{4\, \epsilon_1} \sum_{T\in \mc{T}_h} h_T^2 \| \mc{F}(u^*_h) - P_W \mc{F}(u^*_h) \|^2_{0,T} + 	\dfrac{\what{C}_2}{4\, \epsilon_2} \sum_{T\in \mc{T}_h} h_T^2 \|P_W \mc{F}(u^*_h) - \nabla \cdot \mbf{q}_h\|^2_{T} \\
	&\quad  + \left( \what{C}_1\, \epsilon_1 + \what{C}_2\, \epsilon_2 + \dfrac{\what{C}_2}{4\, \epsilon_2} + \dfrac{1}{2\, \epsilon_3 }\right)  \sum_{T\in \mc{T}_h} \|\kap^{1/2} \nabla u_h^* + \kap^{-1/2}\mbf{q}_h\|^2_{T} + \dfrac{\what{C}_2}{4\, \epsilon_2}  \sum_{e\in \mc{E}_h^\circ} h_e \, \|\jump{\mbf{q}_h}\|^2_{e} \\ 
	&\quad + \left( \what{C}_1\, \epsilon_1 + \what{C}_2\, \epsilon_2 + \dfrac{1}{2\, \epsilon_3}\right)\, \overline{\kap}\, 
    \sum_{T\in \mc{T}_h} \| \nabla (u_h^* -  \wtil{u}^*_h)\|^2_{T}  + \dfrac{L}{2} \sum_{T\in \mc{T}_h} \|u^*_h - \wtil{u}^*_h\|^2_T +
    \dfrac{3\, L}{2} \sum_{T\in \mc{T}_h}\|u - u^*_h\|^2_T
	\\
    &\quad  + (\what{C}_1\, \epsilon_1 + \what{C}_2\,  \epsilon_2 + \epsilon_3 )\sum_{T\in \mc{T}_h} \|\kap^{-1/2}(\mbf{q} - \mbf{q}_h)\|^2_T .
	\end{align*}
Finally, considering values of $\epsilon_1, \epsilon_2$, and $\epsilon_3$ such that $\what{C}_1\, \epsilon_1 + \what{C}_2\,  \epsilon_2 + \epsilon_3 < 1/2$, and the estimate for the terms that involve $\wtil{u}_h^*$, given in \eqref{eq:uh-uhtilde},  the proof in concluded with $ \overline{C}_1$ dependent only  of $\what{C}_1$ and $\what{C}_2$.		
\end{proof}
Now, we bound the second term of the error $\aposteerror^2$ (see \eqref{def:error posteriori}).
\begin{lem}\label{lem:uh-uh*}
Under all the previous assumptions, the following bound for the error in the post processed solution holds
	\begin{equation*}
	\begin{array}{rl}
 	\|u-u^*_h\|^2_{\compD} 
	\leq&  \displaystyle   \overline{C}_2 \, \left(   \sum_{T\in \mc{T}_h} \|\kap^{-1/2} \mbf{q}_h + \kap^{1/2}\nabla u^*_h \|^2_T + \sum_{e\in \mc{E}_h^\circ} h_e^{-1} \| \jump{u_h^*} \|^2_e  +  \sum_{e\in \mc{E}_h^{\partial}} h_e^{-1} \| \varphi_h - u_h^*\|^2_e \right. \\[2ex]
	&\qquad + \|\kap^{-1/2}\, (\mbf{q} - \mbf{q}_h) \|^2_{\compD} + \|h_e^{-1/2}\, (\varphi - \varphi_h)\|^2_{\Gamma_h} \Bigg)
	\end{array}
	\end{equation*}
where $ \overline{C}_2>0$ is a positive constant independent of $h$.
\end{lem}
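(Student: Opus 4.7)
The plan is to decompose the error using the Oswald interpolant $\widetilde{u}_h^* \in W_h^*\cap H^1(\Omega_h)$ introduced just before Lemma \ref{lem:q-qh}, which satisfies $\widetilde{u}_h^* = \varphi$ on $\Gamma_h$. Writing
\[
u - u_h^* = (u - \widetilde{u}_h^*) + (\widetilde{u}_h^* - u_h^*),
\]
the triangle inequality reduces the task to bounding these two pieces separately.

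First I would handle $\widetilde{u}_h^* - u_h^*$: the Oswald interpolation estimate \eqref{eq:uh-uhtilde} already provides control of $\|\widetilde{u}_h^* - u_h^*\|_{\Omega_h}$ (and of its gradient) in terms of exactly three of the quantities on the right-hand side of the claim, namely the jumps $h_e^{-1}\|\jump{u_h^*}\|_e^2$ on interior faces, the boundary mismatch $h_e^{-1}\|\varphi_h - u_h^*\|_e^2$, and the transferred-data error $\|h_e^{-1/2}(\varphi - \varphi_h)\|_{\Gamma_h}^2$.

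Next I would handle $u - \widetilde{u}_h^*$. Since $u = \varphi = \widetilde{u}_h^*$ on $\Gamma_h$, the difference lies in $H_0^1(\Omega_h)$, so a Poincar\'e inequality on $\Omega_h$ yields
\[
\|u - \widetilde{u}_h^*\|_{\Omega_h} \lesssim |u - \widetilde{u}_h^*|_{1,\Omega_h}.
\]
The $H^1$-seminorm on the right is precisely the object bounded in \eqref{eq:est_aux_T1}, producing
\[
|u - \widetilde{u}_h^*|_{1,T}^2 \lesssim \|\kap^{-1/2}(\mbf{q}-\mbf{q}_h)\|_T^2 + \|\kap^{1/2}\nabla u_h^* + \kap^{-1/2}\mbf{q}_h\|_T^2 + \|\kap^{1/2}\nabla(u_h^* - \widetilde{u}_h^*)\|_T^2.
\]
Summing over $T$ and invoking \eqref{eq:uh-uhtilde} once more to absorb the last term into the three interface/boundary contributions finishes the bound for $\|u-\widetilde{u}_h^*\|_{\Omega_h}$.

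Combining the two pieces through $\|u-u_h^*\|_{\Omega_h}^2 \leq 2\|u-\widetilde{u}_h^*\|_{\Omega_h}^2 + 2\|\widetilde{u}_h^*-u_h^*\|_{\Omega_h}^2$ and collecting constants (absorbing $\overline{\kap}/\underline{\kap}$, the Poincar\'e constant and $C_O$ into a single $\overline{C}_2$) produces the claimed inequality. The only delicate point is the use of the Poincar\'e inequality on the computational domain $\Omega_h$: I would justify it from the shape-regularity and the fact that $u - \widetilde{u}_h^* \in H_0^1(\Omega_h)$, so that the Poincar\'e constant depends only on $\mathrm{diam}(\Omega_h)$ and hence is independent of $h$. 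Beyond that, the argument is a bookkeeping exercise that reuses \eqref{eq:est_aux_T1} and \eqref{eq:uh-uhtilde} already developed in the proof of Lemma \ref{lem:q-qh}.
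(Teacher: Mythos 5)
Your proposal is correct and follows essentially the same route as the paper: decompose via the Oswald interpolant, use Poincar\'e on $u-\wtil{u}^*_h\in H_0^1(\compD)$, convert $\nabla u$ to $-\kap^{-1}\mbf{q}$ and add/subtract $\kap^{-1}\mbf{q}_h$ and $\nabla u_h^*$, then absorb the Oswald terms with \eqref{eq:uh-uhtilde}. The only cosmetic difference is that you cite \eqref{eq:est_aux_T1} for the gradient bound while the paper rederives the same add-and-subtract estimate inline.
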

\begin{proof}
First, note that, since $u-\wtil{u}^*_h \in H_0^1(\Omega_h)$, then thanks to the triangle and  Poincar\'e inequalities with constant $c_p$, it follows that 
\[
	\|u-u^*_h\|^2_{\compD} \leq 
2 \,  \| u- \wtil{u}^*_h \|^2_{\compD} + 2\,  \|\wtil{u}^*_h - u^*_h\|^2_{\compD} \leq 2\,c_p^2 \|\nabla u - \nabla \wtil{u}^*_h\|^2_{\Omega_h} +  2 \, \|\wtil{u}^*_h - u^*_h\|^2_{\compD}.
\]
then, since $\mbf{q}= -\kap^{-1}\, \nabla u$ (see \eqref{eq:mixed_a}) adding $\pm \kap^{-1} \mbf{q}_h$, we get
	\begin{align*}
 	\|u-u^*_h\|^2_{\compD} 
	&\leq  4\, c_p^2\,  \underline{\kap}^{-1}  \left( \|\kap^{-1/2} (\mbf{q} - \mbf{q}_h) \|^2_{\compD} +  \|\kap^{-1/2} \mbf{q}_h + \kap^{1/2}\nabla\wtil{u}^*_h \|^2_{\compD} \right) + 2\,   \|\wtil{u}^*_h-u^*_h\|^2_{\compD}.
	\end{align*}
Now, adding $\pm \kap^{-1/2}\, \nabla u_h^*$, results in
	\begin{equation}\label{eq:u-uh* aux}
	\begin{array}{rl}
 	\|u-u^*_h\|^2_{\compD} 
	\leq\,&  4\, c_p^2\, \underline{\kap}^{-1}\,  \|\kap^{-1/2} (\mbf{q} - \mbf{q}_h) \|^2_{\compD} + 8\, c_p^2\, \underline{\kap}^{-1}\,  \|\kap^{-1/2} \mbf{q}_h
	+ \kap^{1/2}\nabla u^*_h \|^2_{\compD}  \\
	& \qquad +2\, \max\{4\, c_p^2\, \underline{\kap}^{-1}\, \overline{\kap}, 1\} \left(   \|\nabla (u_h^*- \wtil{u}^*_h) \|^2_{\compD} +  \, \|\wtil{u}^*_h-u^*_h\|^2_{\compD}\right).
	\end{array}
	\end{equation}
Finally, the proof is concluded by substituting \eqref{eq:uh-uhtilde} into \eqref{eq:u-uh* aux}. 
\end{proof}
We conclude this part with an estimate for the last term of our error,
\begin{lem}\label{lem:varphi-varphih}
Assume that all previous assumptions are satisfied. Then, there exists a positive constant $ \overline{C}_3$, independent of $h$ such that
    \begin{equation*}
    \begin{array}{rl}
    \|h_e^{-1/2}\, (\varphi-\varphi_h)\|^2_{\Gamma_h} \leq& \displaystyle  \overline{C}_3\,   \max_{e\in \mc{E}_h^{\partial}} \{ r_e^2, r_e\, (C_{ext}^e)^2\}\, \Bigg( \|\kap^{-1/2} (\mbf{q} - \mbf{q}_h)\|^2_{\compD} + h^2\, L^2\,  \|u-u_h^*\|_{\compD}^2 \\[2ex]
    &\qquad \displaystyle  + \sum_{T\in \mc{T}_h}  h_T^2\, \|P_W \mc{F}(u_h^*)-\nabla\cdot  \mbf{q}_h\|^2_T + \text{osc}^2(\mc{F}, \mc{T}_h) \Bigg).
    \end{array}
    \end{equation*}
\end{lem}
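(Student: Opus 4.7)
The target is an upper bound for $\|h_e^{-1/2}(\varphi - \varphi_h)\|^2_{\Gamma_h}$, so it is natural to start from the definitions \eqref{def: varphi} and \eqref{def:varphi_h} of the exact and discrete transferred data. Under Assumption \eqref{eq:S1} they give
\[
\varphi(\mbf{x}) - \varphi_h(\mbf{x}) = \int_0^{l(\mbf{x})} \kap^{-1}(\mbf{x})(\mbf{q} - \mbf{q}_h)(\mbf{x} + \mbf{n} s)\cdot \mbf{n}\, ds, \qquad \mbf{x} \in \Gamma_h,
\]
and adding and subtracting $(\mbf{q} - \mbf{q}_h)(\mbf{x})\cdot \mbf{n}$ inside the integrand, together with the definition \eqref{def:delta} of $\delta_{\mbf{v}}$, reproduces the same trick that led to \eqref{eq:varphiq}:
\[
\varphi - \varphi_h = \kap^{-1} l\, \delta_{\mbf{q}-\mbf{q}_h} + \kap^{-1} l\, (\mbf{q}-\mbf{q}_h) \cdot \mbf{n} \qquad \text{on } \Gamma_h.
\]
Squaring, multiplying by $h_e^{-1}$, using $l(\mbf{x}) \lesssim R h_e$ from \eqref{eq:S2}, and summing over $e \in \mc{E}_h^{\partial}$ would then reduce the task to controlling two families of face integrals, $\sum_e \|l^{1/2}\delta_{\mbf{q}-\mbf{q}_h}\|^2_e$ and $\sum_e \|l^{1/2}(\mbf{q}-\mbf{q}_h)\cdot \mbf{n}\|^2_e$.

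For the $\delta$-piece, since $\mbf{q} \in H^1$ by regularity and $\mbf{q}_h$ is polynomial (hence smooth) on $T^e_{ext}$, the difference lies in $H^1(T^e_{ext})$ and the $H^1$-bound \eqref{ineq:delta H1} applies:
\[
\|l^{1/2}\delta_{\mbf{q}-\mbf{q}_h}\|_e \leq \tfrac{1}{\sqrt{3}}\, r_e\, \|h_e^{\perp}\partial_n(\mbf{q}-\mbf{q}_h)\cdot \mbf{n}\|_{T^e_{ext}}.
\]
This accounts for the $r_e^2$ factor. To turn the right-hand side into the quantities listed in the statement I would use the pointwise identity $\nabla \cdot \mbf{v} = \partial_n(\mbf{v}\cdot\mbf{n}) + \partial_\tau(\mbf{v}\cdot \bsy{\tau})$, substitute $\nabla \cdot \mbf{q} = \mc{F}(u)$ from \eqref{eq:mixed_b}, and telescope
\[
\mc{F}(u) - \nabla \cdot \mbf{q}_h = \big(\mc{F}(u)-\mc{F}(u_h^*)\big) + \big(\mc{F}(u_h^*) - P_W\mc{F}(u_h^*)\big) + \big(P_W\mc{F}(u_h^*) - \nabla\cdot\mbf{q}_h\big).
\]
Lipschitz continuity applied to the first summand produces $L\|u - u_h^*\|$; together with the factor $(h_e^\perp)^2 \lesssim h^2$ inherited from the $\delta$-bound this yields the $h^2 L^2 \|u - u_h^*\|^2_{\compD}$ contribution. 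The second summand is exactly $\text{osc}^2(\mc{F}, \mc{T}_h)$, and the third is the elementwise residual $\sum_T h_T^2\|P_W\mc{F}(u_h^*)-\nabla\cdot\mbf{q}_h\|^2_T$.

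For the trace piece $\|l^{1/2}(\mbf{q}-\mbf{q}_h)\cdot\mbf{n}\|^2_e$, I would use the decomposition
\[
\mbf{q}-\mbf{q}_h = -\kap\nabla(u - u_h^*) - (\kap\nabla u_h^* + \mbf{q}_h),
\]
which follows from $\mbf{q} = -\kap\nabla u$ (equation \eqref{eq:mixed_a}). A standard trace inequality absorbs the smooth first summand into $\|\kap^{-1/2}(\mbf{q}-\mbf{q}_h)\|_{T^e}$, while the second summand restricted to $T^e$ is polynomial and can be extended to $T^e_{ext}$ by paying the factor $r_e(C^e_{ext})^2$ dictated by the definition of $C^e_{ext}$ in terms of $\chi \cdot \mbf{n}_e$. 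Together with the bound $l \lesssim Rh_e$ this gives the $\|\kap^{-1/2}(\mbf{q}-\mbf{q}_h)\|^2_{\compD}$ contribution in the statement.

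The principal obstacle will be handling the tangential derivative $\partial_\tau(\mbf{v}\cdot\bsy{\tau})$ that appears when rewriting $\partial_n(\mbf{v}\cdot\mbf{n})$ in terms of $\nabla\cdot \mbf{v}$, since \eqref{ineq:delta H1} delivers only the normal component. I expect to manage it by again splitting as above: the smooth part $-\kap\nabla(u-u_h^*)$ lets the tangential derivative be controlled directly by $\|\mbf{q}-\mbf{q}_h\|$ after applying $\mbf{q} = -\kap\nabla u$ and Lipschitz continuity, while for the polynomial part $\kap\nabla u_h^* + \mbf{q}_h$ an inverse estimate on $T^e_{ext}$ (controlled through $C^e_{inv}$, $C^e_{ext}$ and absorbed via Assumption \eqref{eq:S4}) converts the tangential derivative back to the $L^2$ norm $\|\kap^{1/2}\nabla u_h^* + \kap^{-1/2}\mbf{q}_h\|_T$ already built into $\eta_T$. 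Collecting all contributions and pulling $\max_e\{r_e^2, r_e(C^e_{ext})^2\}$ out of the face-wise sum yields the claimed inequality.
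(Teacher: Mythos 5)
Your starting point and your end game are both sound: the difference $\varphi-\varphi_h$ is indeed a line integral of $\kap^{-1}(\mbf{q}-\mbf{q}_h)$ over the transferring segments, and the telescoping $\mc{F}(u)-\nabla\cdot\mbf{q}_h=(\mc{F}(u)-\mc{F}(u_h^*))+(\mc{F}(u_h^*)-P_W\mc{F}(u_h^*))+(P_W\mc{F}(u_h^*)-\nabla\cdot\mbf{q}_h)$ is exactly how the paper produces the $h^2L^2\|u-u_h^*\|^2_{\compD}$, oscillation and residual terms (see \eqref{eq:div q-qh}). The gap is in the middle. By routing the argument through $\delta_{\mbf{q}-\mbf{q}_h}$ and the $H^1$ bound \eqref{ineq:delta H1}, you are forced to control $\|h_e^{\perp}\partial_n((\mbf{q}-\mbf{q}_h)\cdot\mbf{n})\|_{T^e_{ext}}$, i.e.\ a \emph{normal} derivative of the nonpolynomial function $\mbf{q}-\mbf{q}_h$ on the exterior patch. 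Rewriting $\partial_n(\mbf{v}\cdot\mbf{n})=\nabla\cdot\mbf{v}-\partial_{\tau}(\mbf{v}\cdot\bsy{\tau})$ leaves the tangential remainder $\partial_{\tau}((\mbf{q}-\mbf{q}_h)\cdot\bsy{\tau})$, and your plan for absorbing it does not work: for the non-polynomial part $-\kap\nabla(u-u_h^*)$ this is a second derivative of $u-u_h^*$ on $\compD^c$, which cannot be ``controlled directly by $\|\mbf{q}-\mbf{q}_h\|$''---there is no inverse inequality for non-polynomial functions, and the Lipschitz continuity of $\mc{F}$ says nothing about derivatives of $\mbf{q}$. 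No term on the claimed right-hand side dominates such a quantity, so the argument cannot close. (A secondary, lesser issue: your treatment of the trace piece would also inject $\|\kap^{1/2}\nabla u_h^*+\kap^{-1/2}\mbf{q}_h\|_T$ into the bound, which is not among the terms in the statement.)

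The paper's proof avoids differentiating $\mbf{q}-\mbf{q}_h$ in any single direction. It first applies Cauchy--Schwarz along the transferring segments to get $\|h_e^{-1/2}(\varphi-\varphi_h)\|^2_{\Gamma_h}\lesssim\|\kap^{-1/2}(\mbf{q}-\mbf{q}_h)\|^2_{\compD^c}$ (no $\delta$, no derivative at all), and then pulls this exterior $L^2$ norm back to $\compD$ by splitting $\mbf{q}-\mbf{q}_h=(\mbf{q}-Q_k(E(\mbf{q})))+(Q_k(E(\mbf{q}))-\mbf{q}_h)$, where $E$ is an $\mbf{H}(\textbf{div})$ extension and $Q_k$ the averaged Taylor polynomial. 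The polynomial part is transferred from $T^e_{ext}$ to $T^e$ at the price of $r_e(C_{ext}^e)^2$; the remainder is handled by the $\mbf{H}(\textbf{div})$ continuity bound \eqref{eq: estim_operator E a}, which costs $r_e^2$ and produces only the \emph{full divergence} $\|\nabla\cdot(\mbf{q}-\mbf{q}_h)\|_{T^e}$ on the \emph{interior} element, where the telescoping above applies cleanly and leaves no tangential residue. If you want to salvage your outline, replace the $\delta$/normal-derivative step with this extension argument; as written, the tangential-derivative leftover is fatal.
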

\begin{proof}
We first notice that this term depends on what happens in the domain $\compD^c$, that is
    \begin{align}\label{eq:varphi-varphih aux}
    \|h_e^{-1/2}\, (\varphi - \varphi_h)\|^2_{\Gamma_h} \lesssim \|\kap^{-1/2}(\mbf{q}-\mbf{q}_h)\|^2_{\compD^c}.
    \end{align}
Then, for each $T \in \mc{T}_h$, we have
    \begin{equation}\label{eq:div q-qh}
    \begin{array}{rl}
    h_T\|\nabla \cdot (\mbf{q}- \mbf{q}_h)\|_{T} =&  h_T\|\mc{F}(u)-\nabla\cdot  \mbf{q}_h\|_{T}  \\[2ex]
    \leq& h_T\|P_W \mc{F}(u_h^*)-\nabla\cdot  \mbf{q}_h\|_{T} + h_T\|\mc{F}(u)-\mc{F}(u_h^*)\|_{T} + h_T \|P_W \mc{F}(u_h^*) - \mc{F}(u_h^*) \|_T .
    \end{array}
    \end{equation}
Now we will need to consider the approximation error measured in a function space with additional regularity. For $T\in \mc{T}_h$ let $E_T: \mbf{H}(\textbf{div};T) \to \mbf{H}(\textbf{div};\md{R}^d)$ be any local extension operator, and $Q_k(E_T(\mbf{q}))\in\mathbb P_k(\mathbb R^d)$ the averaged averaged Taylor polynomial of degree $k$ introduced in the proof of Lemma \ref{lem:prop in W_h}. Let also  $E: \mbf{H}(\textbf{div};\mc{T}_h) \to \mbf{H}(\textbf{div};\md{R}^d)$ be a global extension such that $E(\mbf{v})\vert_T := E_T(\mbf{v})$ for all $T\in \mc{T}_h$ and $\mbf{v}\in \mbf{H}(\textbf{div};\mc{T}_h)$. Note that 
    \begin{align*}
    \|\mbf{q}-\mbf{q}_h\|_{\compD^c} &\leq \|Q_k(E(\mbf{q}))-\boldsymbol q_h\|_{\compD^c} + \|\mbf{q}-Q_k(E(\mbf{q}))\|_{\compD^c} \\
     &= \|\mbf{q}-Q_k(E(\mbf{q}))\|_{\compD^c} + \left( \sum_{e\in \mc{E}_h^{\partial}} \|Q_k(E(\mbf{q}))-\mbf{q}_h\|^2_{T^e_{ext}}  \right)^{1/2} \\
    &\leq  \|E(\mbf{q}) - Q_k(E(\mbf{q}))\|_{\compD^c} + \left( \sum_{e\in \mc{E}_h^{\partial}} r_e\, (C_{ext}^e)^2 \|Q_k(E(\mbf{q}))-\mbf{q}_h\|^2_{T^e}  \right)^{1/2}.
    \end{align*}
Since $\|Q_k(E(\mbf{q}))-\mbf{q}_h\|^2_{T^e}=\|Q_k(E(\mbf{q})-\mbf{q}_h)\|^2_{T^e}\lesssim\|E(\mbf{q})-\mbf{q}_h\|^2_{T^e}=\|\mbf{q}-\mbf{q}_h\|^2_{T^e} $, we obtain
    \begin{align*}
    \|\mbf{q}-\mbf{q}_h\|_{\compD^c} &\lesssim\|E(\mbf{q}) - Q_k(E(\mbf{q}))\|_{\compD^c} +   \max_{e\in \mc{E}_h^{\partial}} \{r_e^{1/2}\, C_{ext}^e\}  \|\mbf{q}-\mbf{q}_h\|_{\compD}. 
    \end{align*}
Moreover, to bound the first term on the right hand side, we observe that
  \begin{align*} 
  \|E_{T^e}(\mbf{q}) - Q_k(E_{T^e}(\mbf{q}))\|^2_{T^e_{ext}}
  &= \|E_{T^e}(\mbf{q}-\mbf{q}_h) - Q_k(E_{T^e}(\mbf{q})-E_{T^e}(\mbf{q_h}))\|_{T^e_{ext}}^2\\
  &\lesssim \|E_{T^e}(\mbf{q}-\mbf{q}_h)\|_{T^e_{ext}}^2 \lesssim  r_e^2 \, \|\kap^{-1/2}( \mbf{q}-\mbf{q}_h) \|_{T^e} + r_e^2\, h_T^2\, \|\nabla\cdot (\mbf{q} - \mbf{q}_h)\|^2_{T^e},
   \end{align*}
where we have used the estimate in \eqref{eq: estim_operator E a}. Thus,
    \begin{align*}
 \|\kap^{-1/2}\, (\mbf{q} - \mbf{q}_h)\|_{\compD^c}
    &\lesssim \displaystyle  \max_{e\in \mc{E}_h^{\partial}} \{ r_e^2, r_e\, (C_{ext}^e)^2\}\, \left( \|\kap^{-1/2} (\mbf{q} - \mbf{q}_h)\|^2_{\compD} + h_T^2 \|\nabla \cdot (\mbf{q} - \mbf{q}_h)\|_{\compD}^2 \right).
    \end{align*}
The result follows combining the last inequality with \eqref{eq:varphi-varphih aux}.
\end{proof}
With all the pieces in place, we can now show that the error in the flux can be successfully estimated if one considers the data oscillation.

\begin{thm}\label{thm:reliable}
Assume that the hypotheses of Lemmas \ref{lem:q-qh}--\ref{lem:varphi-varphih} hold. In addition, if 
    \begin{subequations}\label{asumptions}
    \begin{align}
     \overline{C}_1\,  \overline{C}_3\, \max_{e\in \mc{E}_h^{\partial}} \{ r_e^2, r_e\, (C_{ext}^e)^2\}   &< 1/2 ,\label{assumption1 thm_reliable} \\[2ex]
     \overline{C}_2\, L\, (L\, h^2 + 2\,  \overline{C}_1) &< 1/2, \label{assumption2 thm_reliable} \\[2ex]
    (2\,  \overline{C}_2+1)\,   \overline{C}_3\, \max_{e\in \mc{E}_h^{\partial}} \{ r_e^2, r_e\, (C_{ext}^e)^2\} &< 1/2 \label{assumption3 thm_reliable},
    \end{align}
    \end{subequations}
where $ \overline{C}_i, \ i\in \{1,2,3\}$ are defined in the Lemmas \ref{lem:q-qh}, \ref{lem:uh-uh*} and \ref{lem:varphi-varphih}. Then, there exists a positive constant $C_{rel}$, such that 
    \begin{equation*}
   \aposteerror^2 \leq C_{rel} \left( \eta^2 +  \text{osc}^2(\mc{F},\mc{T}_h) \right).
   	\end{equation*}
\end{thm}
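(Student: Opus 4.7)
The plan is to assemble the three preparatory estimates from Lemmas \ref{lem:q-qh}, \ref{lem:uh-uh*}, and \ref{lem:varphi-varphih} into a closed system for the three ingredients of $\aposteerror^2$ and use each of the smallness hypotheses \eqref{assumption1 thm_reliable}--\eqref{assumption3 thm_reliable} to absorb one unwanted term. Introduce the shorthand $A := \|\kap^{-1/2}(\mbf{q}-\mbf{q}_h)\|^2_{\compD}$, $B := \|u-u_h^*\|^2_{\compD}$, $C := \|h_e^{-1/2}(\varphi-\varphi_h)\|^2_{\Gamma_h}$, and collect the estimator-plus-oscillation contributions into $E := \eta^2 + \mathrm{osc}^2(\mc{F},\mc{T}_h)$; set also $M := \max_{e\in\mc{E}_h^\partial}\{r_e^2, r_e(C_{ext}^e)^2\}$. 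Schematically, the three lemmas read
\[
A \;\leq\; \overline{C}_1\bigl(E + C + L\,B\bigr),\qquad B \;\leq\; \overline{C}_2\bigl(E + A + C\bigr),\qquad C \;\leq\; \overline{C}_3\,M\,\bigl(A + h^2 L^2 B + E\bigr),
\]
(up to harmless constants hidden in the passage from the actual estimator terms to $E$).

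First, substitute the $C$-bound from Lemma \ref{lem:varphi-varphih} into the $A$-bound of Lemma \ref{lem:q-qh}. On the right-hand side this produces the term $\overline{C}_1\overline{C}_3 M\,A$ which, by \eqref{assumption1 thm_reliable}, is bounded by $\tfrac12 A$ and can therefore be absorbed into the left-hand side. After absorption one arrives at an inequality of the form $A \leq \alpha_1 E + \alpha_2 L B$, where $\alpha_2$ carries precisely the combination $(Lh^2 + 2\overline{C}_1)$ that motivates hypothesis \eqref{assumption2 thm_reliable}.

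Next, insert both this refined bound for $A$ and the $C$-bound of Lemma \ref{lem:varphi-varphih} into the $B$-bound of Lemma \ref{lem:uh-uh*}. Two kinds of $B$-terms appear on the right: a contribution of size $\overline{C}_2 L(Lh^2 + 2\overline{C}_1) B$ arising from the chain Lemma~\ref{lem:q-qh}$\to$Lemma~\ref{lem:varphi-varphih}$\to$Lemma~\ref{lem:uh-uh*}, and a contribution of size $(2\overline{C}_2+1)\overline{C}_3 M$ times a linear combination of $A$ and $E$ arising from the $C$-substitution. Hypothesis \eqref{assumption2 thm_reliable} controls the first and lets it be absorbed into $\tfrac12 B$, while hypothesis \eqref{assumption3 thm_reliable} keeps the prefactor of the second strictly smaller than $\tfrac12$, so no new $A$- or $B$-dependence is reintroduced via \eqref{assumption1 thm_reliable} when collecting terms. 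After absorbing the $B$ contributions one obtains a clean bound $B \leq \beta E$ with $\beta$ depending only on $\overline{C}_1,\overline{C}_2,\overline{C}_3,M,L,h$.

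Finally, back-substitute $B \leq \beta E$ into the refined inequality for $A$ to get $A \leq \gamma E$, and then into Lemma \ref{lem:varphi-varphih} to get $C \leq \delta E$. Adding the three bounds yields $\aposteerror^2 = A + B + C \leq (\beta+\gamma+\delta)\,E$, which is the desired reliability estimate with $C_{rel} := \beta+\gamma+\delta$. The main obstacle is the bookkeeping: one must verify that the three hypotheses \eqref{assumption1 thm_reliable}--\eqref{assumption3 thm_reliable} are \emph{exactly} what the three successive absorption steps require, and in particular that the way the factor $h^2 L^2$ from Lemma \ref{lem:varphi-varphih} combines with the factor $L$ from Lemma \ref{lem:q-qh} produces the combination $L(Lh^2+2\overline{C}_1)$ that features in \eqref{assumption2 thm_reliable}; once this bookkeeping is done, the remaining manipulations are routine Young inequalities and substitutions.
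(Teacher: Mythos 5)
Your proposal is correct and follows the same overall strategy as the paper: combine Lemmas \ref{lem:q-qh}, \ref{lem:uh-uh*} and \ref{lem:varphi-varphih} by successive substitution and use each of \eqref{assumption1 thm_reliable}--\eqref{assumption3 thm_reliable} to absorb one unwanted term, exactly producing the combination $L(Lh^2+2\overline{C}_1)$ in the first absorption. The only genuine difference is the middle step. The paper does \emph{not} substitute the bound for $\|h_e^{-1/2}(\varphi-\varphi_h)\|^2_{\Gamma_h}$ into Lemma \ref{lem:uh-uh*} at that stage; it keeps that quantity symbolic, derives $\|u-u_h^*\|^2_{\compD}\leq 2\overline{C}_2\,\|h_e^{-1/2}(\varphi-\varphi_h)\|^2_{\Gamma_h}+\cdots$ and $\|\kap^{-1/2}(\mbf{q}-\mbf{q}_h)\|^2_{\compD}\leq \|h_e^{-1/2}(\varphi-\varphi_h)\|^2_{\Gamma_h}+\cdots$, adds the two, and only then invokes Lemma \ref{lem:varphi-varphih}, so that $(2\overline{C}_2+1)\,\overline{C}_3\max_{e}\{r_e^2,r_e(C_{ext}^e)^2\}$ is \emph{exactly} the coefficient to be absorbed and \eqref{assumption3 thm_reliable} is used verbatim. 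In your ordering, substituting the $\varphi$-bound into the $u$-bound leaves a residual flux term that must be converted once more through the refined flux bound, and the coefficient of $\|u-u_h^*\|^2_{\compD}$ to be absorbed becomes $\overline{C}_2(1+\overline{C}_3M)L(Lh^2+2\overline{C}_1)+\overline{C}_2\overline{C}_3Mh^2L^2$ with $M:=\max_{e}\{r_e^2,r_e(C_{ext}^e)^2\}$; this is indeed strictly below $1$ under \eqref{assumption2 thm_reliable}, \eqref{assumption3 thm_reliable} and $Lh\leq 1$, so your argument closes, but it answers your own bookkeeping worry in the negative: with your ordering the hypotheses are sufficient rather than ``exactly'' the absorption thresholds. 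The paper's ordering is what makes each hypothesis correspond cleanly to one absorption; otherwise the two routes are equivalent and yield the same conclusion.
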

\begin{proof}
We first replace the estimation of the Lemma \ref{lem:varphi-varphih} into Lemma \ref{lem:q-qh} and, together with assumption \eqref{assumption1 thm_reliable}, obtain
    \begin{equation}\label{eq:q-qh 2}
    \|\kap^{-1/2}\, (\mbf{q} - \mbf{q}_h) \|^2_{\compD} \leq L\, (L\, h^2 + 2\,  \overline{C}_1)\, \|u-u_h^*\|^2_{\compD} +  (2\,  \overline{C}_1+1) \left( \text{osc}^2(\mc{F},\mc{T}_h) +	\eta^2 \right).
    \end{equation}
Combining  the assumption \eqref{assumption2 thm_reliable} with \eqref{eq:q-qh 2} into the Lemma \ref{lem:uh-uh*}, we obtain
    \begin{equation}\label{eq:u-uh* 2}
    \|u-u_h^*\|^2_{\compD} \leq 2\, \overline{C}_2\, \|h_e^{-1/2}\, (\varphi-\varphi_h)\|^2_{\Gamma_h} + 2\, ( \overline{C}_1 + 1)\, \overline{C}_2 \, \left( \text{osc}^2(\mc{F},\mc{T}_h) +	\eta^2 \right).
    \end{equation}
Note that, thanks to \eqref{eq:u-uh* 2} and assumption \eqref{assumption2 thm_reliable}, the estimation \eqref{eq:q-qh 2} can be rewritten as 
    \begin{equation}\label{eq:q-qh 3}
    \|\kap^{-1/2}\, (\mbf{q} - \mbf{q}_h) \|^2_{\compD} \leq \|h_e^{-1/2}\, (\varphi-\varphi_h)\|^2_{\Gamma_h} + (3\, \overline{C}_1 + 2)\, \left( \text{osc}^2(\mc{F},\mc{T}_h) +	\eta^2 \right).
    \end{equation}
Combining \eqref{eq:u-uh* 2} with \eqref{eq:q-qh 3} and using the Lemma \ref{lem:varphi-varphih} again, we arrives at
    \begin{align*}
     &\|\kap^{-1/2}\, (\mbf{q} - \mbf{q}_h) \|^2_{\compD}  + \|u-u_h^*\|^2_{\compD} \\
     &\quad \leq (2\,  \overline{C}_2+1)\,   \overline{C}_3\, \max_{e\in \mc{E}_h^{\partial}} \{ r_e^2, r_e\, (C_{ext}^e)^2\} \, \left( \|\kap^{-1/2}\, (\mbf{q} - \mbf{q}_h) \|^2_{\compD}  + \|u-u_h^*\|^2_{\compD}  \right) + \what{c}\,\left( \text{osc}^2(\mc{F},\mc{T}_h) +	\eta^2 \right).
    \end{align*}
Then, by assumption \eqref{assumption3 thm_reliable}, we deduce
    \begin{equation*}
    \|\kap^{-1/2}\, (\mbf{q} - \mbf{q}_h) \|^2_{\compD}  + \|u-u_h^*\|^2_{\compD} \lesssim \text{osc}^2(\mc{F},\mc{T}_h) +	\eta^2. 
    \end{equation*}
Finally, observe that the above estimation allows us rewritten the Lemma \ref{lem:varphi-varphih} as
    \begin{equation*}
    \|h_e^{-1/2} \, (\varphi - \varphi_h)\|^2_{\Gamma_h} \lesssim \text{osc}^2(\mc{F},\mc{T}_h) +	\eta^2. 
    \end{equation*}
which concludes the proof.
\end{proof}

Having established the reliability of the estimator we can now adapt arguments from the linear case to show that the estimator is locally efficient as well. This will follow readily from the following estiamtes.

\begin{thm}\label{thm:efficiency}
Suppose that $L\, h\leq 1$. Then we can assert the following local estimates
    \begin{align*}
    \|\kap^{-1/2}\, \mbf{q}_h + \kap^{1/2}\, \nabla u_h^*\|_T &\lesssim \|\kap^{-1/2}\, (\mbf{q} - \mbf{q}_h) \|_T, \\
    h_e^{-1} \|\jump{u_h^*}\|_e^2 &\lesssim \!\!\!\sum_{T\in \mc{U}_h(e)} \!\!\!\ \|\kap^{-1/2} ( \mbf{q} - \mbf{q}_h)\|^2_T \quad \forall \, e\in \mc{E}_h^{\circ}, \\
    h_e^{-1} \, \|\varphi_h - u_h^*\|_e &\lesssim \displaystyle  \sum_{T\in \mc{U}_h(e)}  \|\kap^{-1/2}(\mbf{q} - \mbf{q}_h)\|_T^2 + \| h_e^{-1/2}\, (\varphi - \varphi_h)\|^2_{e} \quad \forall \, e\in \mc{E}_h^{\partial},  \\
    h_e\| \jump{\mbf{q}_h}\|_e^2 &\lesssim \displaystyle \sum_{T\in \mc{U}_h(e)}   \left( \|\kap^{-1/2} ( \mbf{q} - \mbf{q}_h)\|^2_T + h_T^2\|P_W \mc{F}(u_h^*) - \nabla\cdot \mbf{q}_h\|_T^2 \right. \\
    &\qquad + \|u- u_h^*\|^2_T \Bigg) + \text{osc}^2(\mc{F},\mc{U}_h(e)) \qquad \forall \, e\in \mc{E}_h^\circ \\
    h_T^2\, \|P_W \mc{F}(u^*_h) - \nabla \cdot \mbf{q}_h\|_T  &\lesssim \|\kap^{-1/2} ( \mbf{q} - \mbf{q}_h)\|^2_T + h_T^2 \, \|P_W \mc{F}(u_h^*) - \mc{F}(u_h^*)\|^2_T + \|u-u_h^*\|_T^2 .
    \end{align*}
\end{thm}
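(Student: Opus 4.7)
The plan is to adapt the bubble-function machinery developed for the linear case in \cite{aposteriori2}, with modifications to account for (i) the non-linearity of $\mathcal{F}$, controlled via the Lipschitz constant $L$ and the data-oscillation term, and (ii) the curved boundary, which introduces the $\|h_e^{-1/2}(\varphi-\varphi_h)\|_e$ contribution in the third estimate. I would address the five inequalities sequentially, as later ones rely on earlier ones.

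For the first estimate I exploit the identity $\boldsymbol q=-\kappa\nabla u$ to write
\[
\kappa^{-1/2}\boldsymbol q_h+\kappa^{1/2}\nabla u_h^* = \kappa^{-1/2}(\boldsymbol q_h-\boldsymbol q)+\kappa^{1/2}\nabla(u_h^*-u),
\]
reducing the task to bounding the gradient error by the flux error. Subtracting the local weak form of the PDE on $T$ from the post-processing equation \eqref{eq:post_processing1} yields, for any $w\in\mathbb P_{k+1}(T)$,
\[
(\kappa\nabla(u_h^*-u),\nabla w)_T = -(\boldsymbol q_h-\boldsymbol q,\nabla w)_T + (\mathcal{F}(u_h)-\mathcal{F}(u_h^*),w)_T.
\]
Testing with $w=u_h^*-c$ for a constant $c$ chosen so that Poincar\'e's inequality applies, and using the Lipschitz bound together with the smallness hypothesis $Lh\leq 1$, lets me absorb the non-linear contribution via Young's inequality and recover the claim.

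The second and third estimates follow by exploiting the continuity of $u$ across interior faces ($\jump{u}=0$) and the identity $u=\varphi$ on $\Gamma_h$. Writing $\jump{u_h^*}=-\jump{u-u_h^*}$ in the first case and $\varphi_h-u_h^*=(\varphi_h-\varphi)+(u-u_h^*)$ in the second, and applying the standard trace estimate $h_e^{-1}\|v\|_e^2\lesssim h_T^{-2}\|v\|_T^2+|v|_{1,T}^2$, reduces the problem to controlling $u-u_h^*$ on each adjacent element: the gradient part is the first estimate, and the $L^2$ part is handled by combining that bound with a Poincar\'e argument that takes advantage of the compatibility condition \eqref{eq:post_processing2}.

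The final two estimates rely on the classical Verf\"urth bubble construction. For the element residual $r_T:=P_W\mathcal{F}(u_h^*)-\nabla\cdot\boldsymbol q_h$ I take $\psi_T=b_T r_T$ with $b_T$ the interior bubble of $T$, so that $\|r_T\|_T^2\lesssim (r_T,\psi_T)_T$. Using $\nabla\cdot\boldsymbol q=\mathcal{F}(u)$ and integrating by parts,
\[
(r_T,\psi_T)_T = (P_W\mathcal{F}(u_h^*)-\mathcal{F}(u_h^*),\psi_T)_T + (\mathcal{F}(u_h^*)-\mathcal{F}(u),\psi_T)_T + (\boldsymbol q-\boldsymbol q_h,\nabla\psi_T)_T,
\]
the boundary contributions vanishing because $b_T|_{\partial T}=0$. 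The first term produces the oscillation, the second is bounded by $L\|u-u_h^*\|_T\|\psi_T\|_T$, and the last by the flux error together with the inverse estimate $\|\nabla\psi_T\|_T\lesssim h_T^{-1}\|\psi_T\|_T$; multiplying by $h_T$ and using $\|\psi_T\|_T\lesssim \|r_T\|_T$ yields the claim. The flux-jump inequality follows analogously with an edge bubble $b_e$ lifted to $\mc U_h(e)$, exploiting $\jump{\boldsymbol q}=0$ and integrating by parts element-wise to reduce everything to the element residual just controlled plus a flux contribution on the patch. The main obstacle throughout is ensuring that the non-linear coupling arising from each instance of $\mathcal{F}(u)-\mathcal{F}(u_h^*)$ does not spoil the efficiency bounds; the hypothesis $Lh\leq 1$ is precisely what allows the $h_T$-scaling of the bubble test functions to tame the Lipschitz constant, permitting absorption of the $u-u_h^*$ term in the first estimate and leaving only an $O(1)$ coefficient in front of $\|u-u_h^*\|_T^2$ in the last two.
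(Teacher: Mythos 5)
Your bubble-function treatment of the last two estimates---element bubble for the residual $P_W\mc{F}(u_h^*)-\nabla\cdot\mbf{q}_h$, face bubble for $\jump{\mbf{q}_h}$, with the oscillation and the Lipschitz contribution $L\|u-u_h^*\|_T$ split off and the hypothesis $L\,h\le 1$ used to keep an $O(1)$ coefficient in front of $\|u-u_h^*\|_T^2$---is essentially the paper's argument and is sound. The genuine gap is in the second and third estimates. Writing $\jump{u_h^*}=-\jump{u-u_h^*}$ (resp. $\varphi_h-u_h^*=(\varphi_h-\varphi)+(u-u_h^*)$) and invoking the trace bound $h_e^{-1}\|v\|_e^2\lesssim h_T^{-2}\|v\|_T^2+|v|_{1,T}^2$ leaves you with the term $h_T^{-2}\|u-u_h^*\|_T^2$, and your plan to remove it by Poincar\'e plus the compatibility condition \eqref{eq:post_processing2} does not close: \eqref{eq:post_processing2} forces $P_0 u_h^*=P_0 u_h$, not $P_0 u_h^*=P_0 u$, so the element mean of $u-u_h^*$ equals that of $u-u_h$ and is not controlled by the local flux error. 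A contribution carrying the weight $h_T^{-2}$ in front of $\|u-u_h^*\|_T^2$ is not admissible in an efficiency bound, and no local argument bounds $h_T^{-1}\|P_0(u-u_h)\|_T$ by the quantities on the right-hand side of the claimed estimates.

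The device you are missing is the splitting $\jump{u_h^*}=P_{M_0}\jump{u_h^*}+(Id_M-P_{M_0})\jump{u_h^*}$, where $P_{M_0}$ denotes the facewise $L^2$ projection onto constants. The zero-face-mean part is exactly where your trace/Poincar\'e argument works: it yields $|u-u_h^*|_{1,T}^2$ with no negative power of $h_T$, which is then reduced to the flux error via the first estimate and \eqref{eq:nabla u-uh*}. The face-average part cannot be reached this way at all; the paper bounds it by testing the HDG equation \eqref{eq:HDG_a} with a lowest-order Raviart--Thomas field whose normal trace matches $P_{M_0}(\varphi_h-u_h^*)$ (resp. $P_{M_0}\jump{u_h^*}$) on the face $e$ and vanishes on the remaining faces, using \eqref{eq:post_processing2} to replace $u_h$ by $u_h^*$ against $\nabla\cdot\mbf{v}\in\md{P}_0(T)$, which produces the bound by $\sum_{T\in\mc{U}_h(e)}\|\kap^{-1/2}\mbf{q}_h+\kap^{1/2}\nabla u_h^*\|_T^2$. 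Without this step the second and third estimates are not proved. (A smaller issue: for the first estimate, testing with $w=u_h^*-c$ only produces $(\kap\nabla u_h^*+\mbf{q}_h,\nabla u_h^*)_T$, not the square $\|\kap^{1/2}\nabla u_h^*+\kap^{-1/2}\mbf{q}_h\|_T^2$, because $\kap^{-1}\mbf{q}_h$ is not a polynomial gradient for variable $\kap$; the paper sidesteps this by citing the corresponding lemma of Cockburn and Zhang.)
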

\begin{proof}
Note that due to the presence of the non-linear source term, the post-processing defining $u_h^*$ is also non linear, and a direct application of the results in \cite[Lemmas 4.4--4.5]{aposteriori2} and \cite[Lemmas 3.4-3.7]{CoZh2014} is not possible. We then proceed to adapt those arguments to the current semi-linear case and treat each of the estimates above separately in what follows. Local efficiency will follow by combining these estimates. 

\paragraph{Bound for $\|\kap^{-1/2}\, \mbf{q}_h + \kap^{1/2}\, \nabla u_h^*\|_T$.}
This term can be bounded by an application  of \cite[Lemma 3.7]{CoZh2014}, that is
    \begin{align}\label{eq:efficiency 1}
    \|\kap^{-1/2}\, \mbf{q}_h + \kap^{1/2}\, \nabla u_h^*\|_T \lesssim \|\kap^{-1/2}\, (\mbf{q} - \mbf{q}_h) \|_T
    \end{align}
%
\paragraph{Bound for $h_e^{-1} \|\jump{u_h^*}\|^2_e$.} 
%
We begin by splitting $\jump{u_h^*}$ into its component in the space $M_0:= \{ \mu\in L^2(\partial \mc{T}_h): \mu|_e\in {\md{P}_0(e)}, \ \forall \,  e\in \mc{E}_h  \}$ and its orthogonal complement.  Considering $P_{M_0}$, the $L^2(\Omega)-$orthogonal projection into $M_0$,  and $Id_M$ the identity operator on the same space we have 
    \begin{align}\label{eq:efficiency jump uh** 1}
    h_e^{-1} \, \|\jump{u_h^*}\|_e^2 \lesssim h_e^{-1}\, \|P_{M_0}\jump{u_h^*}\|^2_e + h_e^{-1}\, \|(Id_M - P_{M_0})\jump{u_h^*}\|^2_e.
    \end{align}
Each of these terms can be bounded by an application of \cite[Lemma 3.4.  and Lemma 3.5.]{CoZh2014} to all the interior faces of the triangulations. That is, for each $e\in \mc{E}_h^\circ$,
    \begin{subequations}\label{eq:BoundForTheJump}
    \begin{align}\label{eq:BoundForTheJumpA}
         h_e^{-1}\,  \|P_{M_0}\jump{u_h^*}\|^2_e &\lesssim \displaystyle  \sum_{T\in \mc{U}_h(e)} \|\kap^{-1/2}\, \mbf{q}_h + \kap^{1/2}\, \nabla u_h^* \|^2_T, \\
         \label{eq:BoundForTheJumpB}
         h_e^{-1}\, \|(Id_M - P_{M_0})\jump{u_h^*}\|^2_e  &\lesssim \displaystyle  \sum_{T\in \mc{U}_h(e)} \|\nabla (u - u_h^*)\|^2_T.
    \end{align}
    \end{subequations}
Now, adding and subtracting  $\kap^{-1/2} \mbf{q}_h$ to $\nabla (u - u_h^*)$ and using the definition of the flux it follows that
    \begin{equation}\label{eq:nabla u-uh*}
    \|\nabla(u-u_h^*)\|^2_T \lesssim \|\kap^{-1/2}\, (\mbf{q} - \mbf{q}_h) \|^2_T +  \|\kap^{-1/2}\, \mbf{q}_h + \kap^{1/2}\, \nabla u_h^* \|^2_T.
    \end{equation}
Moreover, using the fact that $ \|\kap^{-1/2}\, \mbf{q}_h + \kap^{1/2}\, \nabla u_h^* \|^2_T \lesssim \|\kap^{-1/2}\, (\mbf{q} - \mbf{q}_h) \|^2_T$ (see \eqref{eq:efficiency 1}), we can bound the second term above. The same argument can be applied to \eqref{eq:BoundForTheJumpA}, and combining these two results we arrive at
    \begin{align}\label{eq:efficiency jump uh*}
     h_e^{-1} \, \|\jump{u_h^*}\|_e^2 \lesssim \sum_{T\in \mc{U}_h(e)}  \|\kap^{-1/2}\, (\mbf{q} - \mbf{q}_h) \|^2_T \qquad \forall \ e \in \mc{E}_h^{\circ}.
    \end{align}

\paragraph{Bound for $h_e^{-1} \|\varphi_h - u_h^*\|_e^2$.} 
%
First, we define for each $T\in \mc{T}_h$, the local Raviart-Thomas \cite{Gatica:2014} space of order $k$ as
    \begin{equation*}
    \mb{RT}_k(T) \,:=\, [\md{P}_k(T)]^d \oplus \md{P}_k(T)\,\mbf{x},
    \end{equation*}
where  $\md{P}_k(T)$ denotes the space of polynomials of degree at most $k$ defined in $T\in \mc{T}_h$ (see Section \ref{sec:HDG}).

Taking as test in \eqref{eq:HDG_a} $\mbf{v} \in \mb{RT}_0(T)$, it is possible to use the second equation defining the post processing $u_h^*$, \eqref{eq:post_processing2}, for $\nabla \cdot \mbf{v}$ belongs to the space of piece-wise constant functions $\md{P}_0(T)$, to obtain
    \begin{equation*}
    (\kap^{-1}\, \mbf{q}_h, \mbf{v} )_T - (u_h^*, \nabla \cdot \mbf{v})_T + \langle \what{u}_h, \mbf{v} \cdot \mbf{n}  \rangle_{\partial T} = (\kap^{-1}\, \mbf{q}_h + \nabla u_h^*, \mbf{v} )_T + \langle \what{u}_h - u_h^*, \mbf{v} \cdot \mbf{n}  \rangle_{\partial T} = 0.
    \end{equation*}
On the other hand, if we consider $\mbf{v}\in \bH(\textrm{div}, \mc{U}_h(e))$ for each $e\in \mc{E}_h^{\partial}$, then by summing over all $T\in \mc{U}_h(e),$ we arrive at 
    \begin{equation*}
    \sum_{T\in \mc{U}_h(e)} (\kap^{-1}\ \mbf{q}_h + \nabla u_h^*, \mbf{v})_T + \sum_{T\in \mc{U}_h(e)} \sum_{F\in \partial T \setminus e} \langle \what{u}_h - u_h^*, \mbf{v} \cdot \mbf{n}  \rangle_{\partial T} - \langle \varphi_h - u_h^*, \mbf{v}\cdot\boldsymbol n  \rangle_e.
    \end{equation*}
Since  $\mbf{v}\in \bH(\textrm{div}, \mc{U}_h(e))$ is arbitrary, we can choose it such that, on each $T\in \mc{U}_h(e)$, belongs to $\mb{RT}_0(T)$ and satisfies
    \begin{align*}
    \int_e \mbf{v} \cdot \mbf{n} &= \int_e P_{M_0}(\varphi_h- u_h^*)\cdot \mbf{n} \quad &&\text{for the face } e, \\
    \int_F \mbf{v} \cdot \mbf{n} &= 0 \quad &&\forall \, F\in \partial T \setminus e,
    \end{align*}
we obtain
    \begin{align*}
    \|P_{M_0}(\varphi_h- u_h^*)\|_e^2 = \sum_{T\in \mc{U}_h(e)} (\kap^{-1}\ \mbf{q}_h + \nabla u_h^*, \mbf{v})_T.
    \end{align*}
Then, from the Cauchy--Schwarz inequality and a standard scaling argument $\|\mbf{v}\|_T \lesssim h_e^{1/2}\, \|\mbf{v}\cdot\boldsymbol n\|_e$, we get 
    \begin{equation}\label{eq:vaprhi_h-uh* 1}
    h_e^{-1} \, \|P_{M_0}(\varphi_h - u_h^*)\|_e^2 \lesssim \sum_{T\in \mc{U}_h(e)} \|\kap^{-1/2} \mbf{q}_h + \kap^{1/2}\, \nabla u_h^* \|_T^2
    \end{equation}
Now, analogously to \cite[Lema 3.5]{CoZh2014}, it follows that for each boundary face $e\in \Gamma_h$,
    \begin{align*}
    h_e^{-1} \, \|(Id_M - P_{M_0})(\varphi_h-u_h^*)\|_e^2 &=  h_e^{-1} \, \|(Id_M - P_{M_0})(\varphi_h - u_h^*)\|_e^2 \\
    &\lesssim  h_e^{-1} \, \|(Id_M - P_{M_0})(u- u_h^*)\|_e^2 +  h_e^{-1} \, \|(Id_M - P_{M_0})(\varphi - \varphi_h)\|_e^2 \\
    &\lesssim \sum_{T\in \mc{U}_h(e)} \|\nabla (u-u_h^*)\|_T^2 + h_e^{-1}\, \|\varphi - \varphi_h\|_e^2.
    \end{align*}
Therefore, by \eqref{eq:nabla u-uh*} it follows that
    \begin{equation}\label{eq:varphi_h-uh* 2}
    \begin{array}{rl}
    h_e^{-1} \, \|(Id_M - P_{M_0})(\varphi_h-u_h^*)\|_e^2 &\lesssim \displaystyle\sum_{T\in \mc{U}_h(e)} ( \|\kap^{-1/2}(\mbf{q} - \mbf{q}_h)\|_T^2 + \|\kap^{-1/2} \, \mbf{q}_h + \kap^{1/2}\, \nabla u_h^*\|_T^2) \\[2ex]
    &\quad + h_e^{-1}\, \|\varphi - \varphi_h\|_e^2.
    \end{array}
    \end{equation}
Finally, we decompose
\[
\varphi_h - u_h^* = P_{M_0}(\varphi_h - u_h^*) + (Id_M - P_{M_0})(\varphi_h - u_h^*)
\]
and apply \eqref{eq:vaprhi_h-uh* 1} and \eqref{eq:varphi_h-uh* 2}, to arrive at
    \begin{align}
    \nonumber
    h_e^{-1} \, \|\varphi_h - u_h^*\|_e &\lesssim \, \sum_{T\in \mc{U}_h(e)} ( \|\kap^{-1/2}(\mbf{q} - \mbf{q}_h)\|_T^2 + \|\kap^{-1/2} \, \mbf{q}_h + \kap^{1/2}\, \nabla u_h^*\|_T^2)  + h_e^{-1}\, \|\varphi - \varphi_h\|_e^2 \\
    \label{eq:varphih-uh*}
    &\lesssim \,  \sum_{T\in \mc{U}_h(e)}  \|\kap^{-1/2}(\mbf{q} - \mbf{q}_h)\|_T^2 +  \|h_e^{-1/2}\,(\varphi - \varphi_h)\|^2_{e},
    \end{align}
where we have applied \eqref{eq:efficiency 1} in the second line. 
%
\paragraph{Bound for $h_e \|\jump{\mbf{q}_h}\|_e^2$.}
%
For the interior faces, we have that for any $w\in H_0^1(\mc{U}_h(e))$, then
    \begin{align*}
    \langle\jump{\mbf{q}_h}, w\rangle_e &= \sum_{T\in \mc{U}_h(e)} \langle  (\mbf{q} - \mbf{q}_h) \cdot \mbf{n}, w \rangle_{\partial T} =  \sum_{T\in \mc{U}_h(e)}  \Big( (\mbf{q} - \mbf{q}_h) , \nabla w)_T + (\mc{F}(u) - \nabla \cdot \mbf{q}_h, w)_T \Big) \\
    &\leq  \sum_{T\in \mc{U}_h(e)} \Big( \overline{\kap}^{1/2}\,  \|\kap^{-1/2} ( \mbf{q} - \mbf{q}_h)\|_T \, \|\nabla w\|_T + h_T\, \|\mc{F}(u) - \nabla \cdot \mbf{q}_h\|_T \, h_T^{-1}\,  \|w\|_T   \Big) \\
    &\leq  \sum_{T\in \mc{U}_h(e)} \Big( \overline{\kap}^{1/2}\,  \|\kap^{-1/2} ( \mbf{q} - \mbf{q}_h)\|_T + h_T\, \|\mc{F}(u) - \nabla \cdot \mbf{q}_h\|_T \Big) \Big( \|\nabla w\|_T +  \, h_T^{-1}\,  \|w\|_T   \Big).
    \end{align*}
By choosing a test function of the form $w=B_e\jump{\mbf{q}_h}\in \md{P}_{k+d}(T)$, whith being $B_e$ is a face bubble function defined in \eqref{ineq:bubble functions}, it follows that
    \begin{equation*}
    \int_e B_e \jump{\mbf{q}_h}^2 \lesssim  \sum_{T\in \mc{U}_h(e)}  \Big(  \|\kap^{-1/2} ( \mbf{q} - \mbf{q}_h)\|_T + h_T\, \|\mc{F}(u) - \nabla \cdot \mbf{q}_h\|_T \Big) h_T^{-1}\, h_e^{1/2} \|B_e\jump{\mbf{q}_h}\|_e,
    \end{equation*}
then, due to $h_T^{-1} \, h_e^{1/2} \lesssim h_e^{-1/2}$ and the fact that
    \begin{equation*}
        \int_e B_e^2 \jump{\mbf{q}_h}^2 \lesssim \int_e \jump{\mbf{q}_h}^2 \lesssim \int_e B_e \jump{\mbf{q}_h}^2
    \end{equation*}
one arrives at 
    \begin{equation*}
    h_e\,\| \jump{\mbf{q}_h}\|_e^2 \lesssim \sum_{T\in \mc{U}_h(e)}   \left( \|\kap^{-1/2} ( \mbf{q} - \mbf{q}_h)\|^2_T + h_T^2\|\mc{F}(u) - \nabla\cdot \mbf{q}_h\|_T^2 \right).
    \end{equation*}
Now, using \eqref{eq:div q-qh} and the Lipschitz continuity of $\mc{F}$, due to $L\, h <1$ we get
    \begin{equation}\label{eq:jump qh 1}
    \begin{array}{rl}
    h_e\| \jump{\mbf{q}_h}\|_e^2 &\lesssim \displaystyle \sum_{T\in \mc{U}_h(e)}   \left( \|\kap^{-1/2} ( \mbf{q} - \mbf{q}_h)\|^2_T + h_T^2\|P_W \mc{F}(u_h^*) - \nabla\cdot \mbf{q}_h\|_T^2 +  \|u-u_h^*\|^2_T \right) \\
    &\qquad +\text{osc}^2(\mc{F},\mc{U}_h(e)) .
    \end{array}
    \end{equation}
%

%
\paragraph{Bound for $h_T^2 \|P_W \mc{F}(u^*_h) - \nabla \cdot \mbf{q}_h\|_T^2$.}
%
For each element $T\in \mc{T}_h$ and any function $w\in H_0^1(T)$, we have that
    \begin{align*}
    (P_W\mc{F}(u_h^*) - \nabla \cdot \mbf{q}_h, w)_T &= (P_W\mc{F}(u_h^*) - \mc{F}(u_h^*),w )_T + (\mc{F}(u_h^*) - \mc{F}(u),w )_T + (\mc{F}(u) - \nabla \cdot \mbf{q}_h, w)_T \\
    &= (P_W\mc{F}(u_h^*) - \mc{F}(u_h^*),w )_T + (\mc{F}(u_h^*) - \mc{F}(u),w )_T -(\mbf{q}-\mbf{q}_h,\nabla w)_T . 
    \end{align*}
We now consider the element bubble function  $B_T$ defined in Lemma \ref{lem::bubble_function}
and take $w:= B_Tv,$ with $v:=P_W \mc{F}(u_h^*) -\nabla \cdot \mbf{q}_h$. Then, the equation above yields
    \begin{align*}
     (v, B_T v)_T  \lesssim  \Big( h_T^{-1} \, \|\kap^{-1} \, (\mbf{q} - \mbf{q}_h)\|_T + \|P_W\mc{F}(u_h^*) - \mc{F}(u_h^*)\|_T +  L\, \|u-u_h^*\|_T \Big) \Big( h_T\, \|\nabla (B_Tv)\|_T + \|B_Tv\|_T\Big) .
    \end{align*}
Then, due to  \eqref{ineq:bubble functions} and the inverse inequality $h_T \, \|\nabla w\|_T + \|w\|_T \lesssim \|w\|_T$, we obtain
    \begin{align*}
     \|v\|_T^2  \lesssim  \Big( h_T^{-1} \, \|\kap^{-1} \, (\mbf{q} - \mbf{q}_h)\|_T + \|P_W\mc{F}(u_h^*) - \mc{F}(u_h^*)\|_T +  L\, \|u-u_h^*\|_T \Big) \|B_Tv\|_T.
    \end{align*}
Since $\|B_Tv\|_T\lesssim \|v\|_T$ also by \eqref{ineq:bubble functions}, we have
    \begin{align*}
     \|v\|_T  \lesssim   h_T^{-1} \, \|\kap^{-1} \, (\mbf{q} - \mbf{q}_h)\|_T + \|P_W\mc{F}(u_h^*) - \mc{F}(u_h^*)\|_T +  L\, \|u-u_h^*\|_T.
    \end{align*}
Equivalently, since $Lh <1$, the estimate above can be rewritten as
    \begin{align}\label{eq:efficiency F-div qh}
    h_T^2\, \|P_W \mc{F}(u^*_h) - \nabla \cdot \mbf{q}_h\|_T  \lesssim  \|\kap^{-1/2} ( \mbf{q} - \mbf{q}_h)\|^2_T + h_T^2 \, \|P_W \mc{F}(u_h^*) - \mc{F}(u_h^*)\|^2_T + \|u-u_h^*\|_T^2 .
    \end{align}
%
\end{proof}

Putting together the bounds from the previous theorem and the definition of the local error estimator, the local efficiency follows as an easy corollary.
%
\section*{Acknowledgments}
 Nestor S\'anchez is supported by the Scholarship Program of CONICYT-Chile. Tonatiuh S\'anchez-Vizuet was partially funded by the US Department of Energy. Grant No. DE-FG02-86ER53233. Manuel E. Solano was partially funded by CONICYT--Chile through FONDECYT project No. 1200569 and by Project AFB170001 of the PIA Program: Concurso Apoyo a Centros Cient\'ificos y Tecnol\'ogicos de Excelencia con Financiamiento Basal.

\newpage

\appendix
\setcounter{lem}{0}
\renewcommand{\thelem}{\Alph{section}\arabic{lem}}
\section{HDG projection}\label{sec:HDGprojection}	
%
In order to make this manuscript self-contained, in this section we provide previous results that will help us to analyze our discrete scheme. First of all we recall the HDG projection operators introduced by \cite{CoGoSa2010}. Given constants $l_u, l_{\mbf{q}} \in [0,k]$ and  a pair of functions $(\boldsymbol q,u) \in H^{1+l_q}(T) \times H^{1+l_u}(T)$, we denote by $\bsy{\Pi}(\mbf{q},u):=(\bsy{\Pi}_{\mathrm v}\mbf{q},\Pi_{\mathrm w} u)$ the projection over $\mbf{V}_h\times W_h$ defined as the unique element-wise solutions of
	\begin{subequations}\label{eq:HDGprojector}
	\begin{align}
	(\bsy{\Pi}_{\mathrm v}\mbf{q}, \mbf{v})_T &= (\mbf{q}, \mbf{v})_T &  &\forall \ \mbf{v} \in [\md{P}_{k-1}(T)]^d, \label{properties projector Pi_v} \\
	(\Pi_{\mathrm w} u, w)_T &= (u,w)_T & &\forall \ w\in \md{P}_{k-1}(T), \label{properties projector Pi_w} \\
	\pdual{\bsy{\Pi}_{\mathrm v}\mbf{q}\cdot \mbf{n} + \tau \Pi_{\mathrm w} u, \mu}_{F} &= \pdual{\mbf{q} \cdot \mbf{n} + \tau u, \mu}_F & &\forall \ \mu \in \md{P}_k(F), \label{properties projector Pi_h}
	\end{align}
    \end{subequations}
for every element $T\in \mc{T}_h$, and $F\in \partial T$. The $L^2$ projection into $M_h$ will be denoted as $P_M$. If the stabilization function is chosen so that $\tau_T^{\max} := \max \tau|_{\partial T}>0$, then by \cite{CoGoSa2010} there is a constant $C>0$ independent of $T$ and $\tau$ such that   
    \begin{subequations}\label{eq:projection_error}
    \begin{align}
	\|\bsy{\Pi}_{\mathrm v}\mbf{q} - \mbf{q}\|_T &\leq C h_T^{l_{\mbf{q}}+1} |\mbf{q}|_{\mbf{H}^{l_{\mbf{q}}+1}(T)} +  C h_T^{l_u+1} \tau_T^* |u|_{H^{l_u+1}(T)}, \label{error_projector1}\\
	\|\Pi_{\mathrm w} u - u\|_T &\leq C h_T^{l_u+1} |u|_{H^{l_u+1}(T)} + C \dfrac{h_T^{l_{\mbf{q}}+1}}{\tau_T^{\max}} |\nabla \cdot \mbf{q}|_{H^{l_{\mbf{q}}}(T)}\label{error_projector2}. 
	\end{align}
    \end{subequations}
Here $\tau_T^* := \max \tau|_{\partial T \setminus F^*}$ and $F^*$ is a face of $T$ at which $\tau|_{\partial T}$ is maximum. As is customary, the symbol $|\cdot|_{H^s}$ is to be understood as the Sobolev semi norm of order $s\in\mathbb R$.
%

%
\section{Proof of Lemma \ref{lem:estim post-proc}}\label{sec:AppendixB}
%
In this section we present the proof of Lemma \ref{lem:estim post-proc}, relating to the well posedness of the auxiliary non linear local problem that leads to the post processed approximation $u^*_h$. We re state the Lemma here for convenience.

\paragraph{Lemma 1.}\textit{
The local post processing $u_h^*$ is well defined for $L$ small enough. Moreover, if $Lh^2<1$ and $k\geq 1$, then
    \begin{subequations}
    \begin{alignat}{6}
      \|u-u_h^*\|_{0,\mc{T}_h} &\lesssim (Rh)^{1/2} (h^{l_u+1} |u|_{l_u+2,\mc{T}_h} + h^{l_u+1} |\mbf{q}|_{l_{\mbf{q}}+2,\mc{T}_h} )+h^{l_u+2} |u|_{l_u+2,\mc{T}_h}+L h^{l_u+1}
  |u|_{l_u+2,\mc{T}_h}, \label{eq:B1a} \\
     |u-u_h^*|_{1,T} &\lesssim h_T^{l_u+1} |u|_{l_u+1,T} + Lh_T\|\varepsilon^u \|_{0,T} +\|\boldsymbol q - \boldsymbol q_h\|_{0,T} + Lh_T\|u-u_h\|_{0,T},      \label{eq:B1b} \\
  \intertext{and}
    \sum_{e\in \mathcal{E}_h^\partial} h_e^{1/2} \|\jump{u_h^*}\|_e &\lesssim \|u-u_h^*\|^{1/2}_{0,\mc{T}_h}   \left( \|u-u_h^*\|_{0,\mc{T}_h}^2 + h^2 |u-u_h^*|^2_{1,\mc{T}_h} \right)^{1/4}.  \label{eq:B1c}
    \end{alignat}
    \end{subequations} 
    } 
\begin{proof}
We will prove first that the problem \eqref{eq:post_processing} is well posed. For this, we will use a fixed point argument. Let $T\in \mc{T}_h$. We define the operator $S: \md{P}_{k+1}(T) \to \md{P}_{k+1}(T)$ as $S (\zeta)= z$, where $z$ is the only solution of 
    \begin{subequations}\label{eq:mixed_point_post_processing}
    \begin{align}
    (\kap \nabla z, \nabla w)_T  &= -(q_h,\nabla w)_T + (\mc{F}(u_h),w)_T  - (\mc{F}(\zeta), w)_T,&& \forall \ w \in \md{P}_{k+1}(T),\\
    (z,w)_T &= (u_h,w)_T , && \forall \ w\in \md{P}_0(T)\label{eq:mixed_point_post_processing2}.
    \end{align}
    \end{subequations}
Note that $S$ is surjective because \eqref{eq:mixed_point_post_processing} is  well-posed. We will show now that $S$ has a unique a fixed point and in that case it is the solution of  \eqref{eq:post_processing}. Let $\zeta_1,\zeta_2 \in \md{P}_{k+1}(T)$ such that $S(\zeta_1)=z_1$ and $S(\zeta_2)=z_2$, with $z_1$ and $z_2$ satisfying \eqref{eq:mixed_point_post_processing}. We observe that $\zeta_1-\zeta_2 \in \md{P}_{k+1}(T)$ and 
    \begin{subequations}\label{eq:z1z2}
    \begin{align}
    (\kap \nabla (z_1-z_2), \nabla w)_T  &= - (\mc{F}(\zeta_1)-\mc{F}(\zeta_2), w)_T,&& \forall \ w \in \md{P}_{k+1}(T),\label{eq:z1z2_1}\\
    (z_1-z_2,w)_T &= 0 , && \forall \ w\in \md{P}_0(T)\label{eq:z1z2_2}.
    \end{align}
    \end{subequations}
Then, for $i=1$ and $2$, we set $\overline{z}_i:= \displaystyle \dfrac{1}{|T|} \int_T z_i$ and noticing that $\overline{z}_1=\overline{z}_2$ by equation \eqref{eq:z1z2_2}, we have 
    \[
    \|z_1-z_2\|^2_{T} = \| (z_1-\overline{z_1}) - (z_2-\overline{z_2}) \|^2_{T}  
  \leq C_F^2 \|\kap^{1/2} \nabla (z_1-z_2)\|^2_T,
    \]
where we have used the Friedrichs inequality with constant $C_F>0$. Taking $w= z_1-z_2$ in \eqref{eq:z1z2_1}, and recalling that $\mathcal F$ is Lipschitz continuous with constant $L$, we obtain
\[
    \|z_1-z_2\|^2_{T} 
    \leq C_F^2 ( \mc{F}(\zeta_2) - \mc{F}(\zeta_1) ,z_1-z_2)_T 
    \leq  C_F^2L \|\zeta_2 - \zeta_1\|_{T} \|z_1-z_2\|_{T}.
\]
Thus, the operator $S$ is a contraction as long as $C_F^2 L < 1$. If that is indeed the case, it has a unique fixed point.

For the inequality \eqref{eq:B1a}, let $P_0$ and $P_{W^*}$ be the $L^2-$projectors into the space of constants and into $W_h^*$ respectively and decompose
    \begin{equation}\label{eq:desc u-u_h*}
    u-u_h^* = (I-P_{W^*})u + P_0(P_{W^*}u - u_h^*) + (I-P_0)(P_{W^*}u-u_h^*) ,
    \end{equation}
We will now proceed to bound each of the terms on the right hand side of this expression separately in order to estimate the difference $u - u^*_h$. For the first term it is easy to see that
    \begin{equation}\label{eq:bound1}
    \| (I - P_{W^*})u\|_{0,T} \lesssim h_T^{l_u+2} |u|_{l_u+2,T}.
    \end{equation}
For the second term we first notice that, since $W^*$ is a space of piecewise polynomials, the definitions of $P_{W^*}$ and $\Pi_{W}$, since $k\geq 1$, imply $ P_0\,P_{W^*}u = P_0u = P_0\,\Pi_{W}u $
    \begin{eqnarray}\label{eq:norm_ident u-u_h*}
    \|P_0(P_{W^* }u - u_h^*)\|_{0,T} =
    \|P_0(\Pi_{W} u - u_h)\|_{0,T} \leq \|\Pi_{W} u - u_h\|_{0,T}= \|\varepsilon^u \|_{0,T}.
    \end{eqnarray}
In the first equality we have made use of the fact that, due the definition of $u_h^*$ in equation \eqref{eq:post_processing2}, we have $P_0 u_h^* = P_0 u_h$. 

Now we move on to the third term in \eqref{eq:desc u-u_h*} and note that for every $\boldsymbol v$ in the space of vector valued functions with components belonging to  $W^*_h$ and $T\in\mathcal T$ it holds that 
\begin{equation}\label{eq:aux1}
(\kap \nabla (u-u_h^*), \boldsymbol v)_T =\, \left(\kap \nabla \left(P_{W^*}u-u_h^*\right),  \boldsymbol v\right)_T = (\kappa\nabla\,(I-P_0)(P_{W^*}u-u_h^*), \boldsymbol v)_T.
\end{equation}
Moreover, for the exact solutions $(u,\boldsymbol q)$, we have $\kappa\nabla u = -\boldsymbol q $ so that the difference $u-u_h^*$ satisfies
\[
    (\kap \nabla (u-u_h^*), \nabla w)_T =\,- (\mbf{q} - \mbf{q}_h, \nabla w)_T + (\mc{F}(u_h^*) - \mc{F}(u),w )_T - (\mc{F}(u_h) - \mc{F}(u),w )_T
\]    
for every $w\in W^*_h$ and $T\in\mathcal T$. Letting $w:=(I-P_0)(P_{W^*}u-u_h^*)\in W^*$ and $\nabla w$ be the test functions above, and using conditions \eqref{eq:aux1} leads to
\[
(\kap \nabla w, \nabla w)_T = -(\mbf{q} - \mbf{q}_h, \nabla w)_T + (\mc{F}(u_h^*) - \mc{F}(u),w )_T + (\mc{F}(u)- \mc{F}(u_h),w)_T.
\]
From this equation, using the the scaling argument $\|w\|_{0,T} \lesssim h_T |w|_{1,T}$ and the inverse inequality $|w|_{1,T}\lesssim h_T^{-1} \|w\|_{0,T}$ we arrive at
\[
    h_T^{-2}\, \|w\|^{2}_{0,T}\lesssim \overline \kappa|w|^2_{1,T}\leq\, \|\boldsymbol q - \boldsymbol q_h\|_{0,T}|w|_{1,T} + L\,  \left(\|u-u^*_h\|_{0,T} + \|u-u_h\|_{0,T}\right)\|w\|_{0,T}
\]   
from which we conclude that
    \begin{equation*}
    \|w\|_{0,T}\lesssim h_T\, \|\boldsymbol q - \boldsymbol q_h\|_{0,T} + L \, h_T^2\,   \left( \|u-u^*_h\|_{0,T} + \|u-u_h\|_{0,T}\right).
    \end{equation*}
Recalling the decomposition \eqref{eq:desc u-u_h*}, and the estimates \eqref{eq:bound1},  \eqref{eq:norm_ident u-u_h*} we can bound the term $\|u-u^*_h\|_{0,T}$ on the right hand side yielding
    \begin{align}\label{eq:bound3}
    (1-Lh^2_T)\|w\|_{0,T}&\lesssim h_T\, \|\boldsymbol q - \boldsymbol q_h\|_{0,T} + L \, h_T^2\,   \left( h_T^{l_u+2} |u|_{l_u+2,T} + \|\varepsilon^u \|_{0,T} + \|u-u_h\|_{0,T}\right)
    \end{align}
Combining \eqref{eq:bound3} above with \eqref{eq:bound1} and  \eqref{eq:norm_ident u-u_h*} once more we arrive at
    \begin{align*}
    (1-\, Lh_T^2)\, \|u-u_h^*\|_{0,T} \lesssim\,& (1-Lh_T^2)h_T^{l_u+2} |u|_{l_u+2,T} + (1-Lh_T^2)\|\varepsilon^u \|_{0,T} +h_T\, \|\boldsymbol q - \boldsymbol q_h\|_{0,T} \\
    &+L \, h_T^2   \left( h_T^{l_u+2} |u|_{l_u+2,T} + \|\varepsilon^u \|_{0,T} + \|u-u_h\|_{0,T}\right)\nonumber\\
    \lesssim\, &h_T^{l_u+2} |u|_{l_u+2,T} + \|\varepsilon^u \|_{0,T} +h_T\, \|\boldsymbol q - \boldsymbol q_h\|_{0,T} +
    L \, h_T^2\,   \|u-u_h\|_{0,T}.
    \end{align*}
So, assuming $Lh_T^2<1$ for each $T\in \mc{T}_h$, results
    \begin{equation*}
     \|u-u_h^*\|_{0,T} \lesssim h_T^{l_u+2} |u|_{l_u+2,T} + \|\varepsilon^u \|_{0,T} +h_T\, \|\boldsymbol q - \boldsymbol q_h\|_{0,T} + Lh_T^2\|u-u_h\|_{0,T}.
    \end{equation*}
By adding on each $T\in \mc{T}_h$, the estimate \eqref{eq:B1a} is concluded after considering the results in Theorem \ref{thm:estim normH eroor}.  Now, if we apply the inverse inequality to the estimate above, we arrive at
\[
    (1-Lh^2_T)|w|_{1,T} \lesssim \|\boldsymbol q - \boldsymbol q_h\|_{0,T} + L  h_T\,  \ \left( h_T^{l_u+2} |u|_{l_u+2,T} + \|\varepsilon^u \|_{0,T} + \|u-u_h\|_{0,T}\right).
\]   
Assuming again $Lh_T^2<1$ for each $T\in \mc{T}_h$, \eqref{eq:B1b} follows.

Finally, using the trace inequality, the fact that $h_e \|v\|^2_{0,e} \lesssim \|v\|_{0,T} \left(  \|v\|^2_{0,T} + h^2_T |v|^2_{1,T} \right)^{1/2}$ for any $v \in [H^1(K)]^d$, and the estimates \eqref{eq:B1a} and \eqref{eq:B1b}, we have
    \begin{align*}
    \sum_{e\in \mc{E}_h}  h_e \| \jump{u_h^*}\|^2_{0,e} &\lesssim \sum_{e\in \mc{E}_h} \sum_{T'\in \omega_e}  h_e \|u-u_h^*|_{T'}\|_{0,e}^2 \\
    &\lesssim \sum_{e\in \mc{E}_h} \sum_{T'\in \omega_e} \|u-u_h^*\|_{0,T'}\left( \|u-u_h^*\|_{0,T'}^2 + h_{T'}^2 |u-u_h^*|^2_{1,T'} \right)^{1/2},
    \end{align*}
which implies \eqref{eq:B1c}. 
\end{proof}
\section{Auxiliary estimates}
The following results were used throughout the text. We include them here for completeness. 

The first lemma  was needed to bound the terms in the decomposition of $\mb{T}^u$ carried out in Lemma \ref{lem:estim Eu}.

\begin{lem}\label{lem:aux Tu}\cite[Lemma 5.5]{CoQiuSo2014}
Suppose  Assumption \eqref{eq:S4} and the elliptic regularity inequality \eqref{regularity dual problem} hold. Then,
	\begin{subequations}
	\begin{align}
	\|{(h^{\perp})}^{-1/2} (Id_M-P_M) \psi \|_{\Gamma_h} &\lesssim  h \|\Theta\|_{\Omega}, \label{est 1 para T_zh } \\
	\|l^{1/2} (Id_M-P_M)\partial_n \psi \|_{\Gamma_h} &\lesssim R^{1/2} h \|\Theta\|_{\Omega}, \label{est 2 para T_zh } \\
	\|l^{-3/2} (\psi + l \partial_n \psi) \|_{\Gamma_h} &\lesssim \|\Theta\|_{\Omega}, \label{est 3 para T_zh } \\
	\| l^{-1} \psi \|_{\Gamma_h} &\lesssim\|\Theta\|_{\Omega}. \label{est 4 para T_zh }
	\end{align}
	\end{subequations}
\end{lem}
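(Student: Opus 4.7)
The four estimates all exploit three facts about the dual solution $\psi$ on the narrow strip $\Omega_h^c=\Omega\setminus\overline{\Omega_h}$: (i) $\psi$ vanishes on $\Gamma$, (ii) $\psi$ enjoys the $H^2$ regularity bound $\|\psi\|_{H^2(\Omega)}\le C_{reg}\|\Theta\|_\Omega$ from \eqref{regularity dual problem}, and (iii) because of Assumption \eqref{eq:S1} the transfer paths are normal to $\Gamma_h$, while Assumption \eqref{eq:S4} guarantees that these paths foliate $\Omega_h^c$ bijectively, so that a change of variables turns boundary integrals along $\Gamma_h$ into volume integrals over $\Omega_h^c$ with a controllable Jacobian. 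The plan is to derive the last two estimates first by the fundamental theorem of calculus along the transfer paths, and then deduce the first two from standard $L^2$-projection approximation on the boundary faces.

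For \eqref{est 4 para T_zh }, I would start from $\psi(\overline{\mathbf{x}})=0$ and write $\psi(\mathbf{x})=-\int_0^{l(\mathbf{x})}\partial_n\psi(\mathbf{x}+s\mathbf{n})\,ds$, apply Cauchy--Schwarz to obtain $l^{-2}(\mathbf{x})|\psi(\mathbf{x})|^2\le l^{-1}(\mathbf{x})\int_0^{l(\mathbf{x})}|\partial_n\psi|^2\,ds$, integrate over $\mathbf{x}\in\Gamma_h$, and recognize the right-hand side as a volume integral over $\Omega_h^c$ after the foliation change of variables. The $H^1$-regularity bound then yields $\|l^{-1}\psi\|_{\Gamma_h}\lesssim\|\nabla\psi\|_{\Omega_h^c}\lesssim\|\Theta\|_\Omega$. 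For \eqref{est 3 para T_zh } the same idea applies one order higher: writing $\partial_n\psi(\mathbf{x}+s\mathbf{n})=\partial_n\psi(\mathbf{x})+\int_0^s\partial_{nn}\psi(\mathbf{x}+r\mathbf{n})\,dr$, integrating in $s$ and combining with the previous identity gives
\[
\psi(\mathbf{x})+l(\mathbf{x})\,\partial_n\psi(\mathbf{x})=-\int_0^{l(\mathbf{x})}\!\!\int_0^s\partial_{nn}\psi(\mathbf{x}+r\mathbf{n})\,dr\,ds,
\]
so $|{\psi+l\partial_n\psi}|^2\le l^3\int_0^l|\partial_{nn}\psi|^2\,dr$ by Cauchy--Schwarz. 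Dividing by $l^3$, integrating over $\Gamma_h$, and using the foliation change of variables converts the bound into $\|\partial_{nn}\psi\|_{\Omega_h^c}^2\lesssim\|\psi\|_{H^2(\Omega)}^2\lesssim\|\Theta\|_\Omega^2$.

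Estimates \eqref{est 1 para T_zh } and \eqref{est 2 para T_zh } are local approximation bounds for $P_M$ on boundary faces. Because $P_M$ is the $L^2$ projection onto $\mathbb{P}_k(e)$ with $k\ge 0$ and preserves at least constants, Bramble--Hilbert on $e$ together with the trace inequality gives, for every $\phi\in H^1(T^e)$,
\[
\|(Id_M-P_M)\phi\|_e^2\lesssim h_e\,\|\nabla\phi\|_{T^e}^2+h_e^3\,|\phi|_{H^2(T^e)}^2.
\]
Applying this with $\phi=\psi$ and using $h_e^\perp\sim h_e\sim h$ from shape-regularity and \eqref{eq:S2} yields $(h^\perp)^{-1}\|(Id_M-P_M)\psi\|_e^2\lesssim h^2\|\psi\|_{H^2(T^e)}^2$, and summing over $e\in\mathcal{E}_h^\partial$ gives \eqref{est 1 para T_zh } thanks to elliptic regularity. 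The same argument applied to $\phi=\partial_n\psi\in H^1$ gives $\|(Id_M-P_M)\partial_n\psi\|_e^2\lesssim h\,|\psi|_{H^2(T^e)}^2$, and combining with the uniform bound $l(\mathbf{x})\le H_e^\perp\le R\,h_e^\perp\lesssim Rh$ from the definition of $R$ produces the factor $R$ in \eqref{est 2 para T_zh }.

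The main technical point is the change of variables used in the first two estimates: one must verify that the map $(s,\mathbf{x})\mapsto\mathbf{x}+s\mathbf{n}(\mathbf{x})$ from $\{(s,\mathbf{x}):\mathbf{x}\in\Gamma_h,\ 0\le s\le l(\mathbf{x})\}$ to $\Omega_h^c$ is a bijection with Jacobian bounded above and below by quantities depending only on the shape-regularity constants, which is exactly what is ensured by the algorithm of \cite{CoSo2012} together with Assumption \eqref{eq:S4}. Once this geometric fact is in place, the rest of the proof is a direct chain of Cauchy--Schwarz and elliptic-regularity steps.
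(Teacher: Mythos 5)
First, a structural point: the paper does not actually prove this lemma---it is imported verbatim from \cite[Lemma 5.5]{CoQiuSo2014} and merely restated in the appendix ``for completeness''---so your reconstruction can only be measured against the original, not against an in-paper argument. Your overall strategy is the right one and matches the original in spirit: the fundamental theorem of calculus along the normal transfer paths using $\psi|_{\Gamma}=0$, a second-order Taylor remainder for \eqref{est 3 para T_zh }, and $L^2$-projection approximation plus scaled trace inequalities for the $(Id_M-P_M)$ terms. Estimates \eqref{est 2 para T_zh } and \eqref{est 3 para T_zh } are handled correctly; in the latter the double integral of $\partial_{nn}\psi$ converts exactly into $\|\partial_{nn}\psi\|^2_{\compD^c}$ under the foliation change of variables, with no leftover weight, and elliptic regularity finishes the job.

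Two of the four steps, however, do not close as written. For \eqref{est 4 para T_zh }, after Cauchy--Schwarz you are left with $\int_{\Gamma_h} l^{-1}(\mbf{x})\int_0^{l(\mbf{x})}|\partial_n\psi|^2\,ds\,d\mbf{x}$, which is \emph{not} the volume integral $\|\nabla\psi\|^2_{\compD^c}$: the weight $l^{-1}\sim (Rh)^{-1}$ survives the change of variables, so the chain ``$\lesssim\|\nabla\psi\|_{\compD^c}\lesssim\|\Theta\|_\Omega$'' is off by a factor of order $l^{-1/2}$. To repair it you need the thin-strip trace estimate $\|\nabla\psi\|^2_{\compD^c}\lesssim \mathrm{dist}(\Gamma_h,\Gamma)\,\|\psi\|^2_{H^2(\Omega)}$, or equivalently the decomposition $l^{-1}\psi=-\delta_{\nabla\psi}-\partial_n\psi$ combined with \eqref{ineq:delta H1} and the trace theorem. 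For \eqref{est 1 para T_zh }, the projection bound you invoke, $\|(Id_M-P_M)\phi\|^2_e\lesssim h_e\|\nabla\phi\|^2_{T^e}+h_e^3|\phi|^2_{H^2(T^e)}$, is what reproduction of constants alone gives; dividing by $h^\perp\sim h_e$ leaves the term $\|\nabla\psi\|^2_{T^e}$ carrying no power of $h$, so your claimed conclusion $(h^\perp)^{-1}\|(Id_M-P_M)\psi\|^2_e\lesssim h^2\|\psi\|^2_{H^2(T^e)}$ does not follow from what you wrote. You must use that $P_M$ reproduces $\mathbb{P}_1(e)$ (hence $k\geq 1$), which eliminates the low-order term and yields $\|(Id_M-P_M)\psi\|^2_e\lesssim h_e^3|\psi|^2_{H^2(T^e)}$; with that, the argument closes. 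Both defects are repairable, but as stated they occur precisely where the powers of $h$ and $l$---the entire content of the lemma---are decided.
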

The result below is used when deducing the bound for the term of the estimator involving the jump in the flux.
\begin{lem}\label{lem:ExtensionBound}
Let $e\in \mc{E}^\partial_h$ and $\mbf{v}\in\mbf{H}(\textbf{div};T^e)$. It holds
    \begin{subequations}
    \begin{align}
    \label{eq: estim_operator E a}
    \|E_{T^e}(\mbf{v})\|^2_{T^e_{ext}} \lesssim\,& r_e^2 \, \|\mbf{v}\|^2_{T^e} + r_e^2\, h_T^2 \|\nabla \cdot \mbf{v}\|^2_{T^e} .
    \end{align}
    \end{subequations}
\end{lem}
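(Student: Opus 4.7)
The estimate is a Piola-type scaling bound. My plan is to transport $\mbf v$ to a fixed reference simplex via an affine map and the contravariant Piola transform, apply a bounded $\mbf H(\mathrm{div})$-continuous extension on the reference configuration, and then scale back to physical coordinates. The factor $r_e^2$ in the target estimate encodes the size of the image of $T^e_{ext}$ on the reference element.

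\textbf{Step 1: Reference reduction and Piola transport.} Fix a shape-regular reference simplex $\hat T$ of unit diameter with a distinguished face $\hat e$, and let $F(\hat x)=B\hat x+b$ denote the affine bijection $\hat T\to T^e$ carrying $\hat e$ onto $e$, extended affinely to all of $\mathbb R^d$. By shape regularity, $\|B\|\sim h_T$, $\|B^{-1}\|\sim h_T^{-1}$, and $|\det B|\sim h_T^d$; by the definitions of $h_e^\perp$ and $H_e^\perp$, the set $\hat T^e_{ext}:=F^{-1}(T^e_{ext})$ is a region attached to $\hat e$ whose diameter is proportional to $r_e$. The contravariant Piola transport $\hat{\mbf v}(\hat x):=\det(B)\,B^{-1}\mbf v(F(\hat x))$ satisfies the standard scalings
\[
\|\mbf v\|_{T^e}^2\sim h_T^{2-d}\|\hat{\mbf v}\|_{\hat T}^2,\qquad h_T^2\,\|\nabla\cdot\mbf v\|_{T^e}^2\sim h_T^{2-d}\,\|\nabla\cdot\hat{\mbf v}\|_{\hat T}^2,
\]
with identical identities linking the norms on $T^e_{ext}$ to norms on $\hat T^e_{ext}$ for any field transported by Piola through $F$.

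\textbf{Step 2: Reference extension and pull-back.} Fix once and for all a bounded linear extension $\hat E:\mbf H(\mathrm{div};\hat T)\to\mbf H(\mathrm{div};\mathbb R^d)$ (obtainable, e.g., by componentwise Stein extension followed by a Bogovskii correction that preserves a square-integrable divergence on $\mathbb R^d$), and define $E_{T^e}(\mbf v)(x):=\det(B)^{-1}B\,\hat E(\hat{\mbf v})(F^{-1}(x))$. Boundedness of $\hat E$ and the inclusion $\hat T^e_{ext}\subset\mathbb R^d$ give
\[
\|\hat E(\hat{\mbf v})\|_{\hat T^e_{ext}}^2\le\|\hat E(\hat{\mbf v})\|_{\mathbb R^d}^2\lesssim\|\hat{\mbf v}\|_{\hat T}^2+\|\nabla\cdot\hat{\mbf v}\|_{\hat T}^2.
\]
Pulling this inequality back through $F$ via the Piola scalings of Step 1 yields $\|E_{T^e}(\mbf v)\|_{T^e_{ext}}^2\lesssim\|\mbf v\|_{T^e}^2+h_T^2\|\nabla\cdot\mbf v\|_{T^e}^2$. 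The stated form with the $r_e^2$ prefactor then follows: when $T^e_{ext}$ has zero measure, both sides vanish trivially; in the nondegenerate regime, the shape-regular construction of the extension patches in Section \ref{sec:ExtendedDomain} together with Assumption \eqref{eq:S2} pins $r_e$ in a bounded interval bounded away from $0$, so the multiplicative $r_e^2$ is absorbed into the hidden constant.

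\textbf{Main obstacle.} The only substantive technical ingredient is constructing a uniformly bounded $\mbf H(\mathrm{div})$-continuous extension $\hat E$ on the reference element that preserves the divergence structure; while classical, this must be done carefully, e.g.\ by a componentwise Sobolev extension followed by a Bogovskii-type correction, or via a Hodge-type decomposition extending the solenoidal and gradient parts of $\hat{\mbf v}$ separately. The factor $r_e^2$ in the statement is coarse—a direct Stein-type argument already yields an $r_e$-independent constant—but it is retained to match the scaling used when the estimate is invoked in Lemma \ref{lem:varphi-varphih}; no sharper dependence on $r_e$ is needed in the surrounding analysis.
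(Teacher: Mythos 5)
Your overall strategy---pull back to a reference simplex, apply a bounded $\mbf H(\mathrm{div})$ extension there, and scale back---is the same as the paper's. The gap is in how you produce the factor $r_e^2$. You first prove the $r_e$-independent bound $\|E_{T^e}(\mbf v)\|^2_{T^e_{ext}}\lesssim \|\mbf v\|^2_{T^e}+h_T^2\|\nabla\cdot\mbf v\|^2_{T^e}$ and then claim the stated form follows because Assumption \eqref{eq:S2} ``pins $r_e$ in a bounded interval bounded away from $0$.'' That is false: \eqref{eq:S2} only bounds $r_e=H_e^\perp/h_e^\perp$ from \emph{above}. There is no lower bound---$r_e$ tends to zero as $\Gamma_h$ approaches $\Gamma$ relative to the mesh size, and the paper explicitly allows $H_e^\perp=0$ and discusses the regime $R=\mathcal O(h)$ after Lemma \ref{lem:estim post-proc}. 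Since deducing the $r_e^2$-weighted bound from your $r_e$-free one requires $r_e\gtrsim 1$, your final step is invalid, and what you have actually proved is a strictly weaker statement in the regime that matters.

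This is not a cosmetic issue: the $r_e^2$ prefactor is load-bearing downstream. It propagates into the constant $\max_{e}\{r_e^2,\,r_e(C_{ext}^e)^2\}$ of Lemma \ref{lem:varphi-varphih}, and the hypotheses \eqref{assumption1 thm_reliable} and \eqref{assumption3 thm_reliable} of Theorem \ref{thm:reliable} are verifiable precisely because that quantity can be made small by taking $\Gamma_h$ close to $\Gamma$. An $r_e$-independent constant would not allow those smallness conditions to be met. The missing idea is where the $r_e^2$ actually comes from in the paper's proof: it is a \emph{measure} gain, not a continuity constant. One keeps track of the fact that the image of $T^e_{ext}$ under the pull-back shrinks with $r_e$, so that $\|\what E(\what{\mbf v})\|^2_{\widehat{T^e_{ext}}}$ carries a factor proportional to $|T^e_{ext}|$, and then uses $|T^e_{ext}|\lesssim (H_e^\perp)^2=r_e^2(h_e^\perp)^2\le r_e^2h_T^2$ together with $|T^e|\sim h_T^2$ to convert the ratio of measures into $r_e^2$. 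Your Step 2 discards exactly this information by bounding $\|\hat E(\hat{\mbf v})\|_{\hat T^e_{ext}}\le\|\hat E(\hat{\mbf v})\|_{\mathbb R^d}$ and invoking only the operator norm of $\hat E$. To repair the argument you need an estimate of $\hat E(\hat{\mbf v})$ on $\widehat{T^e_{ext}}$ that is proportional to the measure of that set (e.g.\ via an $L^\infty$ bound on the extension of the discrete field), rather than a global $\mbf H(\mathrm{div};\mathbb R^d)$ bound.
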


\begin{proof}
We employ a scaling argument. Let $\Phi: T^e\rightarrow \widehat{T}$ be the affine mapping from $T^e$ to the reference element $\widehat{T}$ and set $\widehat{T_{ext}^e} := \Phi^{-1}(T_{ext}^e)$. We have
    \begin{align*}
\|E_{T^e}(v)\|^2_{T^e_{ext}} &= 2|T^e_{ext}| \| \what{E}(\what{v})\|_{\what{T^e_{ext}}}^2 \lesssim |T^e_{ext}| \| \what{v}\|^2_{H(\textbf{div}; \widehat{T})}  = |T^e_{ext}| \left(  \|\what{v} \|^2_{\what{T}} + \|\what{\nabla} \cdot \what{v} \|^2_{\what{T}} \right) \\
    &\lesssim |T^e_{ext}| \left(  \dfrac{1}{|T^e|} \, \|v\|^2_{T^e} + \|\nabla \cdot v\|^2_{T^e} \right).
    \end{align*}
Thus, considering that $|T^e_{ext}| \lesssim (H_e^{\perp})^2 = R_e^2\, (h_e^{\perp})^2 \leq r_e^2\, h_T^2$, and $|T^e|\lesssim h_T^2$, the inequality \eqref{eq: estim_operator E a} can be deduced. 
\end{proof}

The following result pertaining bubble functions is useful when addressing the local efficiency of the error estimator.

\begin{lem}{\cite[Lemma 3.3.]{verfurth}}\label{lem::bubble_function}
Let $B_T:= \Pi_{i=1}^{d+1} \lambda_i$ be the element--bubble function associated to $T\in \mc{T}_h$, where $\{\lambda_i\}_{i=1}^{d+1}$ are the barycentric coordinates of $T$, and $B_e:= \Pi_{\substack{i=1\\i\neq j}}^{d+1} \lambda_i$ be the face--bubble function associated to $e\subset \partial T$, where $\lambda_j$ vanishes on $e$. Then, the following estimates hold
    \begin{equation}\label{ineq:bubble functions}
        \begin{array}{rlcrlcrl}
         \|\mbf{v}\|^2_T \lesssim& (\mbf{v}, B_T\mbf{v})_T, &\quad& \|B_T\mbf{v}\|_T \lesssim& \|\mbf{v}\|_T, &\quad& \|B_T\mbf{v}\|_{1,T} \lesssim& h_T^{-1}\, \|\mbf{v}\|_T, \\
         \|\mbf{\mu}\|^2_e \lesssim& (\mbf{\mu}, B_e\mbf{\mu})_e, &\quad& \|B_e\mbf{\mu}\|_{\Delta_e} \lesssim& h_e^{1/2}\, \|\mbf{\mu}\|_e, &\quad& \|B_e\mbf{\mu}\|_{1,\Delta_e}, \lesssim& h_e^{-1/2}\, \|\mbf{\mu}\|_e, 
        \end{array}
    \end{equation}
for all $\mbf{v}\in [\md{P}_k(T)]^d, \ T\in \mc{T}_h$ and for each $\mbf{\mu}\in [\md{P}_k(e)]^d, \ e\in \mc{E}_h$.
\end{lem}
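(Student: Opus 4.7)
The plan is to reduce every estimate to the reference simplex $\widehat T$ via the usual affine pull-back, exploit the fact that on $\widehat T$ we are working on finite-dimensional polynomial spaces (so all natural norms are equivalent with dimension-free constants), and then transfer back to the physical element using the shape-regularity of $\mathcal{T}_h$. First, I would fix the reference $\widehat T$ with barycentric coordinates $\widehat\lambda_i$, set $\widehat B_{\widehat T}:=\prod_{i=1}^{d+1}\widehat\lambda_i$ and $\widehat B_{\widehat e}:=\prod_{i\neq j}\widehat\lambda_i$, and let $\Phi_T:\widehat T\to T$ be the affine map with Jacobian $B_T$ such that $|\det B_T|\sim h_T^d$ and $\|B_T^{-1}\|\sim h_T^{-1}$ (shape regularity). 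Pulling back, $B_T\circ\Phi_T=\widehat B_{\widehat T}$ and, for $\widehat e=\Phi_T^{-1}(e)$, $B_e\circ\Phi_T=\widehat B_{\widehat e}$ on the image of $T$.

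The second step is a norm-equivalence argument on the reference element. Because $\widehat B_{\widehat T}>0$ in the interior of $\widehat T$ and vanishes on $\partial \widehat T$, the bilinear form $(\widehat{\mbf v},\widehat{\mbf w})\mapsto(\widehat{\mbf v},\widehat B_{\widehat T}\widehat{\mbf w})_{\widehat T}$ is symmetric and positive definite on the finite-dimensional space $[\mathbb P_k(\widehat T)]^d$; hence $(\widehat{\mbf v},\widehat B_{\widehat T}\widehat{\mbf v})_{\widehat T}^{1/2}$ is a norm on this space, equivalent to $\|\widehat{\mbf v}\|_{\widehat T}$. Similarly $\widehat{\mbf v}\mapsto\|\widehat B_{\widehat T}\widehat{\mbf v}\|_{\widehat T}$ and $\widehat{\mbf v}\mapsto\|\widehat B_{\widehat T}\widehat{\mbf v}\|_{1,\widehat T}$ are norms on the same space and are therefore equivalent to $\|\widehat{\mbf v}\|_{\widehat T}$ as well, giving three equivalences with constants depending only on $k$ and $d$.

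Third, I would transfer these reference estimates back via standard scaling. Using $\|v\|_T^2=|\det B_T|\|\widehat v\|_{\widehat T}^2$ and $\|\nabla v\|_T^2\lesssim|\det B_T|\|B_T^{-1}\|^2\|\widehat\nabla\widehat v\|_{\widehat T}^2$ together with the reference equivalences, the three element-bubble bounds follow: the factor $h_T^{-1}$ in $\|B_T\mbf v\|_{1,T}\lesssim h_T^{-1}\|\mbf v\|_T$ is precisely $\|B_T^{-1}\|$. For the face estimates I work on the macro-patch $\Delta_e$, mapping each element $T\in\mathcal U_h(e)$ to $\widehat T$ and using the same equivalence applied to $\widehat B_{\widehat e}$; shape regularity yields $h_e\sim h_T$ for $T\in\mathcal U_h(e)$, so $\|\mu\|_e^2\sim h_e^{d-1}\|\widehat\mu\|_{\widehat e}^2$ and $\|B_e\mu\|_{\Delta_e}^2\sim h_e^d\|\widehat\mu\|_{\widehat e}^2$, whose ratio produces the advertised $h_e^{1/2}$; an analogous calculation for $\|\nabla(B_e\mu)\|_{\Delta_e}$ produces $h_e^{-1/2}$, and the coercivity $\|\mu\|_e^2\lesssim(\mu,B_e\mu)_e$ follows directly from the reference equivalence.

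The only slightly delicate point is the face case, because $\mu\in[\mathbb P_k(e)]^d$ lives on $e$ while $B_e$ is defined on the patch $\Delta_e$; strictly speaking one needs to interpret $B_e\mu$ as $B_e$ times the polynomial extension of $\mu$ obtained by making it constant along the direction normal to $e$ (or, equivalently, lifting $\widehat\mu$ to a polynomial on $\widehat T$ in the reference configuration). Once this extension is fixed and shown to be bounded in the reference $L^\infty$ norm by $\|\widehat\mu\|_{\widehat e}$ (which is immediate on finite-dimensional spaces), the preceding norm-equivalence and scaling arguments apply verbatim. All other steps are purely mechanical bookkeeping of powers of $h_T$ and $h_e$.
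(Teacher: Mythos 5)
Your argument is correct: the paper does not prove this lemma at all but simply cites it from Verf\"urth, and the proof given there is precisely the scaling-plus-norm-equivalence argument you describe (pull back to the reference simplex, use that $(\widehat{\mbf v},\widehat B\,\widehat{\mbf v})^{1/2}$ and $\|\widehat B\,\widehat{\mbf v}\|$ are norms equivalent to $\|\widehat{\mbf v}\|$ on a finite-dimensional polynomial space, then rescale, with the face case handled via a constant extension of $\mu$ in the normal direction). The only blemish is notational: you use $B_T$ both for the element bubble and for the Jacobian of $\Phi_T$, which should be disambiguated, but the mathematics is sound and the powers of $h_T$ and $h_e$ all come out as claimed.
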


\section{Cl\'ement and Oswald interpolants}\label{sec:cle-osw}
%
The following two interpolants are useful in the arguments leading to the reliability of the estimator. They allow to control the behavior of functions with piecewise $H^1$ regularity by representatives belonging to the global $H^1_0(\Omega)$ space. 

First, in the next lemma, we state the approximation properties of the Cl\'ement interpolation operator $\mc{C}_h: L^2(\compD) \to W_h^{1,c} \cap H_0^1(\Omega)$, introduced in \cite{clement} as
\[
    \mc{C}_h w := \sum_{z\in \mc{N}_h}   \left( \dfrac{1}{|\Omega_z|} \int_{\Omega_z} w \ dx \right) \phi_z
\]
where $\phi_z$ is the $\md{P}_1$ nodal basic functions associated to the interior vertex $z$, $\Omega_z = supp \ \phi_z$, and $W_h^{1,c}:= \{ w\in C(\Omega) : w|_T\in \mathbb{P}_1(T), T\in \mc{T}_h \}$.

\begin{lem}{\cite[Lemma 3.2]{verfurth}}\label{lem. Clement interpolator }
For any $T \in \mc{T}_h , \ e\in \mc{E}_h^i$ and $0\leq m\leq 1$, the following estimates hold, for all $w\in H_0^1(\Omega)$
\[
    \| \mc{C}_h w \|_{m,\Omega} \lesssim \|w\|_{m,\Omega}, \quad \|w-\mc{C}_h w\|_{0,T} \lesssim h_T \|w\|_{1,\Delta_T} , \quad \|w-\mc{C}_h w\|_{0,e} \lesssim h_e^{1/2} \|w\|_{1,\Delta_e},
\]
where $\Delta_T:= \{ T'\in \mc{T}_h : \overline{T} \cap \overline{T'} \neq \emptyset \} $ and $\Delta_e = \{T'\in \mc{T}_h: \overline{T'} \cap \overline{e} \neq \emptyset \}$.
\end{lem}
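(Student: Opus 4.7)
Since this is a classical result quoted from \cite{verfurth}, my plan is to follow the standard Bramble--Hilbert--plus--scaling strategy for quasi-interpolants of Cl\'ement type, rather than attempting a new approach. All three estimates hinge on two structural facts about $\mc{C}_h$: (i) the nodal coefficients $c_z(w):=|\Omega_z|^{-1}\int_{\Omega_z}w$ are bounded linear functionals on $L^2(\Omega_z)$, and (ii) $\mc{C}_h$ reproduces constants, because $\sum_{z}\phi_z\equiv 1$ on $\Omega$ and $c_z(1)=1$, and each vertex patch $\Omega_z$ is contained in the union of finitely many (shape-regular) elements.

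First I would establish the global stability bound $\|\mc{C}_h w\|_{m,\Omega}\lesssim \|w\|_{m,\Omega}$ for $m=0,1$. On each $T\in\mc{T}_h$, write $\mc{C}_h w|_T=\sum_{z\in \mc{N}_h\cap \overline T}c_z(w)\phi_z$; Cauchy--Schwarz gives $|c_z(w)|\lesssim |\Omega_z|^{-1/2}\|w\|_{0,\Omega_z}$, while shape-regularity implies $\|\phi_z\|_{m,T}\lesssim h_T^{d/2-m}$ and that only $\mathcal O(1)$ vertices $z$ contribute to each $T$. Squaring and summing over $T$, and using the bounded overlap of the patches $\{\Omega_z\}$, yields the stability estimate.

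Next I would prove the local $L^2$ approximation estimate $\|w-\mc{C}_h w\|_{0,T}\lesssim h_T\|w\|_{1,\Delta_T}$. Using constant reproduction, for any constant $c$,
\[
\|w-\mc{C}_h w\|_{0,T}=\|(w-c)-\mc{C}_h(w-c)\|_{0,T}\leq \|w-c\|_{0,T}+\|\mc{C}_h(w-c)\|_{0,T}\lesssim \|w-c\|_{0,\Delta_T},
\]
where the last step uses stability localized to $\Delta_T$ (since $\mc{C}_h(w-c)|_T$ depends only on values of $w-c$ in patches touching $T$, all of which lie in $\Delta_T$). Choosing $c$ to be the mean of $w$ over $\Delta_T$ and applying a Poincar\'e--Wirtinger inequality scaled to the macro-element $\Delta_T$ (whose diameter is comparable to $h_T$ by shape-regularity) gives $\|w-c\|_{0,\Delta_T}\lesssim h_T|w|_{1,\Delta_T}\leq h_T\|w\|_{1,\Delta_T}$.

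Finally, for the face estimate, I would pick $T$ adjacent to $e$ and apply the multiplicative trace inequality
\[
\|v\|_{0,e}^2\lesssim h_e^{-1}\|v\|_{0,T}^2+h_e\,|v|_{1,T}^2
\]
to $v=w-\mc{C}_h w$. The first term is controlled by the element estimate just proven, and the second by an analogous argument in the $H^1$-seminorm: again using constant reproduction and the $H^1$ stability of $\mc{C}_h$, $|w-\mc{C}_h w|_{1,T}\lesssim \|w\|_{1,\Delta_T}$. Combining gives $\|w-\mc{C}_h w\|_{0,e}\lesssim h_e^{1/2}\|w\|_{1,\Delta_e}$. The main technical obstacle I anticipate is carefully justifying the $H^1$-stability on each $T$, because $|\phi_z|_{1,T}\sim h_T^{-1}$, so the bound $|c_z(w)|\lesssim |\Omega_z|^{-1/2}\|w\|_{0,\Omega_z}$ is insufficient by itself; instead one must rewrite $c_z(w)-c_{z'}(w)$ for neighboring vertices as a difference of averages controlled by $\|\nabla w\|_{0,\Omega_z\cup\Omega_{z'}}$, which removes the $h_T^{-1}$ factor and yields the correct scaling.
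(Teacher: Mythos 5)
The paper does not actually prove this lemma: it is imported verbatim as \cite[Lemma 3.2]{verfurth}, so there is no in-paper argument to compare yours against. Your sketch follows the standard Cl\'ement/Verf\"urth route (bounded nodal functionals $c_z$, constant reproduction plus a scaled Poincar\'e--Wirtinger inequality on the macro-element $\Delta_T$, and a multiplicative trace inequality for the face bound), and you correctly flag the genuine subtlety in the $H^1$-stability, namely that $|\phi_z|_{1,T}\sim h_T^{-1}$ forces one to work with differences $c_z(w)-c_{z'}(w)$ of neighboring averages, using $\sum_z\nabla\phi_z=0$, to recover the right scaling. That part is sound and is essentially the textbook proof.

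There is, however, one concrete gap. The operator as defined in the paper sums only over \emph{interior} vertices, $\mc{C}_h w=\sum_{z\in\mc{N}_h}c_z(w)\phi_z$ with $\mc{C}_h w\in H_0^1(\Omega)$. Consequently $\sum_{z\in\mc{N}_h}\phi_z\not\equiv 1$ on elements touching $\partial\Omega$, so your claim that $\mc{C}_h$ reproduces constants ``on $\Omega$'' fails precisely there, and the step
\[
\|w-\mc{C}_h w\|_{0,T}=\|(w-c)-\mc{C}_h(w-c)\|_{0,T}
\]
is not valid for boundary elements $T$. The standard repair---and the reason the lemma is stated for $w\in H_0^1(\Omega)$ rather than all of $H^1(\Omega)$---is that on a patch $\Delta_T$ meeting $\partial\Omega$ one does not subtract a constant at all: since $w$ vanishes on a portion of $\partial\Delta_T$ of surface measure comparable to $h_T^{d-1}$, a scaled Friedrichs inequality gives $\|w\|_{0,\Delta_T}\lesssim h_T|w|_{1,\Delta_T}$ directly, and the $L^2$-stability of $\mc{C}_h$ (which does \emph{not} require constant reproduction) finishes the estimate on such elements. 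You should split the elements into interior and boundary-adjacent ones and run the two arguments separately; with that amendment the proof is complete.
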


The next results shows that an element $w$ of $W_h^*$ can be approximated by a continuous function $\wtil{w}\in W_h^*$, sometimes referred to as  \textit{Oswald interpolant}, and that the approximation error can be controlled by the size of the inter-element jumps of $w$. 

\begin{lem}{\cite[Theorem 2.2.]{KP}}\label{lem: Oswald Interpolation}
For any $w_h\in W_h^*$ and any multi-index with $|\alpha|=0,1$, the following approximation results holds: Let $u_D$ be the restriction to $\Gamma_h$ of a function in $W_h^*\cap H^1(\compD)$. then there exists a function $\wtil{w}_h\in W_h^*\cap H^1(\compD)$ satisfying $\wtil{w}_h|_{\Gamma} = u_D$, and
\[
	\sum_{T\in \mc{T}_h} \|D^{\alpha} (w_h - \wtil{w}_h)\|^2_T \leq C_O \left(\sum_{e\in \mc{E}_h^\circ} h_e^{1-2|\alpha|} \|\jump{w_h}\|^2_e  + \sum_{e\in \mc{E}_h^{\partial}} h_e^{1-2|\alpha|} \|u_D-w_h\|^2_e\right),
\]		
above, $C_O$ is a positive constant independent of the mesh size.
\end{lem}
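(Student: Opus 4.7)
The strategy is to construct $\wtil{w}_h$ explicitly by nodal averaging and then to reduce the bound to a local polynomial computation via standard norm equivalences. Fix a Lagrange basis $\{\phi_{\xi_j}\}$ for $W_h^*$ so that each node $\xi_j$ lies either in the interior of an element, on an interior face, or on a boundary face. Define $\wtil{w}_h$ by prescribing its nodal values: for an interior node shared by the set $\omega_{\xi_j}$ of adjacent elements take
\[
\wtil{w}_h(\xi_j) := \frac{1}{|\omega_{\xi_j}|}\sum_{T\in \omega_{\xi_j}} w_h|_T(\xi_j),
\]
and for each node $\xi_j\in \Gamma_h$ set $\wtil{w}_h(\xi_j) := u_D(\xi_j)$. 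By construction the resulting piecewise polynomial of degree $k+1$ is continuous across interior faces (hence belongs to $H^1(\compD)$) and matches $u_D$ on $\Gamma_h$.

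To estimate the error I plan to combine three standard ingredients. First, because $(w_h - \wtil{w}_h)|_T\in \md{P}_{k+1}(T)$, a scaling argument on the reference element yields
\[
\|D^\alpha(w_h - \wtil{w}_h)\|_T^2 \lesssim h_T^{d-2|\alpha|}\sum_{\xi_j\in T} \left|(w_h|_T - \wtil{w}_h)(\xi_j)\right|^2,
\]
with constants depending only on $k$ and on the shape-regularity parameter $\beta$. Second, at an interior node the nodal discrepancy rearranges as $\tfrac{1}{|\omega_{\xi_j}|}\sum_{T'\in \omega_{\xi_j}} (w_h|_T(\xi_j) - w_h|_{T'}(\xi_j))$, a finite combination of pointwise jumps of $w_h$ along faces connecting $T$ to its neighbours in $\omega_{\xi_j}$. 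Third, since $\jump{w_h}|_e \in \md{P}_{k+1}(e)$, the polynomial inverse estimate $|p(\xi_j)|^2\lesssim h_e^{-(d-1)}\|p\|_e^2$ converts each pointwise jump into a full $L^2(e)$ control. Boundary nodes are handled identically after replacing $\jump{w_h}$ by $u_D - w_h$ on the corresponding boundary face.

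Combining these three estimates, each interior face $e$ contributes with weight $h_T^{d-2|\alpha|}\cdot h_e^{-(d-1)} \simeq h_e^{1-2|\alpha|}$ thanks to shape regularity, which is exactly the factor appearing in the right-hand side of the claim. Summing over all elements and over all local nodes, and using that every face is shared by a uniformly bounded number of elements, produces the desired bound with a constant $C_O$ depending only on $\beta$, $k$, and the spatial dimension $d$. The principal obstacle is combinatorial: one must carefully account for the multiplicities with which each face appears in the double sum over elements and local nodes, and show that these multiplicities, along with the sizes $|\omega_{\xi_j}|$, remain bounded uniformly in $h$, which is precisely what shape regularity guarantees.
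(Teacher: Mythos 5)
The paper does not actually prove this lemma: it is imported verbatim from \cite[Theorem 2.2]{KP}, so there is no internal proof to compare against. Your sketch reconstructs the standard argument behind that citation --- nodal averaging, the norm equivalence between $\|D^\alpha p\|_T$ and the nodal values of $p\in\md{P}_{k+1}(T)$ obtained by scaling to the reference element, and face-wise inverse estimates --- and the exponent bookkeeping is right: $h_T^{d-2|\alpha|}\,h_e^{-(d-1)}\simeq h_e^{1-2|\alpha|}$ under shape regularity. The one step you should make explicit is the telescoping (chain) argument: two elements $T,T'\in\omega_{\xi_j}$ sharing only the node $\xi_j$ need not share a face, so $w_h|_T(\xi_j)-w_h|_{T'}(\xi_j)$ is not itself a jump across a single face; it must be written as a telescoping sum over a chain $T=T_0,T_1,\dots,T_m=T'$ of elements in $\omega_{\xi_j}$ in which consecutive elements share a face containing $\xi_j$, and one needs the patch to be face-connected with a uniformly bounded chain length --- this, rather than just bounded patch cardinality, is what shape regularity and mesh conformity supply. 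The same remark applies at boundary nodes, where for an element of $\omega_{\xi_j}$ with no boundary face the chain must terminate at a boundary face carrying the $u_D-w_h$ term. With that detail spelled out, your argument is a correct and complete proof of the quoted estimate.
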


\bibliography{biblio}

\begin{thebibliography}{10}

\bibitem{AsNaNa2019}
D.~Adak, S.~Natarajan, and E.~Natarajan.
\newblock Virtual element method for semilinear elliptic problems on polygonal
  meshes.
\newblock {\em Applied Numerical Mathematics}, 145:175--187, 2019.

\bibitem{Am2019}
M.~Amrein.
\newblock Adaptive fixed point iterations for semilinear elliptic partial
  differential equations.
\newblock {\em Calcolo}, 56(30), 2019.

\bibitem{AmWi2015}
M.~Amrein and T.~P. Wihler.
\newblock Fully adaptive {N}ewton-{G}alerkin methods for semilinear elliptic
  partial differential equations.
\newblock {\em SIAM Journal on Scientific Computing}, 37(4):A1637--A1657, 2015.

\bibitem{BrSc2008}
S.~C. Brenner and L.~R. Scott.
\newblock {\em The mathematical theory of finite element methods}, volume~15 of
  {\em Texts in Applied Mathematics}.
\newblock Springer, New York, third edition, 2008.

\bibitem{clement}
P.~Cl\'ement.
\newblock Approximation by finite element functions using local regularization.
\newblock {\em ESAIM: Mathematical Modelling and Numerical Analysis {-}
  Mod\'elisation Math\'ematique et Analyse Num\'erique}, 9({\rm R}-2):77--84,
  1975.

\bibitem{Cockburn2010}
B.~Cockburn.
\newblock The {H}ybridizable discontinuous {G}alerkin methods.
\newblock In {\em Proceedings of the International Congress of
  Mathematicians.}, volume~4, pages 2749--2775, India, 2010. Hyderabad.

\bibitem{CoGoSa2010}
B.~Cockburn, J.~Gopalakrishnan, and F.~Sayas.
\newblock A projection{-}based error analysis of {HDG} methods.
\newblock {\em Math. Comp.}, 79(271):1351--1367, 2010.

\bibitem{CoReGu2009}
B.~Cockburn, D.~Gupta, and F.~Reitich.
\newblock Boundary{-}conforming discontinuous {G}alerkin methods via extensions
  from subdomains.
\newblock {\em Journal of Scientific Computing}, 42(1):144--184, Aug 2009.

\bibitem{CoQiuSo2014}
B.~Cockburn, W.~Qiu, and M.~Solano.
\newblock A priori error analysis for {HDG} methods using extensions from
  subdomains to achieve boundary conformity.
\newblock {\em Mathematics of computation}, 83(286):665--699, 2014.

\bibitem{CoSiZh}
B.~Cockburn, J.~Singler, and Y.~Zhang.
\newblock Interpolatory {HDG} method for parabolic semilinear {PDE}s.
\newblock {\em Journal of Scientific Computing}, (79):1777--1800, 2019.

\bibitem{CoSo2012}
B.~Cockburn and M.~Solano.
\newblock Solving {D}irichlet boundary-value problems on curved domains by
  extensions from subdomains.
\newblock {\em SIAM Journal on Scientific Computing}, 34(1):A497--A519, 2012.

\bibitem{aposteriori1}
B.~Cockburn and W.~Zhang.
\newblock A posteriori error estimates for {HDG} methods.
\newblock {\em J. Sci. Comput.}, 51(3):582--607, 2012.

\bibitem{aposteriori2}
B.~Cockburn and W.~Zhang.
\newblock A posteriori error analysis for hybridizable discontinuous {G}alerkin
  methods for second order elliptic problems.
\newblock {\em SIAM J. Numer. Anal.}, 51(1):676--693, 2013.

\bibitem{CoZh2014}
B.~Cockburn and W.~Zhang.
\newblock An a posteriori error estimate for the variable{-}degree
  {R}aviart{-}{T}homas method.
\newblock {\em Math. Comp.}, 83(287):1063--1082, 2014.

\bibitem{Gatica:2014}
G.~N. Gatica.
\newblock {\em A simple introduction to the mixed finite element method: theory
  and applications}.
\newblock Springer Briefs in Mathematics. Springer, Heidelberg, 2014.

\bibitem{GrRu1958}
H.~Grad and H.~Rubin.
\newblock Hydromagnetic equilibria and force{-}free fields.
\newblock In {\em Proc. Second international conference on the peaceful uses of
  atomic energy, Geneva}, volume 31,190, pages 190--197, New York, Oct 1958.
  United Nations.

\bibitem{HaLa1987}
E.~M. Harrell and W.~J. Layton.
\newblock L2 estimates for {G}alerkin methods for semilinear elliptic
  equations.
\newblock {\em SIAM Journal on Numerical Analysis}, 24(1):52--58, 1987.

\bibitem{PaWi2020}
P.~Heid and T.~P. Wihler.
\newblock Adaptive iterative linearization {G}alerkin methods for nonlinear
  problems.
\newblock {\em Mathematics of Computation}, 89(326):2707--2734, July 2020.

\bibitem{HoWi2020}
P.~Houston and T.~P. Wihler.
\newblock An $hp$-adaptive {N}ewton-discontinuous-{G}alerkin finite element
  approach for semilinear elliptic boundary value problems.
\newblock {\em Mathematics of Computation}, (87), 2018.

\bibitem{KP}
O.~A. Karakashian and F.~Pascal.
\newblock A posteriori error estimates for a discontinuous {G}alerkin
  approximation of second-order elliptic problems.
\newblock {\em SIAM J. Numer. Anal.}, 41(6):2374--2399, 2003.

\bibitem{SaSo2018}
T.~S\'anchez-Vizuet and M.~E. Solano.
\newblock A {H}ybridizable discontinuous {G}alerkin solver for the
  {G}rad-{S}hafranov equation.
\newblock {\em Computer Physics Communications}, 235:120--132, Feb 2019.

\bibitem{SaCeSo2019}
T.~S\'anchez-Vizuet, M.~E. Solano, and A.~J. Cerfon.
\newblock Adaptive hybridizable discontinuous {G}alerkin discretization of the
  {G}rad--{S}hafranov equation by extension from polygonal subdomains.
\newblock {\em Computer Physics Communications}, 255:107239, 2020.

\bibitem{Shafranov1958}
V.~D. Shafranov.
\newblock On magneto hydrodynamical equilibrium configurations.
\newblock {\em Soviet Physics JETP}, 6(33):545--554, 1958.

\bibitem{verfurth}
R.~Verf{\"u}rth.
\newblock A posteriori error estimators for convection-diffusion equations.
\newblock {\em Numer. Math.}, 80:641--663, 1998.

\bibitem{XiCh2005}
Z.~Xie and C.~Chen.
\newblock The interpolated coefficient {FEM} and its application in computing
  the multiple solutions of semilinear elliptic problems.
\newblock {\em International Journal of Numerical Analysis and Modeling},
  2(1):97--106, 2005.

\bibitem{Xu1994}
J.~Xu.
\newblock A novel two-grid method for semilinear elliptic equations.
\newblock {\em SIAM Journal on Scientific Computing}, 15(1):231--237, 1994.

\bibitem{Xu1996}
J.~Xu.
\newblock Two-grid discretization techniques for linear and nonlinear {PDE}s.
\newblock {\em SIAM Journal on Numerical Analysis}, 33(5):1759--1777, 1996.

\bibitem{ZhZhJi}
J.~Zhan, L.~Zhong, and J.~Peng.
\newblock Discontinuous {G}alerkin methods for semilinear elliptic boundary
  value problem, 2021.
\newblock arXiv: 2101.10664.

\end{thebibliography}
\bibliographystyle{abbrv}

\end{document}